\documentclass[11pt,a4paper,reqno]{amsart} 
\usepackage[utf8]{inputenc} 
\usepackage[top=3.2cm, bottom=3.6cm, left=2.6cm, right=2.6cm]{geometry}
\usepackage{amsmath,amsthm,amsfonts,amstext,amssymb}
\usepackage{enumerate}
\usepackage{hyperref}
\usepackage{graphicx}
\usepackage{xcolor}
\hypersetup{
    colorlinks,
    linkcolor={blue!50!black},
    citecolor={blue!50!black},
    urlcolor={blue!80!black}
}

\usepackage{etoolbox}
\patchcmd{\section}{\scshape}{\bfseries\large}{}{}
\patchcmd{\subsubsection}{\itshape}{}{}{}
\makeatletter
\def\@seccntformat#1{\csname the#1\endcsname.\space}
\makeatother
\usepackage{tikz}
\usepackage[mode=tex]{standalone}
\usepackage{subcaption}

\newcommand{\treesubfigure}[3]{%
    \begin{subfigure}[t]{.23\textwidth}
        \vspace{0pt}
        \centering
        \parbox[c][2cm][c]{\linewidth}{%
            \centering
            \includegraphics[scale=.7]{#1}%
        }
        \caption{#2}
        \label{#3}
    \end{subfigure}%
}

\newtheorem{theorem}{Theorem}[section]
\newtheorem{corollary}[theorem]{Corollary}
\newtheorem{lemma}[theorem]{Lemma}
\newtheorem{proposition}[theorem]{Proposition}
\newtheorem{question}[theorem]{Question}

\theoremstyle{definition}

\numberwithin{equation}{section}

\usepackage{booktabs}
\usepackage{mathtools}
\newcommand{\E}{\mathbb{E}}
\newcommand{\N}{\mathbb{N}}
\renewcommand{\P}{\mathbb{P}}

\newcommand{\R}{\mathbb{R}}

\newcommand{\limn}{\lim\limits_{n \rightarrow \infty}}
\newcommand{\nconv}{\xrightarrow{n \to \infty}}
\newcommand{\Var}{\ensuremath{\mathrm{Var}}}
\newcommand{\Li}{\operatorname{Li}_2}
\newcommand{\IP}{\operatorname{(IP)}}

\newcommand{\ERG}{Erdős--R\'enyi}

\title{Fixed forests in the minimum spanning tree and cubic volume growth}
\author[L. Makowiec]{Luca Makowiec}
\address{University of Leipzig\\
Department of Mathematics\\
Augustusplatz 10, 04109 Leipzig, Germany.}
\email{luca.makowiec@uni-leipzig.de}
\date{\today}

\begin{document}

\keywords{minimum spanning tree, random graphs, partitions}
\subjclass[2020]{Primary: 60C05. Secondary: 05C80, 60K35, 05C05.}

\begin{abstract}
Let $M_n$ be the minimum spanning tree of the complete graph $K_n$ with i.i.d.\ uniform edge weights. For a fixed forest $F$ with connected components $T_1, \ldots, T_d$, we show that there exists a function $\Psi$ on finite trees such that
\begin{equation*}
    n^{|E(F)|} \P_n(F \subseteq M_n) \longrightarrow \prod_{i=1}^d \Psi(T_i).
\end{equation*}
We give a recursive description of $\Psi$ and calculate it explicitly for several small trees. For the star $S_k$ and the path $P_k$, we prove that $\Psi(S_k) \sim \zeta(2)^k$ and $\Psi(P_k) \sim k^2/12$, respectively. We also show that the expected size of a ball of radius $r$ is asymptotic to $r^3/36$, and give exponential tail bounds.
\end{abstract}

\maketitle

%\tableofcontents

\section{Introduction}

Let $K_n$ be the complete graph on $[n]$, and assign i.i.d.\ continuous weights $w = (w_e)_{e \in E(K_n)}$ to its edges. Since the minimum spanning tree depends only on the ordering of the edge weights, its law is the same for every continuous weight distribution. We therefore assume throughout that the weights are uniform on $[0,1]$. The minimum spanning tree (MST) $M_n$ is the almost surely unique spanning tree satisfying
\begin{equation*}
    M_n = \arg\min_T \sum_{e \in E(T)} w_e,
\end{equation*}
where the minimum is taken over all spanning trees of $K_n$.

Exact probabilities for the entire tree $M_n$ are generally difficult to compute. Instead, we study the limiting inclusion probabilities of fixed finite forests. More precisely, for a fixed forest $F$, we consider
\begin{equation*}
    \limn n^{|E(F)|}\P_n(F \subseteq M_n).
\end{equation*}
Our main result shows that this limit factorizes over the connected components of $F$. The limiting contribution of each component is described by a function $\Psi$ on finite trees, for which we later give a recursive representation. As an application, we prove cubic volume growth: the limiting expected size of a radius-$r$ ball in $M_n$ is asymptotic to $r^3/36$ as $r \to \infty$.

%%%%%%%%%%%%%%%%%%%%%%%%%%
\subsection{Results} \label{SS:results}

For a fixed tree $T$, let $N_n(T)$ denote the number of injective embeddings of $T$ into $M_n$. We refer to Section~\ref{SS:notation} for a formal introduction to the notation used throughout. Our main result is the following.
\begin{theorem} \label{T:forest_formula}
    Let $F$ be a fixed forest with $m = |E(F)|$ and connected components $T_1, \ldots, T_d$. There exists a function $\Psi$ on the set of finite trees such that
    \begin{equation} \label{eq:forest_formula}
        \limn n^m \P_n(F \subseteq M_n) = \prod_{i=1}^d \Psi(T_i).
    \end{equation}
    Furthermore, for every fixed tree $T$ and every $0 < p < \infty$,
    \begin{equation} \label{eq:tree_count}
        \frac{N_n(T)}{n} \xrightarrow{L^p} \Psi(T).
    \end{equation}
\end{theorem}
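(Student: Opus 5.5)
The plan is to reduce everything to the local weak limit of $M_n$ and to a second-moment (in fact $p$-th moment) concentration for embedding counts. Write $\mathcal{M}$ for the local limit of the MST, the minimum spanning forest of the Poisson weighted infinite tree (PWIT) rooted at $\varnothing$ (Aldous--Steele). The first step fixes the meaning of $\Psi$ for a tree $T$ with a distinguished root $v_0$:
\[
\Psi(T) := \frac{1}{|\mathrm{Aut}(T)|}\cdot \E\bigl[\#\{\text{embeddings } \phi: T\to \mathcal{M},\ \phi(v_0)=\varnothing\}\bigr]\cdot \frac{|\mathrm{Aut}(T)|}{1}
\]
More simply, we take $\Psi(T)$ to be the expected number of rooted injective embeddings of $T$ into $\mathcal M$ sending $v_0$ to the root. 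We will check that this is independent of $v_0$ by unimodularity (mass transport). By exchangeability of $K_n$, $\E N_n(T) = n\,\E[\#\text{embeddings of } T \text{ into } M_n \text{ sending } v_0 \mapsto 1]$. Moreover $n^{m}\P_n(T\subseteq M_n)$, for a labelled copy of $T$ on fixed vertices, equals $\E N_n(T)\cdot n^{m}/(n)_{m+1}$, which is asymptotically $\E N_n(T)/n$. The connected case of \eqref{eq:forest_formula} is therefore the first-moment part of \eqref{eq:tree_count}.

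The second step is convergence of $\E N_n(T)/n$. Local convergence gives convergence in distribution of the rooted count $X_n$ (embeddings of $T$ at vertex $1$) to $X$, the count in $\mathcal M$. To upgrade this to convergence of moments we need uniform integrability. Here $X_n \le \Delta_n(r)^{m}$, where $\Delta_n(r)$ is the size of the radius-$m$ ball around $1$ in $M_n$. Since $M_n$ is a subgraph of $K_n$ with the edges chosen greedily, the ball is dominated by the component of $1$ in $G(n,p)$ at $p=c/n$ together with a small number of later connecting edges. For uniform integrability I would instead use the standard fact that MST degrees have exponential tails and that balls in $M_n$ of fixed radius have all moments bounded uniformly in $n$. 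This follows by exploring the MST from vertex $1$ via Prim's invasion, where each step reveals at most a stochastically dominated (Exp-type tail) number of children. With this, $\E X_n^p \to \E X^p$ for all $p$.

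The third step handles forests and concentration. For a forest with components $T_1,\dots,T_d$, the count of embeddings of $F$ into $M_n$ is $\prod_i N_n(T_i)$ minus non-injective overlaps, and the overlaps contribute $O(n^{d-1})$ in expectation by the same moment bounds. So it suffices to show $\E\prod_i N_n(T_i)/n^d \to \prod_i \Psi(T_i)$. This is an asymptotic independence statement for neighbourhoods of $d$ independent uniform vertices: the joint local limit of $M_n$ seen from $d$ uniform roots is $d$ independent copies of $\mathcal M$. This is the standard two-root argument, since two uniform vertices are at distance $\to\infty$ and the Prim explorations from them are asymptotically independent. Combined with uniform integrability, it yields \eqref{eq:forest_formula}. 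Taking $F$ as $p$ disjoint copies of $T$ (and $p=2$ in particular) gives $\E (N_n(T)/n)^2 \to \Psi(T)^2$, hence $L^2$ convergence. $L^p$ convergence for every $p$ then follows from convergence in probability plus uniform moment bounds on $N_n(T)/n$, which come from the $2p$-fold product moments.

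The main obstacle is the asymptotic independence of the local structures around several uniform roots in the MST. The MST is a global object, so a vertex's neighbourhood depends on whether distant clusters connect with small weights. I would try first to localize via the invasion percolation description: the MST neighbourhood of radius $m$ around a vertex is determined, with high probability, by an exploration of $O(1)$ size plus the event that the invaded cluster reaches the giant component, i.e.\ the "weight threshold" where the local cluster joins the big one. Explorations from distinct roots use disjoint edge sets with high probability, and the giant component behaves identically for each. Making that coupling quantitative is where I expect most of the work.
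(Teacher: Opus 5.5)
You have a sound skeleton: define $\Psi(T)$ as the expected number of rooted embeddings of $T$ into $\mathcal{M}$, pass to $\E_n[N_n(T)]/n$ by exchangeability, get forests from product moments and $L^p$ from $p$ disjoint copies. However, both inputs that carry the weight are asserted rather than proved.

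\textbf{Uniform integrability.} Uniform-in-$n$ moment bounds for $|B_{M_n}(1,r)|$ are not a standard fact. The paper points out that the local convergence \eqref{eq:local_conv} does not yield the theorem even for a single tree. Tang and Zhang asked whether even $\E_n[\deg_{M_n}(1)^2]$ converges to the corresponding moment in $\mathcal{M}$, which your claim would settle at once.

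Your Prim sketch does not supply these bounds, because the MST edges at vertex $1$ are not revealed by any bounded number of invasion steps. An edge $\{1,v\}$ of weight $s/n$ is accepted exactly when $1$ and $v$ are not yet joined through lighter edges. That is a property of the whole graph $G_n(s/n)$, and it may be decided only after the invasion has swallowed the giant component.

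What works is a uniform connection-tail bound. Let $R_n^T(u,v)$ be the first rescaled time at which $u,v$ are connected in $G_n(t/n)\setminus E(T)$. Then $\P_n(R_n^T(u,v)>y)\le C_m e^{-y/16}$. Indeed, the graph on the vertices outside $T$ has a component containing at least $3/4$ of them except with probability $e^{-y/16}$ (union bound over cuts). Moreover, $u$ and $v$ both have an open edge into it except with probability $2(1-y/n)^{n/2}$.

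Since $T\subseteq M_n$ forces $nw_e<R_n^T(e^-,e^+)$ for every $e\in E(T)$, H\"older gives the integrable bound $C_m\exp(-\sum_e x_e/(16m))$ on the conditional inclusion probability given $nw_e=x_e$. Hence $\sup_n n^{m}\P_n(T\subseteq M_n)<\infty$ for every fixed tree. Uniform bounds on all factorial moments of balls then follow through $\E_n[(|B_{M_n}(1,r)|)_q]=\sum_T (n-1)_{|E(T)|}\P_n(T\subseteq M_n)$.

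\textbf{The forest step.} This is the more serious gap. Your argument needs the local limit seen from $d$ uniform roots to be $d$ independent copies of $\mathcal{M}$, and you leave this as the main obstacle with only a heuristic coupling. Large mutual distance is not enough, since all roots interact with the same giant component. You would at least have to show that the times at which the roots' clusters join the giant are asymptotically independent; this is the content of the paper's partition lemma, via exchangeability and concentration of $|\mathcal{C}_1^n(t/n)|/n$. You would then have to control the full radius-$m$ neighbourhoods jointly.

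The paper avoids this entirely by proving concentration directly with Efron--Stein:
\begin{itemize}
\item Resampling one edge weight changes $M_n$ with probability at most $2\P_n(e\in M_n)=4/n$, and then only by a single edge swap.
\item So $|N_n(T)-N_n^{(e)}(T)|\le 4mD^{m-1}$ when the maximal degrees are at most $D$.
\item Taking $D=n^{\delta}$, and using $\P_n(\Delta_n>D)\le C_k nD^{-k}$ from the degree moments, gives $\Var(N_n(T))=o(n^2)$.
\end{itemize}
Together with uniform moment bounds (e.g.\ from $N_n(T)\le(m+1)\sum_v\deg_{M_n}(v)^m$), this gives $N_n(T)/n\to\Psi(T)$ in every $L^p$. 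Then $n^{-d}\E_n[\prod_i N_n(T_i)]\to\prod_i\Psi(T_i)$ is immediate, because each factor converges in $L^p$ to a constant. Overlaps are handled as you describe.

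So your argument runs in the opposite direction to the paper's: you derive concentration from the forest formula, whereas the paper derives the forest formula from concentration. The single-edge-resampling variance bound is the missing idea. Once the tail bound is supplied, your single-tree route through the local limit is a legitimate alternative to the paper's \ERG{} partition argument. It identifies $\Psi$ only implicitly, however, rather than as $\E_A[\prod_e\tau_e(T)]$.
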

\noindent We give a first description of $\Psi$ in Section~\ref{SS:Psi} and a recursive description in Section~\ref{S:compute_Psi}. In Figure~\ref{fig:trees_up_to_five_edges} we give values for $\Psi$ on trees up to $5$ edges. We list several corollaries that follow from Theorem~\ref{T:forest_formula}. 

Formula \eqref{eq:forest_formula} immediately implies that fixed trees on disjoint vertex sets are asymptotically independent.
\begin{corollary}
    If $F$ has connected components $T_1, \ldots, T_d$, then
    \begin{equation*}
        \P_n(F \subseteq M_n) = (1 + o_{|E(F)|}(1)) \prod_{i=1}^d \P_n(T_i \subseteq M_n).
    \end{equation*}
\end{corollary}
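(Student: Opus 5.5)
The plan is to apply Theorem~\ref{T:forest_formula} twice: once to the whole forest $F$, and once to each component $T_i$ on its own. We then divide the two asymptotic statements. We need the components to be placed on a fixed labeled vertex set of $K_n$. Since the law of $M_n$ is invariant under permutations of $[n]$, $\P_n(F\subseteq M_n)$ depends only on the isomorphism type of $F$, so no generality is lost.

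\textbf{Step 1: the whole forest.} Write $m_i=|E(T_i)|$, so that $m=\sum_{i=1}^d m_i$. By \eqref{eq:forest_formula},
\begin{equation*}
    n^{m}\P_n(F\subseteq M_n)\longrightarrow \prod_{i=1}^d \Psi(T_i).
\end{equation*}

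\textbf{Step 2: each component.} Each $T_i$ is itself a forest with a single component. Applying \eqref{eq:forest_formula} with $d=1$ gives $n^{m_i}\P_n(T_i\subseteq M_n)\to\Psi(T_i)$. Taking the product over $i$,
\begin{equation*}
    n^{m}\prod_{i=1}^d\P_n(T_i\subseteq M_n)\longrightarrow\prod_{i=1}^d\Psi(T_i).
\end{equation*}

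\textbf{Step 3: the ratio.} The main point to check is that $\prod_{i}\Psi(T_i)\in(0,\infty)$; otherwise the ratio of the two sequences is not controlled. Finiteness is built into the statement of Theorem~\ref{T:forest_formula}, since the limit exists as a real number.

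For positivity I would use a direct lower bound on $\P_n(T\subseteq M_n)$ for a fixed tree $T$ with $k$ edges. Consider the event that every edge of $T$ has weight below $1/n$ and every other edge incident to $V(T)$ has weight above $2/n$, say. On this event, the cut/cycle characterization of the MST shows that each edge of $T$ is the lightest edge leaving a suitable vertex set, hence lies in $M_n$. This event has probability of order $c^{k} n^{-k}$ with $c>0$ independent of $n$ (roughly $(1/n)^k e^{-2|V(T)|}$), so $\Psi(T)>0$. Alternatively, positivity may follow from \eqref{eq:tree_count}, if the paper's description of $\Psi$ in Section~\ref{SS:Psi} shows it is strictly positive.

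With both limits equal to the same number $L\in(0,\infty)$, their ratio tends to $1$:
\begin{equation*}
    \frac{\P_n(F\subseteq M_n)}{\prod_{i=1}^d\P_n(T_i\subseteq M_n)}\longrightarrow 1 .
\end{equation*}
This is exactly the claimed statement with an $o(1)$ term. The error depends only on $F$, and through $F$ on $|E(F)|$, since there are only finitely many forests with a given number of edges and no isolated vertices. Isolated vertices do not affect either side, because the events concern edges only.
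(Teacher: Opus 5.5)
Your proof is correct and matches the paper's (one-line) argument: apply \eqref{eq:forest_formula} to $F$ and to each $T_i$ separately and divide, which needs $\Psi(T_i)>0$. Your cut-property lower bound does give this positivity, but it also follows directly from the definition \eqref{eq:def_Psi}: since $A_v>1$ almost surely, every $\tau_e(T)>1$, so $\Psi(T)\geq 1$.
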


Let $S_k$ and $P_k$ denote the star and the path with $k$ edges, respectively. We have
\begin{equation} \label{eq:ball_as_prob}
    \E_n\bigl[|B_{M_n}(1,r)|\bigr] = \sum_{k=1}^r (n - 1)_k \P_n(P_k \subseteq M_n),
\end{equation}
so that the limit of expected ball sizes can be written as a sum of $\Psi$ on paths. %Here, for convenience's sake, we exclude $1$ from $B_{M_n}(1,r)$. 
It is known that the MST converges in a local weak sense to a limit $\mathcal{M}$ rooted at $\emptyset$, see \cite{Add13,NT24} and the overview in Section~\ref{SS:lit_overview}. For every fixed $r$, local convergence gives convergence in distribution of the corresponding ball sizes. Theorem~\ref{T:forest_formula} implies (see \eqref{eq:ball_factorial}) that the ball sizes have finite $k$-th moments for any $k \in \N$, which gives the following.
\begin{corollary} \label{C:converge_E}
    For every fixed $r \in \N$,
    \begin{equation*}
        \limn \E_n\bigl[|B_{M_n}(1,r)|\bigr] = \E\bigl[|B_{\mathcal{M}}(\emptyset,r)|\bigr] = \sum_{k=1}^r \Psi(P_k).
    \end{equation*}
\end{corollary}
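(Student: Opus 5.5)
We prove the two equalities separately. The first is convergence of expectations given convergence in distribution. The second identifies the limit of the finite-$n$ formula \eqref{eq:ball_as_prob} using Theorem~\ref{T:forest_formula}.

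\textbf{Second equality.} Fix $r$. By \eqref{eq:ball_as_prob}, $\E_n[|B_{M_n}(1,r)|]$ is a finite sum over $k \le r$ of $(n-1)_k \P_n(P_k \subseteq M_n)$. Here $P_k$ is understood as a path with one endpoint pinned at vertex $1$, and each path is counted once by its far endpoint, which is the convention implicit in \eqref{eq:ball_as_prob}. Since $(n-1)_k = n^k(1+o(1))$ for fixed $k$, and Theorem~\ref{T:forest_formula} with $F = P_k$ (a single component, $m = k$) gives $n^k \P_n(P_k \subseteq M_n) \to \Psi(P_k)$, each term converges. So the finite sum converges to $\sum_{k=1}^r \Psi(P_k)$. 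One should check that the normalization of ``$P_k \subseteq M_n$'' used in \eqref{eq:ball_as_prob} matches the one in \eqref{eq:forest_formula}. By vertex exchangeability this is automatic.

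\textbf{First equality.} Local weak convergence of $M_n$ rooted at vertex $1$ to $(\mathcal{M},\emptyset)$ gives $|B_{M_n}(1,r)| \to |B_{\mathcal{M}}(\emptyset,r)|$ in distribution for fixed $r$. It then suffices to show that the sequence is uniformly integrable, for which a bounded second moment is enough: $\sup_n \E_n[|B_{M_n}(1,r)|^2] < \infty$.

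Write $|B_{M_n}(1,r)|^2$ as a sum over ordered pairs of vertices $(u,v)$ in the ball. Each pair lies in the union of two paths from $1$ of lengths at most $r$. This union is a tree $T$ with at most $2r$ edges containing vertex $1$. So the second moment is bounded by a sum, over finitely many tree shapes $T$ rooted at $1$, of the form
\[
O\bigl(n^{|V(T)|-1}\bigr)\,\P_n(T \subseteq M_n).
\]
By Theorem~\ref{T:forest_formula}, each summand is $n^{|E(T)|}\P_n(T\subseteq M_n)\cdot O(1)$, which is bounded in $n$. (Equivalently, one can use the factorial-moment identity \eqref{eq:ball_factorial} that the paper refers to.) Alternatively, the $L^p$ part \eqref{eq:tree_count} combined with exchangeability of the root gives the same moment bound.

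With uniform integrability, convergence in distribution upgrades to convergence of the means. This gives $\limn \E_n[|B_{M_n}(1,r)|] = \E[|B_{\mathcal{M}}(\emptyset,r)|]$, which together with the second equality completes the proof. The only step that needs care is the moment bound, and specifically the bookkeeping that every pair contributes a tree with $|V(T)|-1 = |E(T)|$ free vertices, so that the $n$-powers cancel exactly.
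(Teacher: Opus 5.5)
Your proposal is correct and follows essentially the paper's route: the value $\sum_{k\le r}\Psi(P_k)$ comes from \eqref{eq:ball_as_prob} together with Theorem~\ref{T:forest_formula}, and its identification with $\E\bigl[|B_{\mathcal{M}}(\emptyset,r)|\bigr]$ comes from the local convergence \eqref{eq:local_conv} plus uniform integrability, obtained by decomposing moments of the ball size into tree inclusion probabilities. The only difference is that the paper bounds all factorial moments via \eqref{eq:ball_factorial}, whereas you use just the case $q=2$ (a bounded second moment), which is all that uniform integrability requires.
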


Furthermore, by a calculation of $\Psi(P_2) = \Psi(S_2)$, the limiting variance of the degree of a fixed vertex can be computed explicitly.
\begin{corollary} \label{C:var_deg}
    For every fixed vertex $v$,
    \begin{equation*}
        \Var\bigl(\deg_{M_n}(v)\bigr) \xrightarrow{n \to \infty} 6 - 4\zeta(3) \approx 1.1917723\ldots.
    \end{equation*}
\end{corollary}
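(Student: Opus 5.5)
The variance of $\deg_{M_n}(v)$ splits into a first-moment term and a second-factorial-moment term. Writing $D=\deg_{M_n}(v)$ for a fixed vertex $v$ and using exchangeability of vertices,
\begin{equation*}
    \E_n[D] = (n-1)\,\P_n(P_1 \subseteq M_n), \qquad \E_n[D(D-1)] = (n-1)_2\,\P_n(S_2^{(v)} \subseteq M_n),
\end{equation*}
where $S_2^{(v)}$ is a fixed cherry centred at $v$. The second identity holds because $D(D-1)$ counts ordered pairs of distinct $M_n$-neighbours of $v$. We also know $\E_n[D] = 2(n-1)/n \to 2$ exactly, since $M_n$ has $n-1$ edges. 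Theorem~\ref{T:forest_formula} gives $n\P_n(P_1\subseteq M_n)\to\Psi(P_1)=2$, which is consistent. Theorem~\ref{T:forest_formula} also gives $n^2\P_n(S_2\subseteq M_n)\to\Psi(S_2)$, and since $(n-1)_2\sim n^2$ we get
\begin{equation*}
    \Var(D)=\E[D(D-1)]+\E[D]-\E[D]^2 \longrightarrow \Psi(S_2)+2-4 = \Psi(S_2)-2.
\end{equation*}
The corollary is therefore equivalent to the identity $\Psi(P_2)=\Psi(S_2)=8-4\zeta(3)$.

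It remains to compute $\Psi(S_2)$ explicitly, and this is the main step. I would use the description of $\Psi$ in Section~\ref{SS:Psi}, or the recursion of Section~\ref{S:compute_Psi}, applied to the two-edge path. As a cross-check I would also derive it heuristically via the local limit, i.e.\ the invasion-percolation description on the Poisson-weighted infinite tree. For two fixed edges $e_1=\{v,a\}$ and $e_2=\{v,b\}$ with weights $x$ and $y$, where $x<y$ by symmetry (contributing a factor $2$), we need:
\begin{itemize}
    \item $e_1$ lies in $M_n$: in the Erd\H{o}s--R\'enyi graph $G(n,x/n)$ (weights rescaled by $n$), $v$ and $a$ must lie in different components, which has limiting probability governed by the survival of the rescaled clusters;
    \item $e_2$ lies in $M_n$: at level $y$, the components of $v\cup a$ and of $b$ must be disjoint without using $e_2$.
\end{itemize}
By local weak convergence, these reduce to the conditions that independent Poisson Galton--Watson clusters remain disjoint. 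The limit then becomes a double integral over $0<x<y<\infty$ involving the survival functions $1-\theta(x)$, or the total-progeny generating functions of Borel distributions.

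I expect the evaluation of this integral to be the main obstacle. The plan is to change variables to the dual parameters $x=\lambda$, with $\lambda e^{-\lambda}=\mu e^{-\mu}$, as in Frieze's $\zeta(3)$ computation. After this substitution the integrand should reduce to sums of the form $\sum_k k^{k-2}x^{k-1}e^{-kx}/k!$, with the integrals evaluated term by term, yielding $\sum_k 1/k^3=\zeta(3)$ together with rational constants. I would verify the constant $8$ by checking against $\Psi(P_1)=2$ computed the same way.

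Finally, to justify passing from probabilities to moments, note that no uniform integrability is needed. The identities above are exact for each $n$, so only the convergence of $n^2\P_n(S_2\subseteq M_n)$ given by Theorem~\ref{T:forest_formula} is used.
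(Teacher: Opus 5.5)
Your reduction is correct and matches the paper's. The identities $\E_n[D]=2(n-1)/n$ and $\E_n[D(D-1)]=(n-1)_2\P_n(S_2\subseteq M_n)$ are exact, so Theorem~\ref{T:forest_formula} alone gives $\Var(D)\to\Psi(S_2)+\Psi(S_1)-\Psi(S_1)^2=\Psi(S_2)-2$, with no uniform integrability needed.

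\textbf{The gap.} The value $\Psi(S_2)=8-4\zeta(3)$ is the entire content of the corollary beyond Theorem~\ref{T:forest_formula}, and you never derive it. You only announce that a Frieze-type dual-parameter substitution ``should'' produce $\zeta(3)$ plus rational constants, and nothing supports that prediction. In the limit your two bullet conditions become $\max(A_v,A_a)>x$ and $\max(A_b,\min(A_v,A_a))>y$ (for $x<y$). Writing $\Psi(S_2)=\E_A[\tau_{e_1}\tau_{e_2}]$ as your double integral then gives
\begin{equation*}
\Psi(S_2)=2\int_{0<x<y<\infty}\bigl(1-\theta(y)\bigr)\bigl(1+\theta(y)(1-\theta(y))-\theta(x)^2\bigr)\,dx\,dy .
\end{equation*}
This integrand is cubic in $\theta$. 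Expanding $1-\theta$ in its Borel series therefore produces double and triple sums, not Frieze's single sum $\sum_k k^{k-2}x^{k-1}e^{-kx}/k!$, which is linear in the component-size law. Your sketch gives no reason for these sums to collapse. Checking the constant against $\Psi(P_1)=2$ says nothing about $\Psi(S_2)$. The local-limit heuristic is also unnecessary, since Theorem~\ref{T:forest_formula} already identifies the limit.

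\textbf{How to close it.} Follow the paper's route. Write $A_u=\phi(U_u)$ with $U_u$ i.i.d.\ uniform and $\phi(t)=-\log(1-t)/t$, the inverse of $\theta$. Split according to which of $U_v,U_a,U_b$ is smallest, as in Lemma~\ref{L:Psi_recursion}.
\begin{itemize}
\item If the center $v$ is smallest, then $\tau_{e_1}\tau_{e_2}=\phi(U_a)\phi(U_b)$. The contribution is $\int_0^1F_L(z)^2\,dz$, where $F_L(z)=\int_z^1\phi=\zeta(2)-\Li(z)$.
\item If the leaf $a$ is smallest, then $\tau_{e_1}\tau_{e_2}=\phi(\min(U_v,U_b))\phi(U_b)$. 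The contribution is $\E[-\log(1-\min(U_v,U_b))\,\phi(U_b)]=\int_0^1 t\phi(t)\,dt=1$. The leaf $b$ gives the same.
\end{itemize}
Finally, $\int_0^1\Li(z)\,dz=\zeta(2)-1$. Integration by parts together with $\int_0^1\frac{1-z}{z}\log^2(1-z)\,dz=2\zeta(3)-2$ gives $\int_0^1\Li(z)^2\,dz=\zeta(2)^2+6-2\zeta(2)-4\zeta(3)$. Hence $\int_0^1F_L^2=6-4\zeta(3)$, so $\Psi(S_2)=8-4\zeta(3)$ and the variance converges to $6-4\zeta(3)$.
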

\noindent Interestingly, $\zeta(3)$ also appears as the limiting total weight of the MST~\cite{Fri85}. For comparison, the degree of a fixed vertex in the uniform spanning tree of $K_n$ has variance converging to $1$. The recursive description of $\Psi$ gives asymptotic formulas for stars $S_k$ and paths $P_k$. 
\begin{proposition} \label{P:star_path}
    As $k \to \infty$,
    \begin{align}
        \Psi(S_k) &= (1 + o(1))\zeta(2)^k, \label{eq:star_asymp}\\
        \Psi(P_k) &= (1 + o(1))\frac{k^2}{12}. \label{eq:path_asymp}
    \end{align}
\end{proposition}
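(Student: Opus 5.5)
The plan is to start from the recursive description of $\Psi$ in Section~\ref{S:compute_Psi} and specialise it to the two tree families. I expect that description to express $\Psi(T)$ as an integral over rescaled edge weights $t_e = n w_e$ in the local limit (the Poisson weighted infinite tree). There an edge $e$ of weight $t_e$ lies in $\mathcal M$ unless both of its sides, in the subgraph of edges lighter than $t_e$ with $e$ removed, are connected through lighter edges; by a branching-process/invasion argument this fails exactly when both sides are infinite. The relevant one-sided quantity is the survival probability $\theta(t)$ of a Poisson$(t)$ Galton--Watson tree, given by $1-\theta = e^{-t\theta}$. For both a star and a path, the recursion should reduce to iterating a single kernel acting on a function of the weight at the ``open end'' of the tree. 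The asymptotics then come from analysing this kernel.

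\emph{Stars.} Here $N_n(S_k)$ counts ordered $k$-tuples of neighbours of the centre, so $\Psi(S_k)=\lim \E_n[(\deg v)_k]$, the factorial moments of the limiting degree $D$. I would show, from the recursive formula, that conditionally on the root's local environment the $k$ edges at the root enter $\mathcal M$ through a Poisson process of rate $\lambda(t)\le 1$ on $[0,\infty)$. This gives $D=\mathrm{Poisson}(\Lambda)$ with $\Lambda=\int_0^\infty \lambda(t)\,dt$, so that $\Psi(S_k)=\E[\Lambda^k]$.

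The key steps are then:
\begin{enumerate}[(i)]
\item identify the essential supremum of $\Lambda$. Heuristically this is the case where the root's own side never percolates, giving $\int_0^\infty (1-\theta(t))\,dt$-type integrals. After the substitution $x=1-\theta$, $t=-\log x/(1-x)$, these should evaluate to $\int_0^1 \frac{-\log x}{1-x}\,dx=\zeta(2)$;
\item show that $\P(\Lambda\ge \zeta(2)-\varepsilon)$ decays only subexponentially in $1/\varepsilon$, or at least that $\E[\Lambda^k]^{1/k}\to\zeta(2)$ with the ratio $\E[\Lambda^k]/\zeta(2)^k \to 1$.
\end{enumerate}
For $\E[\Lambda^k]/\zeta(2)^k\to1$ one needs $\Lambda=\zeta(2)$ with probability tending to one in the relevant moment sense. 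If the recursion instead gives $\Psi(S_k)$ directly as a $k$-fold integral whose integrand factorises up to a common factor, I would apply Laplace's method to that common factor, which tends to $1$ in the regime dominating the integral.

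\emph{Paths.} I would write $\Psi(P_k)=\langle g, K^{k-1} f\rangle$ for an integral operator $K$ on weight-space $(0,\infty)$, with kernel built from $e^{-t}$-type Poisson factors and the one-sided non-percolation probabilities. Indeed, extending a path by one edge only requires knowing the weight of the last edge and whether its far side is still ``finite'' below that weight. The factor $k^2$ signals criticality: $K$ should have spectral radius $1$ with a non-summable (Jordan-block-like or continuous-spectrum) behaviour near $t=1$, the percolation threshold. My plan is to compute the generating function $G(s)=\sum_k \Psi(P_k)s^k$ by solving the resolvent equation $(I-sK)h=f$ explicitly, which should be possible since the kernels involve only $\theta$ and exponentials. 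I would then show that $G(s)\sim \frac{1}{6}(1-s)^{-3}$ as $s\uparrow1$. Karamata's Tauberian theorem, together with monotonicity of $\Psi(P_k)$ in $k$ (or eventual monotonicity from positivity of $K$), then gives $\Psi(P_k)\sim k^2/12$. This is consistent with Corollary~\ref{C:converge_E} and the $r^3/36$ ball growth.

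\emph{Main obstacle.} I expect the main difficulty to be the precise singular expansion of $G$ at $s=1$ for paths. This requires understanding the operator near the critical weight $t=1$, where $\theta(t)\approx 2(t-1)$ and the non-percolation probability degenerates, and it must pin down the constant $1/6$ exactly. My first attempt would be to localise: rescale $t=1+u/k$ and show that $K^{k}$ acts approximately as a diffusion-type operator in $u$ whose Green's function produces the $k^2$ growth with an explicit constant.
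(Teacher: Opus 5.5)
\textbf{Stars.} Your star argument fails at its central step. You posit $D=\mathrm{Poisson}(\Lambda)$ with essential supremum of $\Lambda$ equal to $\zeta(2)$, and then need $\E[\Lambda^k]/\zeta(2)^k\to1$. But if $\Lambda\le\zeta(2)$, dominated convergence gives $\E[\Lambda^k]/\zeta(2)^k\to\P(\Lambda=\zeta(2))$, so you would need $\Lambda\equiv\zeta(2)$. The representation is already inconsistent before that: $\E D=\E\Lambda\le\zeta(2)\approx1.645$ contradicts $\E D=\Psi(S_1)=2$. Also $D\ge1$, which rules out every mixed Poisson law.

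What is missing is one deterministic edge. Conditioning on the vertex of $S_k$ with the smallest $U_v$ (Lemma~\ref{L:Psi_recursion}) gives \eqref{eq:star_formula}:
$\Psi(S_k)=\int_0^1F_L^k\,dz+k\int_0^1F_L^{k-1}\,dz$, with $F_L(z)=\int_z^1\phi(t)\,dt$.
These are the factorial moments of $1+\mathrm{Poisson}(F_L(U))$ with $U$ uniform; the extra $1$ is the edge through which the root's cluster joins the giant. Your value for the supremum is right, since $F_L(0)=\zeta(2)$. But $F_L'(0)=-1$, so Laplace's method gives $\int_0^1F_L^k\,dz\sim\zeta(2)^{k+1}/k=o(\zeta(2)^k)$. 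The mixed-Poisson part is therefore negligible. The leading term $k\int_0^1F_L^{k-1}\,dz\sim\zeta(2)^k$ arises because the factor $k$ (the choice of minimising leaf) cancels the $1/k$ from Laplace's method. Your fallback, a common factor tending to $1$ in the dominant regime, does not capture this.

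\textbf{Paths.} Here the proposal is a programme rather than a proof. $K$ is never identified, and the singular expansion that fixes $1/6$ is exactly what is left open. The structure is also not a single iterate $\langle g,K^{k-1}f\rangle$. Decomposing at the minimiser gives the convolution \eqref{eq:path_formula}: $\Psi(P_k)=2+\sum_{a+b=k-2}\int_0^1F_{P_a}F_{P_b}\,dz$. Here $F_{P_n}=K^nF_L$ with $Kf(z)=(1-z)\phi(z)f(z)+\int_z^1(1-t)\phi'(t)f(t)\,dt$ (Lemma~\ref{L:F_recursion}). The relevant scaling is pure-jump, not diffusive: $F_{P_n}$ is the density of a decreasing chain with $nX_n\to\mathrm{Exp}(1/2)$ (Lemma~\ref{L:markov_conv}).

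Your generating-function route can nevertheless be completed from this structure. Set $H_s:=\sum_{n\ge0}s^nF_{P_n}$, so that $H_s=F_L+sKH_s$. Differentiating gives $(1-s(1-z)\phi(z))H_s'=-\phi(z)(1+sH_s)$, and with $H_s(1)=0$,
\[
1+sH_s(z)=\exp\Bigl(\int_z^1\frac{s\phi(t)}{1-s(1-t)\phi(t)}\,dt\Bigr).
\]
Using $\int_0^1\bigl(\frac{\phi(t)}{1-(1-t)\phi(t)}-\frac2t\bigr)dt=\log2$, this behaves like $\frac12\bigl((1-s)+z/2\bigr)^{-2}$ for small $z$ as $s\uparrow1$. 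Hence $\sum_k\Psi(P_k)s^k=\frac{2s}{1-s}+s^2\int_0^1H_s^2\,dz\sim\frac16(1-s)^{-3}$, which confirms your constant.

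Karamata's theorem, however, only yields $\sum_{k\le r}\Psi(P_k)\sim r^3/36$. Passing to $\Psi(P_k)\sim k^2/12$ needs monotonicity in $k$, which you assert without proof. It does hold. $F_{P_a}$ is non-increasing and $X_{b+1}\le X_b$, so $\int_0^1F_{P_a}F_{P_{b+1}}\,dz\ge\int_0^1F_{P_a}F_{P_b}\,dz$. Pairing $(a,b)\mapsto(a,b+1)$ in \eqref{eq:path_formula} then gives $\Psi(P_{k+1})\ge\Psi(P_k)$.

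Completed this way, your route bypasses Lemma~\ref{L:markov_conv} and Scheff\'e's lemma. It still rests on the minimiser decomposition and the monotone-density facts, which your proposal does not supply.
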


\noindent In view of \eqref{eq:ball_as_prob} and Proposition~\ref{P:star_path}, one should expect cubic volume growth of the ball sizes. The following theorem confirms this prediction and establishes a concentration bound.
\begin{theorem} \label{T:ball_size}
    We have
    \begin{equation}
        \limn \E_n\bigl[|B_{M_n}(1,r)|\bigr] = \frac{r^3}{36}(1 + o(1)) \qquad \text{as } r \to \infty. \label{eq:ball_size}
    \end{equation}
    Furthermore, there exist constants $C,c > 0$ such that, for every fixed $r \geq 1$ and every $\lambda \geq 0$,
    \begin{align}
        \limsup_{n \to \infty}\P_n\bigl(|B_{M_n}(1,r)| \geq \lambda r^3\bigr) &\leq Ce^{-c\lambda}, \label{eq:ball_exp_bound}\\
        \limsup_{n \to \infty}\P_n\bigl(\bigl||B_{M_n}(1,r)| - \E_n|B_{M_n}(1,r)|\bigr| \geq \lambda r^3\bigr) &\leq Ce^{-c\lambda}. \label{eq:ball_concentration}
    \end{align}
\end{theorem}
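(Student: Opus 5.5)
We prove the two parts of Theorem~\ref{T:ball_size} separately: the first moment follows from Corollary~\ref{C:converge_E} and Proposition~\ref{P:star_path}, and the tail bounds follow from bounding all factorial moments of the ball size via Theorem~\ref{T:forest_formula}.

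\textbf{First moment.} By Corollary~\ref{C:converge_E}, $\limn \E_n|B_{M_n}(1,r)| = \sum_{k=1}^r \Psi(P_k)$. By \eqref{eq:path_asymp}, $\Psi(P_k) = (1+o(1))k^2/12$. Summing a sequence whose terms are asymptotic to $k^2/12$ (a Cesàro-type argument, since the terms diverge) gives
\begin{equation*}
\sum_{k=1}^r \Psi(P_k) = (1+o(1))\frac{r^3}{36},
\end{equation*}
which is \eqref{eq:ball_size}.

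\textbf{Tail bounds.} The approach is to bound all moments of $|B_{M_n}(1,r)|/r^3$ by $C^j j!$ uniformly in $r$, and then apply Markov's inequality. Write $|B_{M_n}(1,r)|$ as the number of vertices joined to $1$ by a path of length at most $r$. Its $j$-th factorial moment counts $j$-tuples of distinct such vertices. The union of the $j$ paths from the root is a subtree $T$ of $M_n$ containing the root, with at most $j$ leaves and depth at most $r$. Summing over tree shapes and using \eqref{eq:forest_formula} (this is presumably the content of \eqref{eq:ball_factorial}) gives
\begin{equation*}
\limn \E_n\bigl[(|B_{M_n}(1,r)|)_j\bigr] \le \sum_{T} c_T\, \Psi(T),
\end{equation*}
where the sum runs over rooted trees $T$ of depth at most $r$ that are unions of $j$ root paths, and $c_T$ is a combinatorial multiplicity.

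\textbf{Main obstacle: bounding $\Psi$ on branching trees.} The key input is a bound of the form
\begin{equation*}
\Psi(T) \le A^{\#\text{branch points}} \prod_{\text{segments } s} C\,\ell_s,
\end{equation*}
where the tree is cut at its branch points into segments of lengths $\ell_s$. This says that segments behave like independent paths, each with $\Psi(P_\ell)\lesssim \ell^2$, while branching costs only a bounded factor (compare the star asymptotic $\zeta(2)^k$). I would try to derive this from the recursive description of $\Psi$ in Section~\ref{S:compute_Psi}, by peeling off one segment at a time and bounding the recursion through a submultiplicativity estimate. Granting the bound, we count the shapes. A tree with $j$ leaves has at most $2j-1$ segments, its shapes are counted by Catalan-type numbers at most $C^j$, and the segment lengths sum to at most $jr$. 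Summing $\prod_s \ell_s$ over lengths subject to the depth constraint, with each segment contributing the weight $C\ell_s$ of a path piece, should give a total of order $C^j j!\, r^{3j}$. Heuristically, each added leaf contributes about $\sum_{\ell\le r}\ell^2\approx r^3$. The $j!$ arises from the order in which leaves attach and matches an exponential tail.

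\textbf{Conclusion.} With $\E\bigl[(|B|/r^3)^j\bigr]\le C^j j!$ uniformly in $r$ and in the limit $n\to\infty$, we have $\E e^{c|B|/r^3}\le C'$ for small $c$. Markov's inequality then gives \eqref{eq:ball_exp_bound}. Since $\E_n|B_{M_n}(1,r)| = O(r^3)$ by the first part, \eqref{eq:ball_concentration} follows from \eqref{eq:ball_exp_bound} after adjusting the constants: the lower deviation is trivial once $\lambda$ exceeds a constant, and for small $\lambda$ we take $C$ large.
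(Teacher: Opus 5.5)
Your first-moment argument (Cesàro summation of $\Psi(P_k)\sim k^2/12$) and your reduction of \eqref{eq:ball_concentration} to \eqref{eq:ball_exp_bound} are fine and match the paper. So is the plan of bounding factorial moments via \eqref{eq:ball_factorial} and applying Markov's inequality. The gap is the estimate on $\Psi$ for branching trees. You only grant it, and it has the wrong form.

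As displayed, $\Psi(T)\le A^{\#\text{branch points}}\prod_s C\ell_s$ is already false for $j=1$. A single root path has no branch points and one segment, so the bound would give $\Psi(P_\ell)\le C\ell$, whereas $\Psi(P_\ell)\sim\ell^2/12$ by Proposition~\ref{P:star_path}. Your own remark ``$\Psi(P_\ell)\lesssim\ell^2$'' contradicts the display.

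More importantly, no bound that charges a factor per \emph{segment} can give $r^{3j}$. When a new leaf attaches inside an existing segment of length $L$, the weight $L$ becomes $a(L-a)\ell$. Summing over the split point $a\le L$ and the new length $\ell\le r$ multiplies the total by order $L^2r^2$, which is $\asymp r^4$ for segments of length comparable to $r$. Concretely, take $j=3$ and caterpillar shapes with spine segments in $[r/12,r/6]$ and pendant segments in $[r/4,r/2]$. These respect the depth constraint and already contribute $\gtrsim r^{10}\gg r^{9}$ to $\sum_T\prod_s\ell_s$. In general you get factorial-moment bounds of order $r^{4j-2}$. Markov's inequality then only yields $\P(|B|\ge\lambda r^3)\lesssim_j r^{j-2}\lambda^{-j}$, which is not uniform in $r$ for $j\ge 3$, so no exponential tail follows. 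Squaring the segment lengths makes this worse. Your heuristic ``each leaf contributes $\sum_{\ell\le r}\ell^2\approx r^3$'' is therefore not what your bound delivers.

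What is needed is a bound in which each additional \emph{leaf} costs a factor of order the diameter, however many segments it creates. The paper proves $\Psi(T)\le N\zeta(2)^K D^{K-1}$ (Lemma~\ref{L:Psi_bound_L_D}) by induction over rooted trees, with the two-part hypothesis
$\int_0^1F_{T_o}\le\zeta(2)^{\ell}(h+1)^{\ell-1}$ and $F_{T_o}(0)\le\zeta(2)^{\ell}(h+1)^{\ell}$.
This hypothesis propagates through Lemma~\ref{L:F_recursion} and \eqref{eq:integral_recursion} because each $F_{T_o}$ is non-increasing and $(1-t)\phi'(t)\le 1$. In Lemma~\ref{L:Psi_recursion} one then integrates a single branch and evaluates the others at $0$. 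For $T\in\mathcal{T}_{q+1,r}$ this gives $\Psi(T)\le(qr+1)\zeta(2)^{q+1}(2r)^q$, i.e.\ a factor $O(r)$ per leaf. The sequential-attachment count \eqref{eq:T_rq_bound} supplies the remaining factor $O(jr^2)$ per new leaf (at most $jr$ attachment vertices and $r+1$ lengths). Together these give $\sum_T\Psi(T)\le(q+1)!\,8^q r^{3q}$, which is the input your argument is missing.
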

\noindent
Together with local convergence and the Borel--Cantelli lemma, \eqref{eq:ball_exp_bound} implies that almost surely
\begin{equation*}
    \limsup_{r \to \infty}\frac{|B_{\mathcal{M}}(\emptyset,r)|}{r^3\log r} < \infty,
\end{equation*}
which improves the known upper logarithmic correction of the growth of $\mathcal{M}$. It remains open to determine the exact order of the maximal fluctuations of $|B_{\mathcal{M}}(\emptyset,r)|/r^3$ as $r \to \infty$.

Theorem~\ref{T:ball_size} assumes that first $n\to \infty$ and then $r \to \infty$. It is natural to ask the following.
\begin{question}
    Let $r(n) \to \infty$. For which growth rates $r(n)$ does
    \begin{equation*}
         \E_n\bigl[|B_{M_n}(1,r(n))|\bigr]= \frac{r(n)^3}{36}(1 + o(1))
    \end{equation*}
    hold? Does it remain valid for $r(n) = o(n^{1/3})$?
\end{question}

\begin{figure}[ht]
    \centering
    \treesubfigure
        {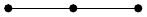}
        {$\Psi(S_2) = 8 - 4\zeta(3)$\\$\approx 3.1917\ldots$}
        {sfig:S2}
    \hfill
    \treesubfigure
        {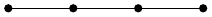}
        {$\Psi(P_3) = 52 - 36\zeta(3)$\\${}-4\zeta(5) \approx 4.5782\ldots$}
        {sfig:P3}
    \hfill
    \treesubfigure
        {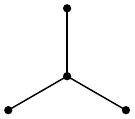}
        {$\Psi(S_3) = 108 - 48\zeta(3)$\\${}-36\zeta(4) - 6\zeta(5)$\\$\approx 5.1160\ldots$}
        {sfig:S3}
    \hfill
    \treesubfigure
        {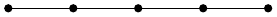}
        {$\Psi(P_4) \approx 6.1571\ldots$}
        {sfig:P4}

    \par\medskip

    \treesubfigure
        {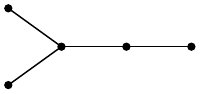}
        {$\Psi(B_4) \approx 7.1268\ldots$}
        {sfig:B4}
    \hfill
    \treesubfigure
        {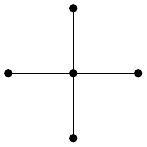}
        {$\Psi(S_4) \approx 8.2526\ldots$}
        {sfig:S4}
    \hfill
    \treesubfigure
        {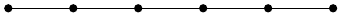}
        {$\Psi(P_5) \approx 7.925\ldots$}
        {sfig:P5}
    \hfill
    \treesubfigure
        {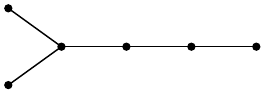}
        {$\Psi(T_{3,1,1}) \approx 9.3142\ldots$}
        {sfig:T311}

    \par\medskip

    \treesubfigure
        {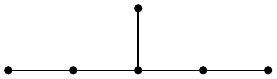}
        {$\Psi(T_{2,2,1}) \approx 9.6665\ldots$}
        {sfig:T221}
    \hfill
    \treesubfigure
        {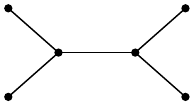}
        {$\Psi(D_{2,2}) \approx 10.9506\ldots$}
        {sfig:D22}
    \hfill
    \treesubfigure
        {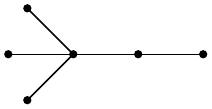}
        {$\Psi(B_5) \approx 11.3676\ldots$}
        {sfig:B5}
    \hfill
    \treesubfigure
        {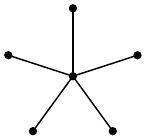}
        {$\Psi(S_5) \approx 13.3736\ldots$}
        {sfig:S5}

    \caption{The function $\Psi$ evaluated on the non-isomorphic trees with two to five edges. The values from \subref{sfig:P4} to \subref{sfig:S5} are obtained numerically. The code may be found at \url{https://github.com/L-makowiec/mst-tree-psi-computations}.}
    \label{fig:trees_up_to_five_edges}
\end{figure}

\subsection{Literature overview} \label{SS:lit_overview}

The MST is a fundamental problem in combinatorial optimization and network design. Classical greedy constructions are due to Kruskal~\cite{Kru56} and Prim~\cite{Pri57}. A foundational probabilistic result is Frieze's theorem~\cite{Fri85}: for i.i.d.\ uniform $[0,1]$ edge weights on $K_n$, the total weight of the MST converges in probability to $\zeta(3) \approx 1.2020569\ldots$. More generally, Frieze obtained the limit $\zeta(3)/F'(0)$ under moment assumptions on the edge-weight distribution function $F$, and Steele~\cite{Ste87} subsequently removed these assumptions.

\smallskip 
The results most closely related to the present work concern the local structure of the MST. Aldous~\cite{Ald90} proved convergence of the finite fringe subtree at a fixed vertex and obtained an explicit expression for the limiting degree distribution. Aldous and Steele~\cite[Theorem~5.4]{AS04} subsequently identified the weighted local limit for regular graph sequences whose degrees tend to infinity. For $K_n$, Addario-Berry~\cite{Add13} described this limit explicitly as the component $\mathcal{M}$ containing the root in the wired minimal spanning forest on the Poisson-weighted infinite tree, constructed from invasion percolation clusters. He also proved that $\mathcal{M}$ has cubic volume growth up to explicit subpolynomial corrections. Nachmias and Tang~\cite[Theorem~6.1]{NT24} established convergence in the unweighted Benjamini--Schramm topology: for every fixed $r$, and rooted tree $t$ of height $r$
\begin{equation} \label{eq:local_conv}
    \P_n( B_{M_n}(1,r) \simeq t) \nconv \P( B_{\mathcal{M}}(\emptyset,r) \simeq t).
\end{equation}
We remark that this convergence is not strong enough to yield the result in Theorem~\ref{T:forest_formula} even for a single tree. They also improved the volume-growth corrections to powers of $\log r$ and showed that $\mathcal{M}$ has spectral dimension $3/2$.

Tang and Zhang~\cite{TZ26} proved that any two non-adjacent edges of the MST of $K_n$ are negatively correlated for all sufficiently large $n$. They asked in \cite[Question~3.8]{TZ26} whether the second moment of the degree of a fixed vertex converges to that of the root degree in $\mathcal{M}$. Corollary~\ref{C:var_deg} answers this question affirmatively. By \cite[Corollary~2.4]{TZ26}, this also yields negative correlation for adjacent edges when $n$ is sufficiently large. In fact, the recent work \cite{Gup26} provides, among other results, a proof of this negative correlation and an explanation for the appearance of the limiting weight $\zeta(3)$ in the variance of the degree. 

\smallskip 

At the global scale, Addario-Berry, Broutin and Reed~\cite{ABR09} proved that the diameter of $M_n$ is of order $n^{1/3}$. Addario-Berry, Broutin, Goldschmidt and Miermont~\cite{ABGM17} subsequently proved convergence, after rescaling graph distances by $n^{-1/3}$, to a random compact measured binary $\mathbb R$-tree of Minkowski dimension three. Free and wired minimal spanning forests on infinite graphs are studied systematically in \cite{LPS06}.

\smallskip 
The uniform spanning tree (UST) provides a useful comparison with the MST. See \cite[Chapter~4]{LP16} for a general introduction and \cite{NP22} for local limits on regular graphs of diverging degree. For every fixed labeled tree $T$ with $m$ edges, $\P(T \subseteq \mathrm{UST}(K_n)) = (m + 1)/n^m$, which yields quadratic expected volume growth. The local limit of $\mathrm{UST}(K_n)$ is the Poisson$(1)$ Galton--Watson tree conditioned to survive.

Another recently studied model is the random spanning tree in random environment, which interpolates between the UST and the MST through an inverse-temperature parameter $\beta$; see \cite{MSS26,MSS24,Kus24,Mak25}. It was shown in \cite{Mak24,Kus25} that its local limit agrees with that of the UST when $\beta = o(n)$ and with that of the MST when $\beta$ is much bigger than $n\log n$. The behavior when $\beta \asymp n$, including the corresponding volume growth, remains open. It is natural to conjecture that all the results in Section~\ref{SS:results} also hold for this model whenever $\beta$ is much larger than $n \log n$.

%%%%

\subsection{Proof ideas} 

The proof of Theorem~\ref{T:forest_formula} begins with joint convergence of partitions induced by the \ERG{} random graph process with $p = t/n$. For $t \leq 1$, the distinguished vertices lie in separate components with probability tending to one, while for $t > 1$, their connectivity is asymptotically determined by their membership in the giant component. Given a fixed tree $T$, we assign to each $v \in V(T)$ an independent random variable $A_v$ satisfying $\P(A_v \leq t) = \theta(t)$, where $\theta(t)$ is the limiting proportion of vertices in the giant component of $G_n(t/n)$. At time $t$, the vertices satisfying $A_v \leq t$ form one block, while the remaining vertices are singletons. Exchangeability and the asymptotic size and nesting of the giant components identify these as the limiting partitions. 

Heuristically, the tree $T$ is contained in the MST if for every edge $e \in E(T)$, at time $p = w_e \approx t/n$, at least one of the two components of $T \setminus \{e\}$ contains no vertex that has yet joined the giant component. Otherwise, Kruskal's algorithm will eventually reject an edge for which the two endpoints are already connected to the giant. This is reminiscent of Aldous's construction in \cite{Ald90}, in which the component of a fixed vertex is grown until it first joins the giant component, and the branch containing this connection is then removed. The cycle property of the MST then expresses the inclusion of a fixed tree in terms of partitions induced by the \ERG{} process, yielding the limit $\Psi(T)$. Exchangeability, concentration, and uniform moment bounds give convergence of the normalized tree counts, while the contribution from overlapping embeddings is negligible, extending the result from trees to forests.

To analyze $\Psi(T)$, we condition on the vertex attaining the minimum of $(A_v)_{v \in V(T)}$. Independence between the resulting branches gives a recursion for $\Psi$ through functions $F_{T_o}$ associated with rooted graphs $T_o$, which themselves also satisfy a corresponding recursion. This yields the formulas for small trees and the asymptotics for stars. For paths, the functions $F_{P_n}$ are interpreted as the densities of a decreasing Markov chain, whose scaling limit gives $\Psi(P_k) \sim k^2/12$. Summing the path asymptotics yields cubic expected volume growth, while the exponential upper tail follows by expressing the factorial moments of the ball size as sums of $\Psi(T)$ over a finite set of trees and applying corresponding combinatorial bounds.

\subsection{Outline} 

Section~\ref{S:prelim} introduces the notation and the required background on minimum spanning trees, \ERG{} graphs, the function $\Psi$, and partitions. We then prove \eqref{eq:forest_formula} for a single tree in Section~\ref{S:proof_forest_formula}, establish \eqref{eq:tree_count}, and use it to extend the result to forests. Section~\ref{S:compute_Psi} gives a recursive description of $\Psi$, from which we derive the formulas for small trees and the asymptotics for stars and paths. Finally, Section~\ref{S:concentration} contains the proof of Theorem~\ref{T:ball_size}, while Appendix~\ref{S:Appendix} briefly illustrates how some explicit values can be obtained.

%%%%%%%%%%%%%%%%%%%%%%%%%%%%%%%%%%%%%%%%%%%%%%%%

\section{Preliminaries} \label{S:prelim}

\subsection{Notation} \label{SS:notation}

For each $n \geq 1$, let $[n] := \{1,\ldots,n\}$. We write $\P_n$ for the law of the i.i.d.\ uniform on $[0,1]$ edge weights $(w_e)_{e \in E(K_n)}$ and $\E_n$ for the corresponding expectation. Let $M_n$ denote the MST determined by these almost surely distinct weights, as defined in Section~\ref{SS:MST}. We later use $\P_A$ and $\E_A$ for the law and expectation of the limiting variables introduced in \eqref{eq:Law_A}.

For a finite graph $G$, we write $V(G)$ and $E(G)$ for its vertex and edge sets, $d_G(u,v)$ for the graph distance between $u$ and $v$, and $\deg_G(v)$ for the degree of $v$. We write $u \xleftrightarrow{G} v$ if $u$ and $v$ belong to the same connected component of $G$. For $A \subseteq E(G)$ and $W \subseteq V(G)$, we set
\begin{equation*}
G \setminus A := \bigl(V(G),E(G) \setminus A\bigr)
\end{equation*}
and $G \setminus W$ as the graph induced by the vertices in $V(G) \setminus W$. For $v \in V(G)$ and $r \geq 1$, define the ball of radius $r$ centered at $v$, excluding its center, by
\begin{equation*}
B_G(v,r) := \{u \in V(G) \setminus \{v\} : d_G(u,v) \leq r\}.
\end{equation*}
The exclusion of the center is merely a notational convenience\footnote{In \eqref{eq:local_conv}, the ball is viewed as a subgraph including the center.}, as it avoids additional factors when computing moments of the size of the ball.

All fixed trees and forests are finite abstract graphs independent of $n$. When such a graph is assigned distinct labels in $[n]$, the event $T \subseteq M_n$ means that this fixed labeled copy of $T$ is contained in the MST $M_n$. By vertex exchangeability, this probability is independent of the chosen labeling, and, by a slight abuse of notation, we denote its common value by $\P_n(T \subseteq M_n)$. For a tree $T$ and $u,v \in V(T)$, let $P_T(u,v)$ denote the unique path from $u$ to $v$ in $T$. For an edge $e \in E(T)$ with endpoints denoted by $e^-$ and $e^+$, let $T_e^-$ and $T_e^+$ be the components of $T \setminus \{e\}$ containing $e^-$ and $e^+$, respectively. For $k \geq 1$, let $S_k$ and $P_k$ denote the star and the path with $k$ edges.

For a fixed tree $T$, let
\begin{equation*}
N_n(T) := \bigl|\{f \colon V(T) \hookrightarrow [n] : \{f(u),f(v)\} \in E(M_n) \text{ for every } \{u,v\} \in E(T)\}\bigr|
\end{equation*}
be the number of injective embeddings of $T$ into $M_n$. For $m \geq 1$ and $x \in \mathbb R$, the falling factorial is defined as
\begin{equation*}
(x)_m := x(x - 1)\cdots(x - m + 1),
\end{equation*}
with $(x)_0 := 1$ and $(x)_m = 0$ whenever $x$ is a non-negative integer with $x < m$. Thus, if $T$ has $q$ vertices, vertex exchangeability gives
\begin{equation*}
\E_n[N_n(T)] = (n)_q \P_n(T \subseteq M_n).
\end{equation*}

Finally, for nonnegative functions $f$ and $g$, we write $f(n) = O(g(n))$ if $|f(n)| \leq Cg(n)$ for all sufficiently large $n$ and some $C > 0$, and $f(n) = o(g(n))$ if $f(n)/g(n) \to 0$. A subscript indicates permitted dependence of the implicit constant; for example, $f(n) = O_k(g(n))$ means that the constant may depend on $k$.

%%%%%%%%%%%%%%%%%%%%%%%%%%%%%%%%%%%%%%%%%
\smallskip
\subsection{Minimum spanning tree} \label{SS:MST}

Given a connected graph $G = (V,E)$, let $w = (w_e)_{e \in E(G)}$ be distinct edge weights. Recall that the MST is the tree that minimizes $\sum_{e \in E(T)} w_e$ among all spanning trees (connected cycle-free subgraphs of $G$). The two most well-known algorithms to construct the MST are Prim's algorithm~\cite{Pri57} and Kruskal's algorithm~\cite{Kru56}. We shall use the latter and briefly state it here. 

\smallskip
\textbf{Kruskal's algorithm:} Let $|E(G)| = m$, and sort the edges $e_1, \ldots, e_m$ by weight: $w_{e_1} < w_{e_2} < \ldots < w_{e_m}$, and examine the edges in this order. Let $G_i$ be the graph after the algorithm has examined $e_1, \ldots, e_i$ (with $V(G_0) = V(G)$ and $E(G_0) = \emptyset$). If the endpoints of $e_{i+1}$ are not connected in $G_i$, add $e_{i+1}$ to $G_i$ to obtain $G_{i+1}$. Otherwise, let $G_{i+1} = G_i$. The graph $G_m$ is the MST. 
\smallskip 

Kruskal's algorithm gives the following characterization.
\begin{lemma} \label{L:MST_cycle}
    An edge $e=\{u,v\}$ is rejected by Kruskal's algorithm if and only if there exists a path from $u$ to $v$ consisting entirely of edges with weight strictly smaller than $w_e$. Equivalently, $e$ is rejected if and only if it is the unique edge of maximal weight in some cycle of $G$.
\end{lemma}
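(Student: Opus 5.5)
We argue directly from Kruskal's algorithm as stated above, and then derive the cycle formulation from the path formulation. Throughout, the weights are distinct and the edges are examined in increasing order $e_1,\ldots,e_m$. Let $e=e_{i+1}=\{u,v\}$. The key observation is that after $i$ steps the accepted graph $G_i$ and the graph $H_i:=(V(G),\{e_1,\ldots,e_i\})$ of all edges lighter than $e$ have the same connected components. We prove this by induction on $i$. For $i=0$ both graphs are edgeless. For the inductive step, if $e_{i+1}$ is rejected, its endpoints are already connected in $G_i$, so adding it to $H_i$ does not change the components. If it is accepted, it is added to both graphs, so both merge the same two components.

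Given the observation, $e$ is rejected if and only if $u\xleftrightarrow{G_i}v$. This holds if and only if $u\xleftrightarrow{H_i}v$, i.e.\ there is a path from $u$ to $v$ using only edges of weight strictly smaller than $w_e$. Such a path cannot contain $e$, since $e$ is not lighter than itself.

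For the equivalence with the cycle formulation, suppose first that such a path $P$ exists. We may take $P$ to be a simple path, by shortcutting any repeated vertices. Since $e\notin P$, the union $P\cup\{e\}$ is a cycle in which $e$ is the unique heaviest edge. Conversely, suppose $e$ is the unique maximum-weight edge of a cycle $C$. Then $C\setminus\{e\}$ is a $u$–$v$ path all of whose edges are strictly lighter than $e$.

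The only step that needs real care is the inductive invariant. In particular we must note that a rejected edge never changes connectivity, because its endpoints are already joined by accepted edges. Everything else is immediate.
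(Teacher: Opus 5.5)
Your proof is correct. The paper gives no proof of this lemma and presents it as a standard consequence of Kruskal's algorithm; your invariant, that the accepted graph $G_i$ and the graph $H_i$ of all edges lighter than $e_{i+1}$ have the same connected components, is exactly the routine argument behind that claim, and your passage to the cycle formulation (including the observation $e\notin P$, so that $P\cup\{e\}$ is a genuine cycle with $e$ as its unique heaviest edge) is also handled properly.
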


%%%%%%%%%%%%%%%%%%%%%%%%%%%%%%%%
\smallskip
\subsection{\ERG{} graphs} \label{SS:ERG}

For $p \in [0,1]$, let $G_n(p)$ be the \ERG{} random graph obtained by retaining each edge of $K_n$ independently with probability $p$. A retained edge is called open, and every other edge is called closed. We regard $p$ as time in the random graph process $(G_n(p))_{p \in [0,1]}$. Let $\mathbf C_n(p)$ be the collection of connected components of $G_n(p)$, and write $\mathcal{C}_1^n(p),\mathcal{C}_2^n(p),\ldots$ for these components in decreasing order of size. We identify the components with their vertex sets. To break ties in a permutation-invariant way\footnote{Usually one chooses the component with the smallest vertex label, which is not permutation-invariant.}, attach an independent uniform random variable to each vertex in $K_n$ and, among components of the same size, place first the component containing the vertex with the smallest attached value.

We will mainly consider $p = t/n$ for fixed $t > 0$. It is a classical result of Erdős and Rényi~\cite{ER60} that the \ERG{} random graph undergoes a phase transition at $t = 1$: for $t \leq 1$, every component has sublinear size, whereas for $t > 1$, the largest component has linear size and every other component has sublinear size. For $t > 1$, let $\theta(t)$ be the unique positive solution of
\begin{equation*}
    1 - \theta(t) = e^{-t\theta(t)}.
\end{equation*}
Equivalently, $\theta(t)$ is the survival probability of a Galton--Watson tree with Poisson($t$) offspring. For convenience's sake, we set $\theta(t) = 0$ for $t \leq 1$. We will make use of the following well-known theorem, see e.g.\ Chapter~4 of \cite{vdH17}.

\begin{theorem} \label{T:ERG_sizes}
    For every fixed $t > 0$,
    \begin{equation*}
        \frac{|\mathcal{C}_1^n(t/n)|}{n} \xrightarrow{\P} \theta(t) \qquad \text{and} \qquad \frac{|\mathcal{C}_2^n(t/n)|}{n} \xrightarrow{\P} 0.
    \end{equation*}
    % For every fixed $t \leq 1$,
    % \begin{equation*}
    %     \frac{|\mathcal C_1^n(t/n)|}{n} \xrightarrow{\P} 0.
    % \end{equation*}
\end{theorem}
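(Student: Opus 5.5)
The plan follows the classical exploration-process (breadth-first search) argument for the Erdős--Rényi giant component, as in Chapter~4 of \cite{vdH17}. We fix $t>0$ and set $p=t/n$. We explore the component of a vertex by revealing its unexplored neighbours one at a time. Up to a coupling error, the number of vertices discovered from each active vertex is $\mathrm{Bin}(n-k,t/n)$, where $k$ is the number of vertices found so far. Hence component sizes are stochastically sandwiched between the total progenies of Galton--Watson trees with $\mathrm{Bin}(n-k_0,t/n)$ and $\mathrm{Bin}(n,t/n)$ offspring. As long as $k_0=o(n)$, both offspring laws converge to Poisson$(t)$.

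\textbf{Step 1: the subcritical and critical case $t\le 1$.} Here it suffices to show $|\mathcal C_1^n(t/n)|/n\to 0$ in probability. We bound $\P(|\mathcal C(v)|\ge \varepsilon n)$ by the probability that a Galton--Watson tree with $\mathrm{Bin}(n,t/n)$ offspring has total progeny at least $\varepsilon n$. This probability tends to $0$, because the limiting Poisson$(t)$ tree is almost surely finite when $t\le1$. By a union bound over the expected number of vertices in large components and Markov's inequality, the number of vertices in components of size at least $\varepsilon n$ is $o_\P(n)$. Since such a component would itself contribute at least $\varepsilon n$ vertices, this forces $|\mathcal C_1^n|<\varepsilon n$ with high probability.

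\textbf{Step 2: the supercritical case, first moment.} Fix $t>1$ and a cutoff $K=K_n$ with $K\to\infty$ slowly (for instance $K=\log^2 n$). Let $Z_{\ge K}$ be the number of vertices in components of size at least $K$. The local Poisson approximation gives
\begin{equation*}
\E[Z_{\ge K}]/n\to\theta(t),
\end{equation*}
since the probability that the Poisson$(t)$ tree has progeny at least $K$ tends to its survival probability $\theta(t)$. For the second moment, we explore the components of two vertices. They are nearly independent unless they meet, and meeting costs only $O(K^2/n)$, which gives $\Var(Z_{\ge K})=o(n^2)$. Chebyshev's inequality then yields $Z_{\ge K}/n\to\theta(t)$ in probability.

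\textbf{Step 3: the main obstacle, no middle ground and uniqueness.} The hard step is to show that, with high probability, all components of size at least $K$ are in fact one component. First, we show there is no component with size between $K$ and $\delta n$. When the explored set has size $k\le\delta n$ with $\delta$ small enough that $t(1-\delta)>1$, the exploration walk has positive drift $t(1-\delta)-1$. A Chernoff bound shows the active set has size at least $ck$ after $k$ steps, except with probability $e^{-ck}$, and summing over $k\ge K$ and over the $n$ starting vertices gives $o(1)$. Second, any two components that have each grown past $\delta n$ must merge. I would use a sprinkling argument: write $G_n(t/n)$ as the union of $G_n(t'/n)$ and an independent $G_n(\eta/n)$ with $t'<t$ close to $t$. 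Two vertex sets of linear size are joined by at least one sprinkled edge with probability $1-e^{-\Omega(n)}$. A union bound over the at most $1/\delta$ large components then merges them, and the continuity of $\theta$ accounts for replacing $t$ by $t'$.

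Combining the three steps, $|\mathcal C_1^n|=Z_{\ge K}+o_\P(n)$, so $|\mathcal C_1^n|/n\to\theta(t)$. Every other component has size less than $K=o(n)$ with high probability, so $|\mathcal C_2^n|/n\to0$. The tie-breaking rule is irrelevant, since with high probability the largest component is unique.
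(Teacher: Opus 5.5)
The paper gives no proof of its own: it quotes the result from Chapter~4 of \cite{vdH17}. Your outline (branching-process comparison, first and second moments of $Z_{\geq K}$, no middle ground, merging of the large components) is a correct sketch of that classical exploration-process argument, and the one step to phrase more carefully is the sprinkling. Sprinkling only shows that the large components of $G_n(t'/n)$ lie in a single component of $G_n(t/n)$; what rules out a second component of size at least $K$ at time $t$ is the combination of $Z_{\geq K}/n\to\theta(t)$, a choice of $t'$ with $\theta(t)-\theta(t')<\delta$, and the no-middle-ground bound applied at time $t$ itself.
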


Recall that we choose edge weights $w_e \sim U[0,1]$ on the complete graph $K_n$. We couple these weights and the \ERG{} process by declaring
\begin{equation*}
    e \in E(G_n(p)) \quad\Longleftrightarrow\quad w_e \leq p.
\end{equation*}

\smallskip
\subsection{First description of \texorpdfstring{$\Psi$}{Psi}} \label{SS:Psi} 

The function $\Psi$ is defined in terms of the asymptotic density $\theta(t)$ of the giant component. Under the scaling $p = t/n$, the time at which a fixed vertex joins the giant component has asymptotic distribution function $\theta(t)$. Accordingly, let $T$ be a tree with vertex set $[q]$, and let $(A_i)_{i \in [q]}$ be i.i.d.\ random variables satisfying
\begin{equation} \label{eq:Law_A}
\P_A(A_1 \leq t) = \theta(t),
\end{equation}
with $A_1 > 1$ almost surely. Assign to each edge $e \in E(T)$ a nonnegative value $x_e$, corresponding to the scaled weight $n w_e$ of the associated edge in the original complete graph. Conditional on a realization of $(A_i)_{i \in [q]}$, we can define a process that accepts $T$ if
\begin{equation} \label{eq:accept_T}
\max_{e \in P_T(i,j)} x_e < \max\{A_i,A_j\} \quad \text{for all } i \neq j.
\end{equation}
In view of the cycle property in Lemma~\ref{L:MST_cycle}, this criterion may be interpreted as a limiting version of Kruskal's algorithm: $\max\{A_i,A_j\}$ represents the first time at which both $i$ and $j$ are connected to the giant component (outside $T$), and for every edge of $T$ to be accepted, all edges on the path $P_T(i,j)$ must have smaller weight than this connection time.

Recall that, for $e \in E(T)$, we let $T_e^-$ and $T_e^+$ be the two connected components of $T \setminus \{e\}$. In the preceding description, the first time at which each of these components contains a vertex in the giant component is
\begin{equation} \label{eq:def_tau}
\tau_e(T) := \max\Big\{\min_{u \in V(T_e^-)} A_u,\min_{v \in V(T_e^+)} A_v\Big\} = \min_{u \in V(T_e^-),\,v \in V(T_e^+)}\max\{A_u,A_v\}.
\end{equation}
We have the following correspondence between $\tau_e(T)$ and the acceptance criterion in \eqref{eq:accept_T}.

\begin{lemma}  \label{L:equiv_accept}
For every $(x_e)_{e \in E(T)}$, it holds that
\begin{equation*}
\max_{e \in P_T(i,j)} x_e < \max\{A_i,A_j\} \ \forall i,j, \ i\neq j \quad \Longleftrightarrow \quad x_e < \tau_e(T) \ \forall e \in E(T).
\end{equation*}
\end{lemma}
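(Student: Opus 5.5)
We prove the two implications separately. Both rest on the second expression for $\tau_e(T)$ in \eqref{eq:def_tau}, namely $\tau_e(T) = \min_{u \in V(T_e^-),\,v \in V(T_e^+)}\max\{A_u,A_v\}$. They also use one elementary fact about trees: for distinct $i,j$, an edge $e$ lies on $P_T(i,j)$ if and only if $i$ and $j$ lie in different components of $T \setminus \{e\}$.

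\emph{($\Leftarrow$)} Assume $x_e < \tau_e(T)$ for every $e \in E(T)$, and fix $i \neq j$. For each edge $e \in P_T(i,j)$, the vertices $i$ and $j$ lie in different components of $T\setminus\{e\}$. Hence the pair $(i,j)$, in the appropriate order, is admissible in the minimum defining $\tau_e(T)$, which gives $\tau_e(T) \le \max\{A_i,A_j\}$. Therefore $x_e < \tau_e(T) \le \max\{A_i,A_j\}$. Since $P_T(i,j)$ has finitely many edges, taking the maximum over $e \in P_T(i,j)$ preserves the strict inequality. This yields \eqref{eq:accept_T}.

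\emph{($\Rightarrow$)} Assume \eqref{eq:accept_T}, and fix $e \in E(T)$. Because $V(T_e^-)$ and $V(T_e^+)$ are finite and nonempty, the minimum in \eqref{eq:def_tau} is attained at some pair $u \in V(T_e^-)$, $v \in V(T_e^+)$ with $\tau_e(T) = \max\{A_u,A_v\}$. These vertices satisfy $u \neq v$ and are separated by $e$, so $e \in P_T(u,v)$. Applying \eqref{eq:accept_T} to the pair $(u,v)$ gives
\begin{equation*}
x_e \le \max_{f \in P_T(u,v)} x_f < \max\{A_u,A_v\} = \tau_e(T).
\end{equation*}

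The two implications together prove the lemma. It remains to check the equality of the two expressions in \eqref{eq:def_tau}, which the argument above uses. Write $a = \min_{V(T_e^-)} A$ and $b = \min_{V(T_e^+)} A$. Every admissible pair satisfies $\max\{A_u,A_v\} \ge \max\{a,b\}$, and choosing $u$ and $v$ to be the minimizers on each side attains this value.
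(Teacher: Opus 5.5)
Your proof is correct and follows essentially the same route as the paper: both directions rely on the fact that $e \in P_T(i,j)$ exactly when $e$ separates $i$ and $j$, together with the min--max expression for $\tau_e(T)$ in \eqref{eq:def_tau}. Picking a minimizing pair in the ($\Rightarrow$) direction is just an explicit version of the paper's ``taking the minimum over $i$ and $j$''. Your check that the two expressions in \eqref{eq:def_tau} agree fills in a detail the paper leaves implicit.
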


\begin{proof}
 Assume the left-hand side holds, and fix $e \in E(T)$. For any $i \in V(T_e^-),j \in V(T_e^+)$, we have $x_e \leq \max_{e \in P_T(i,j)} x_e < \max(A_i, A_j)$. Taking the minimum over $i$ and $j$ gives the right-hand side. If the right-hand side holds, fix $i,j$ and let $e \in P_T(i,j)$. Since $i$ and $j$ must lie in different components of $T \setminus \{e\}$, we have $x_e < \tau_e(T) \leq \max\{ A_i, A_j\}$. Taking the maximum over all edges $e \in P_T(i,j)$ gives the left-hand side.
\end{proof}

\noindent Conditional on $(A_i)_{i \in [q]}$, the tree $T$ is therefore accepted precisely when
\begin{equation*}
(x_e)_{e \in E(T)} \in \prod_{e \in E(T)}[0,\tau_e(T)).
\end{equation*}
Averaging over $(A_i)_{i \in [q]}$ motivates the definition
\begin{equation} \label{eq:def_Psi}
\Psi(T) := \E_A\left[\prod_{e \in E(T)}\tau_e(T)\right].
\end{equation}

%%%%%%%%%%%%%%%%%%%%%%%%%%%%
\smallskip
\subsection{Partitions} \label{SS:partitions}
A partition $\pi$ of a finite set $S$ is a collection of nonempty, pairwise disjoint subsets of $S$ whose union is $S$. We call these subsets the blocks of $\pi$. For $i,j \in S$, we write $i \sim_\pi j$ if $i$ and $j$ belong to the same block of $\pi$.

Given a realization of random variables $(A_i)_{i \in [q]}$ as in \eqref{eq:Law_A} and finite times $0 < t_1 < \ldots < t_\ell$, define a collection of partitions
\begin{equation*}
    \bigl(\Pi^q(t_1),\ldots,\Pi^q(t_\ell)\bigr),
\end{equation*}
where each $\Pi^q(t_k)$ is a partition of $[q]$, by saying that
\begin{equation*}
    i \sim_{\Pi^q(t_k)} j
    \quad\Longleftrightarrow\quad
    i=j
    \quad\text{or}\quad
    A_i,A_j\leq t_k.
\end{equation*}
Thus, all $i \in [q]$ satisfying $A_i \leq t_k$ form one block, while every other element is a singleton. In particular, $\Pi^q(t_k)$ almost surely consists only of singletons whenever $t_k \leq 1$. 

For the complete graph, we shall define partitions in a similar manner but allow for the removal of some edges. Let $n\geq q$ and consider the vertices $1,\ldots,q$ of $K_n$, and let $F \subseteq E(K_q)$ be a finite fixed edge set of $K_n$ (we view $K_q$ as a subgraph of $K_n$ for $q \leq n$). Given finite times $0<t_1<\ldots<t_\ell$, the \ERG{} random graph process defines a sequence of partitions
\begin{equation*}
    \bigl(\Pi^{q,F}_n(t_1),\ldots,\Pi^{q,F}_n(t_\ell)\bigr),
\end{equation*}
where each $\Pi^{q,F}_n(t_k)$ is a partition of $[q]$, by saying that
\begin{equation*}
    i \sim_{\Pi^{q,F}_n(t_k)} j
    \quad\Longleftrightarrow\quad
    i \xleftrightarrow{G_n(t_k/n)\setminus F} j.
\end{equation*}
That is, $i$ and $j$ are in the same block of $\Pi^{q,F}_n(t_k)$ if and only if they are connected in the Erdős--Rényi random graph at time $t_k/n$ without using any edges from $F$. In Lemma~\ref{L:partition_convergence}, we show that these partitions converge in distribution to those partitions defined by $(A_i)_{i \in [q]}$.

%%%%%%%

\section{Proof of Theorem~\ref{T:forest_formula}} \label{S:proof_forest_formula} 

We prove Theorem~\ref{T:forest_formula} in four steps, each building on the previous one. First, we show convergence of the partitions induced by the \ERG{} process; then we show equation~\eqref{eq:forest_formula} for a single tree; then we prove \eqref{eq:tree_count} for a tree; and finally we extend Theorem~\ref{T:forest_formula} to a forest.
 
\subsection{Convergence of partitions}

The proof begins with the joint convergence of the partitions introduced in Section~\ref{SS:partitions}.

\begin{lemma} \label{L:partition_convergence}
    Fix $q,\ell \geq 1$ and an edge set $F \subseteq E(K_q)$. Then, for any fixed times $0 < t_1 < \cdots < t_\ell$,
    \begin{equation*}
        \bigl(\Pi^{q,F}_n(t_1), \ldots, \Pi^{q,F}_n(t_\ell)\bigr) \xrightarrow{(d)} \bigl(\Pi^{q}(t_1), \ldots, \Pi^{q}(t_\ell)\bigr).
    \end{equation*}
\end{lemma}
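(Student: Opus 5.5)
Since the partitions take values in a finite set, it suffices to show that for every sequence of partitions the probability that $\Pi^{q,F}_n(t_k)$ equals it for all $k$ converges to the corresponding limiting probability. I would identify the limit through the giant component. For each $k$ define the random set $S_n(t_k) := \{i \in [q] : i \in \mathcal{C}_1^n(t_k/n)\}$ (components taken in $G_n(t_k/n)$). The plan has three steps. First, show that with probability $1-o(1)$ the partition $\Pi^{q,F}_n(t_k)$ has $S_n(t_k)$ as one block (when nonempty) and singletons otherwise. Second, show that the indicator vectors $(\mathbf 1\{i \in S_n(t_k)\})_{i,k}$ converge jointly to $(\mathbf 1\{A_i \le t_k\})_{i,k}$. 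Third, combine the two.

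\textbf{Removing $F$ and small components.} Let $E_n$ be the event that some edge of $F$ is open at time $t_\ell/n$. Since $|F|\le \binom q2$, we have $\P_n(E_n) \le \binom q2 t_\ell/n \to 0$. On $E_n^c$, the graphs $G_n(t_k/n)\setminus F$ and $G_n(t_k/n)$ coincide for all $k$. So we may ignore $F$.

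Next, two distinct fixed vertices $i,j$ lie in a common non-giant component with probability $o(1)$. By exchangeability, $\P(i\leftrightarrow j,\ i\notin\mathcal C_1) = \E[\sum_{C\ne\mathcal C_1}|C|(|C|-1)]/(n(n-1))$. This is at most $\E[|\mathcal C_2|/n]\cdot n/(n-1) \to 0$ by Theorem~\ref{T:ERG_sizes} and bounded convergence, since $\sum_C |C|\le n$. The same argument handles $t_k\le1$, where $|\mathcal C_1|/n\to0$: there all fixed vertices are singletons w.h.p., matching $\theta(t_k)=0$. For $t_k>1$, two vertices of $[q]$ are then connected iff both lie in $\mathcal C_1^n(t_k/n)$, up to probability $o(1)$. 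This completes the first step.

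\textbf{Joint law of membership in the giant.} This is the main obstacle, because I need joint convergence over several vertices and several times. It suffices to show, for each $1\le k_1,\dots,k_q\le \ell$ (with the convention that $k_i$ encodes the first time index at which $i$ is in the giant, or $\infty$), that
\[
\P_n\bigl(i\in\mathcal C_1^n(t_{k}/n) \text{ iff } k\ge k_i,\ \forall i,k\bigr)\to \prod_i \bigl(\theta(t_{k_i})-\theta(t_{k_i-1})\bigr),
\]
with the obvious modifications at the endpoints.

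The key ingredient is nesting: w.h.p. $\mathcal C_1^n(t_k/n)\subseteq \mathcal C_1^n(t_{k+1}/n)$ for consecutive supercritical times. The reason is that the components at time $t_{k+1}/n$ are unions of earlier components. If the old giant were not contained in the new largest component, the new largest component would have size at most $|\mathcal C_2^n(t_{k+1}/n)|=o_P(n)$. But the old giant still has linear size $\approx\theta(t_k)n>0$, a contradiction. Hence the sets $D_k:=\mathcal C_1^n(t_k/n)\setminus\mathcal C_1^n(t_{k-1}/n)$ are w.h.p. disjoint with sizes $|D_k|/n\to\theta(t_k)-\theta(t_{k-1})$ in probability.

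By exchangeability of vertex labels (the tie-breaking rule is permutation-invariant), conditional on the sizes $(|D_k|)_k$, the labels $1,\dots,q$ behave like a uniform sample without replacement from $[n]$. The probability that vertex $i$ falls in $D_{k_i}$ for all $i$ is therefore a product of ratios $|D_{k_i}|/n + O(q/n)$. Taking expectations and using bounded convergence gives the product formula above. This formula is exactly the law of $(\mathbf 1\{A_i\le t_k\})$ for i.i.d.\ $A_i$ with distribution function $\theta$.

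\textbf{Conclusion.} Combining the three steps, the joint law of $(\Pi^{q,F}_n(t_k))_k$ differs by $o(1)$ from that of the partitions built from $(S_n(t_k))_k$. Those partitions converge in distribution to the partitions built from $\{i: A_i\le t_k\}$, which are $(\Pi^q(t_k))_k$.
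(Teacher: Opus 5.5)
Your proposal is correct and follows essentially the same route as the paper: discard $F$ at cost $O(1/n)$, rule out connections outside the largest component via exchangeability and $\E\bigl[|\mathcal{C}_2^n|/n\bigr]\to 0$, establish nesting of the giants, and obtain the law of the layer assignment as $\prod_\tau (|D_\tau|)_{m_\tau(f)}/(n)_q$ by exchangeability. There is one wording slip in the nesting step: the set that would have size at most $|\mathcal{C}_2^n(t_{k+1}/n)|$ is the component of $G_n(t_{k+1}/n)$ containing the old giant, not the new largest component, and the rest of your argument reads correctly with that fix.
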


\begin{proof}
    We first reduce to the case $F=\emptyset$. The two sequences of partitions
    \begin{equation*}
        \bigl(\Pi^{q,F}_n(t_1),\ldots,\Pi^{q,F}_n(t_\ell)\bigr)
        \qquad\text{and}\qquad
        \bigl(\Pi^{q,\emptyset}_n(t_1),\ldots,\Pi^{q,\emptyset}_n(t_\ell)\bigr)
    \end{equation*}
    agree whenever no edge of $F$ is present in $G_n(t_\ell/n)$. Hence,
    \begin{equation*}
        \P_n\left(
        \bigl(\Pi^{q,F}_n(t_1),\ldots,\Pi^{q,F}_n(t_\ell)\bigr)
        \neq
        \bigl(\Pi^{q,\emptyset}_n(t_1),\ldots,\Pi^{q,\emptyset}_n(t_\ell)\bigr)
        \right)
        \leq \frac{|F|t_\ell}{n}=o(1).
    \end{equation*}
    It is therefore enough to prove the result when $F=\emptyset$, and we write $\Pi^q_n=\Pi^{q,\emptyset}_n$.

    We first consider times $t_k\leq1$. For $i\neq j$, exchangeability gives
    \begin{align}
        \P_n\left(i\xleftrightarrow{G_n(t_k/n)}j\right)
        &= \frac{1}{n(n-1)}\E_n\big[ \sum_{ u} \sum_{v \neq u} \mathbf{1}_{u,v \text{ are in the same component}} \big] \nonumber \\
        &=
        \frac{1}{n(n-1)} \E_n\Big[ \sum_{\mathcal{C}\in\mathbf{C}_n(t_k/n)} |\mathcal{C}|(|\mathcal{C}|-1)
        \Big]  \nonumber \\
        &\leq \E_n\big[ \frac{|\mathcal{C}_1^n(t_k/n)|}{n}
        \big] \label{eq:bound_sub_crit_con}.
    \end{align}
    By Theorem~\ref{T:ERG_sizes}, $|\mathcal{C}_1^n(t_k/n)|/n \xrightarrow{\P} 0$. Since this sequence is bounded, the last expectation converges to zero. A union bound over the finitely many pairs $i,j \in [q]$ and the finitely many times $t_k \leq 1$ shows that, jointly at all such times, $\Pi^q_n(t_k)$ consists only of singletons with probability tending to one. The same is almost surely true for $\Pi^q(t_k)$ because $A_i > 1$ almost surely. Without loss of generality, we may therefore assume for the remainder of the proof that $1 < t_1 < \cdots < t_\ell$.

    Define the ``good'' nesting event 
    \begin{equation*}
        \mathcal{N}_n := \big\{ \mathcal{C}_1^n(t_1/n)\subseteq\cdots\subseteq\mathcal{C}_1^n(t_\ell/n)\big\} 
    \end{equation*}
    and the ``bad'' event
    \begin{equation*}
        \mathcal{B}_n:=
        \big\{
            \exists\, k\in[\ell],\ 
            \exists\, i,j\in[q],\ i\neq j:
            i\xleftrightarrow{
                G_n(t_k/n)\setminus\mathcal{C}_1^n(t_k/n)
                }j
        \big\}
    \end{equation*}
    that some of the distinguished vertices are connected outside the largest component. As any component in $G_n(t_k/n)$ is contained in a component of $G_n(t_{k + 1}/n)$, if $\mathcal{C}_1^n(t_k/n)$ were not contained in $\mathcal{C}_1^n(t_{k + 1}/n)$, then
    \begin{equation*}
        |\mathcal{C}_1^n(t_k/n)| \leq |\mathcal{C}_2^n(t_{k + 1}/n)|.
    \end{equation*}
    By Theorem~\ref{T:ERG_sizes}, the left-hand side divided by $n$ converges in probability to $\theta(t_k) > 0$, whereas the right-hand side divided by $n$ converges in probability to $0$. A union bound over the finitely many values of $k$ therefore gives
    \begin{equation*}
        \P_n(\mathcal{N}_n) = 1 - o(1).
    \end{equation*}
    Similarly as in \eqref{eq:bound_sub_crit_con}, for every $i\neq j$,
    \begin{equation*}
        \P_n\big( i \xleftrightarrow{G_n(t_k/n)\setminus\mathcal{C}_1^n(t_k/n)} j \big) \leq \E_n\Big[\frac{|\mathcal{C}_2^n(t_k/n)|}{n}\Big]
        =o(1),
    \end{equation*}
    where the last equality follows from Theorem~\ref{T:ERG_sizes}. Hence, by a union bound,
    \begin{equation*}
        \P_n(\mathcal{B}_n)=o(1).
    \end{equation*}

    On $\mathcal{N}_n$, define the disjoint layers
    \begin{align*}
        D_1&:=\mathcal{C}_1^n(t_1/n),\\
        D_k&:=\mathcal{C}_1^n(t_k/n)
        \setminus\mathcal{C}_1^n(t_{k-1}/n),
        \qquad 2\leq k\leq\ell,\\
        D_\infty&:=[n]\setminus\mathcal{C}_1^n(t_\ell/n).
    \end{align*}
    By Theorem~\ref{T:ERG_sizes}, jointly,
    \begin{align*}
        \frac{|D_1|}{n}&\xrightarrow{\P}\theta(t_1)=:p_1,\\
        \frac{|D_k|}{n}&\xrightarrow{\P}
        \theta(t_k)-\theta(t_{k-1})=:p_k,
        \qquad 2\leq k\leq\ell,\\
        \frac{|D_\infty|}{n}&\xrightarrow{\P}
        1-\theta(t_\ell)=:p_\infty.
    \end{align*}
    Define the random layer assignment of the vertices $[q]$ by
    \begin{equation*}
        \mathbf L_n=(L_{n,i})_{i\in[q]},
        \qquad
        L_{n,i}=\tau \in \{1,\ldots,\ell,\infty\}
        \quad\Longleftrightarrow\quad
        i\in D_\tau,
    \end{equation*}
    and define $\mathbf L_n$ arbitrarily on $\mathcal{N}_n^c$. Similarly, define $\mathbf L=(L_i)_{i\in[q]}$ by
   \begin{equation*}
        L_i :=
        \begin{cases}
            1, & A_i\leq t_1,\\
            k, & t_{k-1}<A_i\leq t_k,
            \qquad 2\leq k\leq\ell,\\
            \infty, & A_i>t_\ell.
        \end{cases}
    \end{equation*}
    For $f = (f_i)_{i \in [q]}\in\{1,\ldots,\ell,\infty\}^q$, define partitions
    $\pi_k(f)$ of $[q]$ by
    \begin{equation*}
        i\sim_{\pi_k(f)}j
        \quad\Longleftrightarrow\quad
        i=j
        \quad\text{or}\quad
        f_i,f_j\leq k.
    \end{equation*}
    On $\{ \mathcal{N}_n\cap\mathcal{B}_n^c \}$, two distinct vertices
    $i,j\in[q]$ are connected in $G_n(t_k/n)$ if and only if they both
    belong to $\mathcal{C}_1^n(t_k/n)$. Consequently,
    \begin{equation*}
        \bigl(\Pi_n^q(t_1),\ldots,\Pi_n^q(t_\ell)\bigr)
        =
        \bigl(\pi_1(\mathbf L_n),\ldots,\pi_\ell(\mathbf L_n)\bigr)
    \end{equation*}
    on $ \{ \mathcal{N}_n\cap\mathcal{B}_n^c \}$. Moreover, from the
    definition of $\Pi^q$,
    \begin{equation*}
        \bigl(\Pi^q(t_1),\ldots,\Pi^q(t_\ell)\bigr)
        =
        \bigl(\pi_1(\mathbf L),\ldots,\pi_\ell(\mathbf L)\bigr),
    \end{equation*}
    almost surely. Since $\P_n(\mathcal{N}_n\cap\mathcal{B}_n^c) = 1 -o(1)$, it therefore suffices to prove that
    \begin{equation*}
        \mathbf L_n\xrightarrow{(d)}\mathbf L.
    \end{equation*}

    Since
    \begin{equation*}
        \P_A(L_i=\tau)=p_\tau,
        \qquad \tau\in\{1,\ldots,\ell,\infty\},
    \end{equation*}
    the independence of $A_1,\ldots,A_q$ gives
    \begin{equation*}
        \P_A(\mathbf L=f)=\prod_{i=1}^q p_{f_i}.
    \end{equation*}
    For $\tau\in\{1,\ldots,\ell,\infty\}$, let $m_\tau(f):= |\{i\in[q]:f_i=\tau\}|$. By exchangeability,
    \begin{align*}
        \P_n\bigl(\mathcal{N}_n\cap\{\mathbf L_n=f\}\bigr)
        &=
        \E_n\Big[ \frac{\mathbf{1}_{\mathcal{N}_n}}{(n)_q} \sum_{\substack{v_1,\ldots,v_q\in[n]\\\mathrm{distinct}}} \prod_{i=1}^q\mathbf{1}_{\{v_i\in D_{f_i}\}}
        \Big]\\
        &= \E_n\Big[\mathbf{1}_{\mathcal{N}_n}\frac{\prod_{\tau\in\{1,\ldots,\ell,\infty\}} (|D_\tau|)_{m_\tau(f)}} {(n)_q}
        \Big].
    \end{align*}
    Indeed, for each $\tau$, the coordinates $i$ satisfying $f_i=\tau$ must be filled by an ordered $m_\tau(f)$-tuple of distinct vertices of $D_\tau$, giving $(|D_\tau|)_{m_\tau(f)}$ choices, and, since the layers are disjoint, multiplying these counts gives exactly the number of ordered distinct $q$-tuples appearing in the sum. The joint convergence of the layer
    sizes gives
    \begin{equation*}
        \mathbf{1}_{\mathcal{N}_n}
        \frac{ \prod_{\tau\in\{1,\ldots,\ell,\infty\}}
        (|D_\tau|)_{m_\tau(f)}}
        {(n)_q}
        \xrightarrow{\P}
        \prod_{\tau\in\{1,\ldots,\ell,\infty\}}
        p_\tau^{m_\tau(f)}.
    \end{equation*}
    Since the random variable on the left is bounded, we obtain
    \begin{equation*}
        \P_n\bigl(\mathcal{N}_n\cap\{\mathbf L_n=f\}\bigr)
        \nconv
        \prod_{\tau\in\{1,\ldots,\ell,\infty\}}
        p_\tau^{m_\tau(f)}
        =
        \prod_{i=1}^q p_{f_i}
        =
        \P_A(\mathbf L=f).
    \end{equation*}
    As $\P_n(\mathcal{N}_n^c)=o(1)$, and the space $\{1,\ldots,\ell,\infty\}^q$ is finite, this completes the proof. 
\end{proof}

\subsection{Proof of (\ref{eq:forest_formula}) for a single tree}
We first show \eqref{eq:forest_formula} when $F = T$ is a single tree. Write $m = |E(T)|$ and label the $q = m + 1$ vertices of $T$ by $[q]$.
For $u,v \in V(T)$, $u \neq v$, define
\begin{equation*}
     R_n^T(u,v)=n \min_{\gamma:u\leftrightarrow v\text{ in }K_n\setminus E(T)} \max_{e\in E(\gamma)} w_e.
\end{equation*}
Thus $R_n^T(u,v)$ is the first rescaled time at which $u$ and $v$ are connected in $G_n(t/n) \setminus E(T)$. In particular, for every $t > 0$,
\begin{equation}\label{eq:R_partition_equivalence}
    t < R_n^T(u,v) \quad\Longleftrightarrow\quad u\text{ and }v\text{ belong to different blocks of }\Pi_n^{q,E(T)}(t).
\end{equation}

We first prove pointwise convergence of the conditional inclusion probabilities. Lemma~\ref{L:connection-tail} below will give an estimate allowing the dominated convergence theorem to be applied. 

 \begin{lemma} \label{L:pointwise_conv}
    Fix $x \in (0,\infty)^m$ with distinct entries. Then
    \begin{equation*}
         \limn \P_n ( T \subseteq M_n \mid w_e = x_e/n \ \forall e \in E(T)) = \P_A(x_e < \tau_e(T) \ \forall e \in E(T))
    \end{equation*}
\end{lemma}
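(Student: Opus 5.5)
The plan is to rewrite the event $\{T \subseteq M_n\}$, given the weights on $E(T)$, as an event about the partitions $\Pi_n^{q,E(T)}$ evaluated at the finitely many times $x_e$. Then Lemma~\ref{L:partition_convergence} and Lemma~\ref{L:equiv_accept} finish the argument.

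\textbf{Step 1: reduce to the weights off $T$.} The weights $(w_e)_{e \notin E(T)}$ are independent of $(w_e)_{e \in E(T)}$. Conditioning on $w_e = x_e/n$ for $e\in E(T)$ therefore only fixes these values and leaves the law of $G_n(\cdot)\setminus E(T)$, and hence of $\Pi^{q,E(T)}_n$, unchanged.

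\textbf{Step 2: characterize $T \subseteq M_n$.} By Lemma~\ref{L:MST_cycle}, the tree $T$ lies in $M_n$ if and only if no edge $e\in E(T)$ is the maximum of a cycle. Any cycle through $e$ in $K_n$ decomposes into segments inside $T$ and excursions in $K_n\setminus E(T)$ joining vertices of $T$. I claim that $T\subseteq M_n$ is equivalent to
\begin{equation*}
\max_{f \in P_T(i,j)} x_f < R_n^T(i,j) \quad \text{for all } i\neq j \in [q],
\end{equation*}
up to null events from ties.

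For the direction ($\Leftarrow$), suppose a cycle has maximum edge $e\in E(T)$. Follow the cycle and contract each $T$-segment. Some excursion from $i$ to $j$ uses only edges of weight $<x_e/n$, and the tree path $P_T(i,j)$ must contain $e$. Otherwise the excursion together with the tree paths would close a cycle avoiding $e$; to handle this cleanly, pick the excursion crossing the cut $(T_e^-,T_e^+)$, which must exist since the cycle crosses back. This gives $R_n^T(i,j) < x_e \le \max_{P_T(i,j)} x_f$.

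For ($\Rightarrow$), if $R_n^T(i,j) < \max_{P_T(i,j)}x_f = x_e$, concatenating the optimal off-$T$ path with $P_T(i,j)$ yields a cycle whose unique maximum is $e$. The excursion path may touch other vertices of $T$, but that only creates a shorter cycle of the same type.

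By \eqref{eq:R_partition_equivalence}, the condition above reads: for each $e$, and all $i\in T_e^-$, $j\in T_e^+$, the vertices $i,j$ are in different blocks of $\Pi^{q,E(T)}_n(x_e)$. This is a cylinder event $E\bigl(\Pi^{q,E(T)}_n(x_{e_1}),\dots,\Pi^{q,E(T)}_n(x_{e_m})\bigr)$ in the partitions at the distinct sorted times $x_e$.

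\textbf{Step 3: pass to the limit.} The state space of the partitions is finite, so convergence in distribution from Lemma~\ref{L:partition_convergence} gives convergence of the probability of $E$. For the limiting partitions, $i$ and $j$ lie in different blocks of $\Pi^q(x_e)$ if and only if $x_e < \max\{A_i,A_j\}$. Hence the limiting event is $x_e<\min_{i\in T_e^-, j\in T_e^+}\max\{A_i,A_j\} = \tau_e(T)$ for all $e$, which matches the right-hand side of the claim. Lemma~\ref{L:equiv_accept} could be used instead via the path formulation.

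The main obstacle is the combinatorial equivalence in Step 2. The difficulty is handling cycles that revisit $T$ several times, and making sure the strict versus non-strict inequalities match up to probability-zero events; ties are excluded since the $x_e$ are distinct and the weights are continuous.
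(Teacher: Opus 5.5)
Your proposal is correct and follows the paper's proof essentially step for step. It uses the same characterization $T\subseteq M_n \iff \max_{P_T(u,v)}x_f<R_n^T(u,v)$ for all $u\neq v$, proved via the cut-crossing off-$T$ segment in one direction and via concatenation with the minimax path in the other, and then applies Lemma~\ref{L:partition_convergence} on a finite state space; the only cosmetic difference is that you rewrite the event in cut form at the times $x_e$ before passing to the limit, so the final appeal to Lemma~\ref{L:equiv_accept} is unnecessary, whereas the paper keeps the path form at the times $\max_{P_T(u,v)}x_f$ and invokes Lemma~\ref{L:equiv_accept} in the limit.
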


\begin{proof}
Condition throughout on $w_e = x_e/n$ for every $e \in E(T)$. We claim that, almost surely,
\begin{equation} \label{eq:equiv_R^T}
    T \subseteq M_n \iff \max\limits_{e \in P_T(u,v)} x_e <R_n^T(u,v)  \ \forall u\neq v.
\end{equation}
First, suppose that there exist $u,v \in V(T)$, $u \neq v$, and $e^* \in P_T(u,v)$ such that
\begin{equation*}
    x_{e^*} = \max_{e \in P_T(u,v)} x_e \geq R_n^T(u,v).
\end{equation*}
Then we almost surely have $x_{e^*} > R_n^T(u,v)$. Hence, there is a path from $u$ to $v$ in $K_n \setminus E(T)$ whose edges all have weight strictly less than $x_{e^*}/n$. Together with the edges of $P_T(u,v) \setminus \{e^*\}$, which also have weight strictly less than $x_{e^*}/n$, this path connects the endpoints of $e^*$ before Kruskal's algorithm examines $e^*$. Therefore, $e^* \notin M_n$.

Conversely, suppose that the edge $e \in E(T)$ is rejected by Kruskal's algorithm, and let $T_{e}^-$ and $T_{e}^+$ be the two connected components of $T \setminus \{e\}$. Almost surely, by Lemma~\ref{L:MST_cycle}, there exists a cycle $C$ in $K_n$ containing $e$ where each edge other than $e$ has weight less than $w_{e}$. Furthermore, there must exist vertices $u \in V(T_{e}^-)$ and $v \in V(T_{e}^+)$ that are connected using $C$ without any of the edges in $T$. Consequently, $x_{e} \geq R_n^T(u,v)$ and therefore 
\begin{equation*}
    \P_n ( T \subseteq M_n \mid w_e = x_e/n \ \forall e \in E(T)) = \P_n(\max\limits_{e \in P_T(u,v)} x_e < R_n^T(u,v) \  \forall u\neq v ).
\end{equation*}

Let $t_1< t_2 < \ldots < t_k$ be the unique values in the set $\{ \max_{e \in P_T(u,v)} x_e : u,v \in V(T)\}$. In view of \eqref{eq:R_partition_equivalence}, the condition on the right-hand side of \eqref{eq:equiv_R^T} is completely determined by the partitions
\begin{equation*}
    (\Pi_n^{q,E(T)}(t_1), \ldots,  \Pi_n^{q,E(T)}(t_k) ).
\end{equation*}
By Lemma~\ref{L:partition_convergence}, these partitions converge in distribution to
\begin{equation*}
    (\Pi^{q}(t_1), \ldots,  \Pi^{q}(t_k) ).
\end{equation*}
In the limiting partition at time $t_i$, the vertices $u$ and $v$ lie in different blocks if and only if $\max(A_u,A_v) > t_i$. Thus,
\begin{align*}
    \limn \P_n(\max\limits_{e \in P_T(u,v)} x_e <R_n^T(u,v)  \ \forall u\neq v ) &= \P_A(\max_{e \in P_T(u,v)} x_e < \max(A_u, A_v) \ \forall u \neq v) \\
    &=\P_A(x_e < \tau_e(T) \ \forall e \in E(T)),
\end{align*}
where the second equality is Lemma~\ref{L:equiv_accept}.
\end{proof}

To pass from this pointwise limit to the integral over the edge weights of $T$, we use the following uniform tail bound.

\begin{lemma}\label{L:connection-tail}
Fix $m<\infty$. There are constants $C_m,n_m>0$ such that, for $n \geq n_m$, all trees $T$ with $|E(T)| \leq m$, $u, v \in V(T)$, $u \neq v$, and $y \geq 0$
\begin{equation}\label{eq:minimax-tail}
        \P_n\bigl(R_n^T(u,v)> y\bigr)\le C_m e^{-y/16}.
\end{equation}
\end{lemma}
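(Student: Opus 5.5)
Since $R_n^T(u,v)>y$ means that $u$ and $v$ are not connected in $G_n(y/n)\setminus E(T)$, I will bound the probability that they are disconnected by an explicit argument. The argument uses a two-step exploration and paths of length two, and it avoids the giant-component asymptotics, which are not uniform in $y$. For $y$ bounded the claim is trivial after enlarging $C_m$, so fix $y\ge y_0$ large.

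The plan is to split the edge set into two independent rounds. Write $p=y/n$ and let $p_1$ satisfy $(1-p_1)^2=1-p$, so $p_1\ge p/2=y/(2n)$. Then $G_n(p)$ stochastically dominates the union of two independent copies of $G_n(p_1)$, and it suffices to show that $u,v$ are connected in that union with all $E(T)$ edges removed. The removed set $E(T)$ has at most $m$ edges and $V(T)$ has at most $m+1$ vertices. I also discard all edges touching $V(T)\setminus\{u,v\}$. This only shrinks the graph, and it leaves a vertex set $W$ of size at least $n-m-1$ on which no edges are deleted.

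The key steps are as follows.

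First, in round one, consider the neighbours of $u$ in $W$. Their number is $\mathrm{Bin}(|W|,p_1)$, with mean at least $y/2-o(1)$ for $n\ge n_m$. A Chernoff bound shows that fewer than $y/8$ neighbours occur with probability at most $e^{-cy}$, where one can take $c\ge 1/16$ after adjusting constants. The same holds for $v$, whose neighbour set in $W$ is built from disjoint edges. Call these sets $N_u,N_v$; with probability $1-2e^{-cy}$ each has size at least $y/8$. If they intersect, we are done.

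Second, use round two, which is independent of round one. Given disjoint $N_u,N_v$ with $|N_u|,|N_v|\ge y/8$, the $|N_u||N_v|\ge y^2/64$ edges between them are each open independently with probability $p_1$. So the probability that none is open is
\[
(1-p_1)^{y^2/64}\le \exp\bigl(-y^3/(128n)\bigr).
\]
This is only useful when $y\gtrsim n^{1/3}$, so for moderate $y$ I would instead connect through third vertices. For each $z\in W\setminus(N_u\cup N_v)$, the event that $z$ is adjacent to both $N_u$ and $N_v$ in round two has probability roughly $(1-e^{-y^2/(16n)})^2$, and these events are independent across $z$. The probability that no such $z$ exists is then about $\exp(-n\,(y^2/16n)^2\wedge 1)$, which is $\le e^{-y/16}$ once $y^4/n\gtrsim y$, that is, for $y\ge n^{1/3}$ — still a restriction.

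The main obstacle is the intermediate regime $y_0\le y\ll n^{1/3}$, where small neighbourhoods do not meet directly. There I would grow neighbourhoods for more generations. Replace the two-round split by $k$ rounds with $p_1\approx y/(kn)$, where $k$ is a fixed large constant such as $4$. Explore from $u$ and $v$ breadth-first with fresh rounds, and use Chernoff at each generation to show that the frontier grows by a factor $\ge y/(4k)$ with failure probability at most $e^{-cy}$. After $j$ generations the frontiers have size at least $(y/4k)^j$. Stop once the product of the two frontier sizes exceeds $n^{1+\delta}$, or once a frontier reaches size $\varepsilon n$; in the latter case use round-$k$ edges to another frontier of size $\ge cy$ directly. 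A final fresh round then joins them with failure probability $\exp(-p_1 n^{1+\delta})$, which is superexponentially small in $y$.

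The number of generations needed grows like $\log n/\log y$, which is not bounded. To handle this I would let the early generations use the bounded-rate independent exploration, conditioning on revealed edges only through the vertices already explored, so that fresh unexplored pairs remain independent. Each generation's failure probability is $e^{-c\,(\text{frontier size})}$, which decays geometrically in the generation, so the sum is dominated by the first step, $O(e^{-cy})$.

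Tracking constants with $c=1/16$, and taking $n_m$ large enough that $|W|\ge n/2$, yields \eqref{eq:minimax-tail} uniformly over all trees with at most $m$ edges.
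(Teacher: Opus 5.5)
Your route differs from the paper's. In outline it can be completed, but it is far heavier than needed, and the reason you give for it is a misdiagnosis.

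\textbf{What the paper does instead.} You discard the giant component because Theorem~\ref{T:ERG_sizes} is not uniform in $y$. The paper also bypasses Theorem~\ref{T:ERG_sizes}, but it keeps the giant by producing it non-asymptotically. Work on $W=[n]\setminus V(T)$, where $N=|W|\ge 2n/3$. If $G_N(y/n)$ had no component of size at least $3N/4$, some union $S$ of components would satisfy $N/4\le |S|\le 3N/4$. Then all $|S|(N-|S|)\ge 3N^2/16$ crossing edges would be closed. A union bound over $S$ bounds this by $2^N e^{-3yN^2/(16n)}\le e^{-n(y/12-\log 2)}$. That is $e^{-\Omega(yn)}$, hence below $e^{-y/16}$ for every $y$ beyond an absolute constant, uniformly in $n$. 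Conditional on that component $\mathcal{C}$, which depends only on edges inside $W$, each of $u,v$ fails to send an open edge into $\mathcal{C}$ with probability at most $(1-y/n)^{n/2}\le e^{-y/2}$. This one-step attachment is the only place a cost of order $e^{-\Theta(y)}$ arises, and no splitting into regimes of $y$ relative to $n^{1/3}$ is needed.

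\textbf{What your version still needs.} The intermediate regime $y_0\le y\ll n^{1/3}$ is still only an outline, and closing it requires the following.
\begin{enumerate}
\item Drop the fixed $k$-round splitting, since there are $\Theta(\log n/\log y)$ generations. Use an exploration in which each generation is generated only by edges incident to newly processed vertices.
\item Bound the failure probability of generation $j$ by $e^{-c\,y f_j}$, where $f_j$ is the current frontier size, not by $e^{-cy}$. Otherwise the union bound over generations is not uniform in $n$.
\item Control depletion and overshoot. For instance, process vertices one at a time and stop once each unprocessed frontier has size at least $\sqrt n$. The at least $n$ still-unrevealed edges between the two frontiers are then all closed with probability at most $e^{-y}$.
\item Count any open edge revealed between the two explorations as a success, so the explorations can run jointly without spoiling independence.
\end{enumerate}
With this bookkeeping your argument closes with room to spare in the constant $1/16$. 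For $K_n$, though, it rebuilds locally what the paper gets in a few lines from the fact that a linear-size component is missing only with probability $e^{-\Omega(yn)}$.
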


\begin{proof}
Choose $n_m$ so large that
\begin{equation*}
    N=n-|V(T)| \geq n -( m+1) \geq 2n/3 \quad \forall n \geq n_m.
\end{equation*}
By increasing the constant $C_m$, it suffices to consider sufficiently large $y$ with $y<n$ (since $R_n^T(u,v)\leq n$ whenever $V(T) \neq [n]$). Let $W=[n]\setminus V(T)$, then the graph induced by $W$ in $G_n(y/n)$ has law $G_N(y/n)$. We first bound the probability that $G_N(y/n)$ does not contain a large component. If every component of $G_N(y/n)$ has size less than $3N/4$, then some union of components $S$ satisfies
\begin{equation*}
    \frac{N}{4} \leq |S|\leq\frac{3N}{4}.
\end{equation*}
For this event to occur, all $|S|(N - |S|)$ edges between $S$ and $S^c$ must be closed. Consequently, a union bound over the possible choices of $S$ gives
\begin{align*}
\P_n\Big(|\mathcal{C}_1^N(y/n)|<\frac{3N}{4}\Big)
&\leq \sum_{N/4\leq s\leq3N/4} \binom Ns (1-y/n)^{s(N-s)} \\
&\leq (1-y/n)^{3N^2/16} \sum_{N/4\leq s\leq3N/4} \binom Ns \\
&\leq 2^N\exp\left(-\frac{3yN^2}{16 n}\right) \leq e^{-y/16},
\end{align*}
where we used the inequalities $1-x \leq e^{-x}$, $N/n\geq2/3$ and $s(N-s)\geq 3N^2/16$ for $N/4 \leq s \leq 3N/4$, and that $y$ is large enough.

Outside this event, there is a component $\mathcal{C} \subseteq W$ with
\begin{equation*}
    |\mathcal{C}|\geq\frac{3N}{4}\geq\frac{n}{2}.
\end{equation*}
If both $u$ and $v$ have an open edge to $\mathcal{C}$ at time $p = y/n$, then they are connected by a path in $G_n(y/n) \setminus E(T)$. Conditional on the component $\mathcal{C}$, the two sets of $|\mathcal{C}|$ incident edges are independent of the graph induced by $W$. Hence,
\begin{equation*}
\P_n\bigl(R_n^T(u,v)>y\bigr) \leq e^{-y/16}+2(1-y/n)^{n/2} \leq 3e^{-y/16}. \qedhere
\end{equation*} 
\end{proof}

We now prove \eqref{eq:forest_formula} when $F = T$ is a single tree with $m$ edges. Conditioning on the edge weights of $T$ and making the change of variables $x_e = nw_e$ gives
\begin{equation*}
    n^m \P_n( T \subseteq M_n) = \int_{(0,n)^m} \P_n( T \subseteq M_n \mid w_e = x_e/n  \ \forall e \in E(T) ) \prod_{e \in E(T)} dx_e.
\end{equation*}
We may view this as an integral over $(0,\infty)^m$ by setting the conditional probability equal to zero when one coordinate $x_e$ is larger than $n$. By the observation in \eqref{eq:equiv_R^T}, if Kruskal's algorithm accepts all edges in $T$, then it must be 
\begin{equation*}
    x_e < R^T_n(e^-, e^+) \qquad \forall e \in E(T).
\end{equation*}
Therefore, H\"older's inequality with all exponents equal to $m$, together with Lemma~\ref{L:connection-tail}, gives
\begin{align*}
    \P_n( T \subseteq M_n \mid w_e = x_e/n  \ \forall e \in E(T) ) &\leq \prod_{e \in E(T)} \P_n(  R^T_n(e^-, e^+) > x_e)^{1/m} \\
    &\leq C_m \exp\Big( -\frac{\sum_{e \in E(T)} x_e}{16m} \Big).
\end{align*}
The right-hand side is integrable over $(0,\infty)^m$. Therefore, the dominated convergence theorem and Lemma~\ref{L:pointwise_conv} give
\begin{align*}
   \limn n^m \P_n( T \subseteq M_n) &= \int_{(0,\infty)^m} \limn \P_n( T \subseteq M_n \mid w_e = x_e/n  \ \forall e \in E(T) ) \prod_{e \in E(T)} dx_e \\
   &=\int_{(0,\infty)^m} \P_A(x_e < \tau_e(T) \ \forall e \in E(T))\prod_{e \in E(T)} dx_e.
\end{align*}
Hence, by Tonelli's theorem
\begin{align*}
    \int_{(0,\infty)^m} \P_A(x_e < \tau_e(T) \ \forall e \in E(T))\prod_{e \in E(T)} dx_e &= \E_A[ \int_{(0,\infty)^m} \prod_{e \in E(T)} \mathbf{1}_{x_e < \tau_e(T)} dx_e ] \\
    &= \E_A[ \prod_{e \in E(T)} \tau_e(T)].
\end{align*}

\subsection{Proof of Theorem~\ref{T:forest_formula} for forests}
We first prove the convergence of normalized tree counts in \eqref{eq:tree_count}. Their concentration will then allow us to pass from a single tree to a forest.

\begin{lemma} \label{L:tree_count}
    Let $T$ be a fixed tree.
    \begin{enumerate}[i)]
        \item \label{enu:E_tree_count} The normalized expected number of copies of $T$ satisfies
        \begin{equation*}
            \E_n\Big[\frac{N_n(T)}{n}\Big] \nconv \Psi(T).
        \end{equation*}
        \item \label{enu:Var_tree_count} Its variance satisfies
        \begin{equation*}
            \Var\left(\frac{N_n(T)}{n}\right) \nconv 0.
        \end{equation*}
        \item \label{enu:uniform_tree_count} For every $k \in \N$, its $k$-th moment is uniformly bounded:
        \begin{equation*}
            \sup_n \E_n\Big[\left(\frac{N_n(T)}{n}\right)^k\Big] < \infty.
        \end{equation*}
    \end{enumerate}
\end{lemma}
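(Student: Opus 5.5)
Part \ref{enu:E_tree_count} is immediate from what we already have. With $q=m+1$ vertices, $\E_n[N_n(T)] = (n)_q\P_n(T\subseteq M_n)$, so $\E_n[N_n(T)/n] = \frac{(n)_q}{n^{q}}\, n^m\P_n(T\subseteq M_n) \to \Psi(T)$ by the single-tree case of \eqref{eq:forest_formula} just proved.

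For \ref{enu:Var_tree_count} and \ref{enu:uniform_tree_count}, I expand powers of $N_n(T)$ as sums over $k$-tuples of embeddings $(f_1,\dots,f_k)$ and group them by overlap pattern. The union of the images $f_1(T),\dots,f_k(T)$ is a fixed subgraph $H$ of $M_n$. Up to isomorphism and labelling there are only $O_{k,m}(1)$ such $H$, each with $r\le km$ edges, and each $H$ is a forest, since it is a subgraph of a tree. Let $H$ have $c$ components. Then $|V(H)| = r + c$, so the number of placements is at most $n^{r+c}$. Each placement has probability $\P_n(H\subseteq M_n)$.

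The key input for \ref{enu:uniform_tree_count} is a uniform bound $\P_n(H\subseteq M_n)\le C_H n^{-r}$ for an arbitrary fixed forest $H$. This gives $\E_n[N_n(T)^k] \le \sum_H C_H n^{c}$ with $c\le k$, hence $\E_n[(N_n(T)/n)^k] = O(1)$. To prove the bound, condition on the weights of $E(H)$, rescaled as $x_e = n w_e$. The argument behind \eqref{eq:equiv_R^T} works verbatim for a forest: if $H\subseteq M_n$, then for every $e\in E(H)$ we have $x_e < R_n^H(e^-,e^+)$, with $R_n^H$ defined by connection in $K_n\setminus E(H)$. The proof of Lemma~\ref{L:connection-tail} only used that a bounded number of vertices are removed, so it applies to $R_n^H$ with a constant depending on $|V(H)|\le k(m+1)$. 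Hölder's inequality with exponents $r$ then bounds the conditional probability by $C\exp(-\sum_e x_e/(16r))$. Integrating over $x\in(0,\infty)^r$ gives $n^r\P_n(H\subseteq M_n) = O(1)$.

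For \ref{enu:Var_tree_count}, take $k=2$. Pairs $(f_1,f_2)$ with overlapping images produce $H$ connected, so $c=1$, and they contribute $O(n)$ to $\E_n[N_n(T)^2]$, which is $O(1/n)$ after dividing by $n^2$. The disjoint pairs contribute $(n)_{2q}\P_n(T\sqcup T'\subseteq M_n)/n^2$, where $T\sqcup T'$ is two disjoint labelled copies of $T$. So it remains to show $n^{2m}\P_n(T\sqcup T'\subseteq M_n)\to\Psi(T)^2$; then $\E[(N_n/n)^2]\to\Psi(T)^2 = \lim(\E[N_n/n])^2$. I redo the proof of the single-tree case for the forest $F = T\sqcup T'$ with vertex set $[2q]$.

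Pointwise in the rescaled weights, the claim is that $F\subseteq M_n$ if and only if $\max_{e\in P}x_e < R_n^F(u,v)$ for all $u\ne v$ in the same component of $F$. For $u,v$ in different components, a connection is necessarily outside $F$ and creates no cycle through $F$ alone. More precisely, the rejection argument of Lemma~\ref{L:pointwise_conv} goes through: a cycle through $e$ with all other edges lighter, restricted after its last visit to each side, yields $u\in V(T_e^-)$ and $v\in V(T_e^+)$ connected outside $E(F)$. Lemma~\ref{L:partition_convergence} with $q\to 2q$ gives convergence to the $A$-partitions on $[2q]$ with i.i.d.\ $A_i$. The limiting conditional probability is therefore $\P_A(x_e<\tau_e(T)\ \forall e\in E(T))\,\P_A(x_{e'}<\tau_{e'}(T')\ \forall e'\in E(T'))$, by independence of the two disjoint families of $A$'s. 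Dominated convergence with the Hölder/tail bound above, followed by Tonelli, gives $\Psi(T)^2$.

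The main obstacle is this equivalence step for forests. It must be checked carefully that the set of conditions stays within components and still reduces, via Lemma~\ref{L:equiv_accept} applied componentwise, to the product form.
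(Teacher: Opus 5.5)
Parts (i) and (iii) are correct. Your (iii) takes a different route from the paper's. The paper bounds $N_n(T)\le (m+1)\sum_v\deg_{M_n}(v)^m$ and uses the degree moments coming from $\Psi(S_k)$. You instead expand over overlap patterns and prove $n^{|E(H)|}\P_n(H\subseteq M_n)=O(1)$ for fixed forests $H$. That bound is valid: it only uses the necessary condition $x_e<R_n^H(e^-,e^+)$, and the proof of Lemma~\ref{L:connection-tail} only needs $|V(H)|$ to be bounded. For (ii), the paper avoids forests altogether. It uses Efron--Stein: resampling one edge weight changes $N_n(T)$ with probability at most $4/n$, and then by at most $4m\Delta_n^{m-1}$, with the maximum degree truncated at $n^{\delta}$.

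\textbf{The gap in (ii).} The problem is at the step you flagged, and your resolution of it is wrong. The equivalence ``$F\subseteq M_n$ iff $\max_{e\in P_F(u,v)}x_e<R_n^F(u,v)$ for all $u\neq v$ in a common component'' fails for finite $n$. Consider the piece of the cycle between the last visit to $V(T_e^-)$ and the next visit to $V(T_e^+)$. It avoids $E(T)$, but it can pass through $V(T')$ and use edges of $T'$, so it is not a path in $K_n\setminus E(F)$.

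Here is a concrete instance. Let $e=\{a,b\}\in E(T)$ and $e'=\{c,d\}\in E(T')$. Suppose $w_{e'}$, $w_{\{a,c\}}$ and $w_{\{d,b\}}$ are all smaller than $w_e$, while every other edge of $K_n\setminus E(F)$ is heavier than $\max_{f\in E(F)}w_f$.
\begin{itemize}
\item The cycle $a,c,d,b$ has $e$ as its heaviest edge, so $e\notin M_n$.
\item The only light edges outside $E(F)$ join $T$ to $T'$. So no two vertices in a common component are connected in $K_n\setminus E(F)$ below time $\max_f x_f$, and your condition holds.
\end{itemize}
This configuration has positive probability for every $n$. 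Only the inclusion of $\{F\subseteq M_n\}$ in your event is valid. That inclusion gives you the domination and $\limsup\le\Psi(T)^2$, but not the matching lower bound.

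\textbf{How to repair it within your framework.} Given $x$, an edge $f\in E(F)$ is accepted iff $f^-$ and $f^+$ lie in different components of the following graph on $[2q]$. Its edges are the pairs lying in a common block of $\Pi_n^{2q,E(F)}(x_f)$, together with the edges $g\in E(F)\setminus\{f\}$ with $x_g<x_f$. (To see this, cut a lighter path at its $F$-edges.)
\begin{itemize}
\item Hence $\{F\subseteq M_n\}$ is still a function of the partitions at the times $x_f$. Lemma~\ref{L:partition_convergence} then identifies its limit as the same function applied to the $A$-partitions.
\item In that limit, detours through $T'$ are useless. On any route from $f^-$ to $f^+$, the first block-jump starts at some $u\in V(T_f^-)$ and the last one ends at some $v\in V(T_f^+)$. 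Both satisfy $A_u,A_v\le x_f$, so $u$ and $v$ already share a block.
\item Therefore the limiting event is exactly your same-component condition. Lemma~\ref{L:equiv_accept} applied to each component, independence, dominated convergence and Tonelli then give $\Psi(T)^2$.
\end{itemize}

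Once repaired, your argument proves \eqref{eq:forest_formula} directly for forests with any number of components. The paper instead deduces the forest case from the concentration in (ii) and (iii).
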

\begin{proof}
Let $m = |E(T)|$. Since $T$ has $m + 1$ vertices, vertex exchangeability gives
\begin{equation*}
    \frac{\E_n[N_n(T)]}{n} = \frac{(n)_{m+1}}{n} \P_n(T \subseteq M_n) =\frac{(n)_{m+1}}{n^{m+1}} (n^m \P_n(T \subseteq M_n)) \xrightarrow{n \to \infty} \Psi(T).
\end{equation*}

For item~\ref{enu:Var_tree_count}), we apply the Efron--Stein inequality, see e.g.\ \cite[Section~3.1]{BGM13}. Fix an edge $e \in E(K_n)$. First sample all edge weights to obtain $M_n$, and then resample only the weight of $e$ to obtain $M_n^{(e)}$. If $e$ belongs to neither $M_n$ nor $M_n^{(e)}$, then $M_n = M_n^{(e)}$. Hence,
\begin{equation} \label{eq:resample_bound}
    \P_n(M_n \neq M_n^{(e)}) \leq 2 \P_n(e \in M_n) = \frac{4}{n}.
\end{equation}
On the other hand, if $M_n \neq M_n^{(e)}$, then they can differ by at most two edges: exactly one of the two trees contains $e$, say $M_n$, and the other tree $M_n^{(e)}$ replaces $e$ with the edge $e^*$ that has the lowest weight connecting the two components of $M_n \setminus \{e\}$. 

We next bound the influence of resampling $e$ on $N_n(T)$. Write $\Delta_n := \max_{v \in V(K_n)}\deg_{M_n}(v)$ for the maximum degree in $M_n$. There are at most $2m\Delta_n^{m - 1}$ embeddings of $T$ into $M_n$ that contain $e$. Indeed, choose one of the $m$ edges of $T$ to map to $e$, choose one of its two orientations, and then expose the remaining $m - 1$ vertices in an order in which each new vertex is adjacent to an earlier one. At each step there are at most $\Delta_n$ choices. Let $\Delta_n^{(e)}$ and $N_n^{(e)}(T)$ denote the analogous quantities for $M_n^{(e)}$. It follows that, on $\{M_n \neq M_n^{(e)}\} \cap \{\Delta_n,\Delta_n^{(e)} \leq D\}$,
\begin{align*}
    |N_n(T) - N_n^{(e)}(T)| &\leq | \{\# \text{ embeddings of $T$ in $M_n$ containing $e$}  \} | \\
    &\qquad + | \{\# \text{ embeddings of $T$ in $M_n^{(e)}$ containing $e^*$}  \} | \leq 4m D^{m-1},
\end{align*}
where, by symmetry, we assumed that $e \in M_n$ and the replacement edge $e^*$ belongs to $M_n^{(e)}$. The Efron--Stein inequality therefore gives
\begin{align}
    \Var(N_n(T)) &\leq \frac{1}{2}\sum_{e \in E(K_n)} \E_n\bigl[(N_n(T) - N_n^{(e)}(T))^2\bigr] \nonumber \\
    &\leq 8m^2\sum_{e \in E(K_n)}\Bigl(D^{2m - 2}\P_n(M_n \neq M_n^{(e)}) + n^{2m - 2}\P_n(\Delta_n > D \text{ or } \Delta_n^{(e)} > D)\Bigr) \nonumber \\
    &\leq 4n^2m^2D^{2m - 2}\P_n(M_n \neq M_n^{(e)}) + 8m^2n^{2m}\P_n(\Delta_n > D), \label{eq:efron_var}
\end{align}
where $e$ is any fixed edge. 

For every fixed integer $k \geq 1$ and $n$ large enough depending on $k$, we have
\begin{equation*}
    \E_n[ (\deg_{M_n}(v))_{k}] =\E_n[(|B_{M_n}(1,1)|)_k]= (n-1)_k \P_n(S_k \subseteq M_n) \xrightarrow{n \to \infty} \Psi(S_k),
\end{equation*}
from which it follows that 
\begin{equation} \label{eq:deg_sup_bound}
    \sup_n \E_n[\deg_{M_n}(v)^k] < \infty.
\end{equation}
Hence, for every $k \in \N$, there exists a constant $C_k > 0$ such that
\begin{equation} \label{eq:degree_bound}
    \P_n( \Delta_n > D) \leq \sum_{v=1}^n \P_n(\deg_{M_n}(v) > D) \leq C_k n D^{-k}.
\end{equation} 
Using \eqref{eq:resample_bound} and \eqref{eq:degree_bound}, for any $k \in \N$, \eqref{eq:efron_var} is further upper bounded by
\begin{equation} \label{eq:var_combined}
    16 n m^2 D^{2m-2}  + 8m^2 n^{2m+1} C_k D^{-k}.
\end{equation}
Choose $\delta > 0$ and $k \in \N$ such that $(2m-2) \delta < 1$ and $k\delta > 2m$. Then letting $D = n^{\delta}$ in \eqref{eq:var_combined} shows that the variance of $N_n(T)$ is $o(n^2)$, proving item~\ref{enu:Var_tree_count}).

For item~\ref{enu:uniform_tree_count}), we first claim that
\begin{equation}
    N_n(T) \leq (m+1)\sum_{v\in V(K_n)}\deg_{M_n}(v)^m.
    \label{eq:embedding_count_v}
\end{equation}
Indeed, assign to each embedding $f:T\to M_n$ a vertex
$v_0\in f(V(T))$ having maximal degree in $M_n$ among the vertices
in $f(V(T))$. Fix $v_0\in V(K_n)$. There are $m+1$ choices for the vertex
$u_0\in V(T)$ satisfying $f(u_0)=v_0$. Enumerate the remaining vertices of $T$ as $u_1,\ldots,u_m$ in such a way that the parent of each $u_i$ appears before $u_i$. When choosing the vertex for $f(u_i)$, there are at most
\begin{equation*}
    \max_{0 \leq j < i}\deg_{M_n}\bigl(f(u_j)\bigr)
    \leq \deg_{M_n}(v_0)
\end{equation*}
possible choices. Repeating this for the remaining vertices and summing over all $v_0 \in V(K_n)$ gives \eqref{eq:embedding_count_v}.

For any $k \in \N$, Jensen's inequality\footnote{The deterministic version for convex functions.} gives
\begin{equation*}
    \big( \sum_{v\in V(K_n)}\deg_{M_n}(v)^m \big)^k = n^k\big( \frac{\sum_{v\in V(K_n)} \deg_{M_n}(v)^m}{n} \big)^k \leq n^{k-1}\sum_{v \in V(K_n) } \deg_{M_n}(v)^{km}.
\end{equation*}
Together with \eqref{eq:deg_sup_bound} and \eqref{eq:embedding_count_v} this yields
\begin{equation*}
    \E_n[N_n(T)^k] \leq (m+1)^k n^k \E_n[\deg_{M_n}(1)^{km}] \leq C_{k,m} n^k,
\end{equation*}
for some constant $C_{k,m} > 0$ depending only on $m = |E(T)|$ and $k$.
\end{proof}

Items~\ref{enu:E_tree_count}) and~\ref{enu:Var_tree_count}) imply that $N_n(T)/n$ converges to $\Psi(T)$ in probability. For any fixed $p < \infty$, choose an integer $k > p$. Item~\ref{enu:uniform_tree_count}) then gives a uniform bound on the $k$-th moment of $N_n(T)/n$, and hence uniform integrability of its $p$-th power. This proves the convergence in $L^p$ asserted in \eqref{eq:tree_count}.

\bigskip

Finally, let $F$ be a forest consisting of disjoint trees $(T_i)_{1 \leq i \leq k}$. Suppose that $T_i$ has $m_i$ edges, and let $m = \sum_{i=1}^k m_i$. Thus $F$ has $m$ edges and $m + k$ vertices. Let $N_n(F)$ be the number of embeddings of $F$ into $M_n$ for which the images of its components are vertex-disjoint. By exchangeability,
\begin{equation} \label{eq:forest_expectation}
    \frac{1}{n^k}\E_n[N_n(F)] = \frac{(n)_{m+k}}{n^k}\P_n(F \subseteq M_n) = (1+o_{k,m}(1)) n^{m} \P_n(F \subseteq M_n).
\end{equation}

The product $\prod_{i=1}^k N_n(T_i)$ counts all tuples $(f_i)_{1\leq i\leq k}$ of embeddings $f_i:T_i\to M_n$, without requiring their images to be vertex-disjoint. We now bound the number of tuples in $\prod_{i=1}^k N_n(T_i)$ that are not in $N_n(F)$. Fix a tuple $f=(f_i)_{1\leq i\leq k}$ and define a partition $\pi_f$ of $[k]$ by declaring $i$ and $j$ to be in the same block if there exist indices
\begin{equation*}
    i=i_0,i_1,\ldots,i_r=j
\end{equation*}
such that
\begin{equation*}
    V(f_{i_s}(T_{i_s}))\cap V(f_{i_{s+1}}(T_{i_{s+1}})) \neq\emptyset \qquad\text{for every }0\leq s<r.
\end{equation*}
If there is at least one overlapping embedding, then $\pi_f$ partitions $[k]$ into blocks $I_1, \ldots, I_\ell$, where $1 \leq \ell \leq k-1$. Furthermore, for each $1 \leq j \leq \ell$ the union
\begin{equation*}
    T_{I_j} := \bigcup_{i \in I_j} f_i(T_i)
\end{equation*}
is a finite connected tree with at most $m$ edges. For any such tree $T_{I_j}$, there are only finitely many ways in which the trees $(T_i)_{i \in I_j}$ can form $T_{I_j}$. Let $\mathcal{T}(m)$ be the finite set of trees with at most $m$ edges. Summing over all partitions of $[k]$ into at most $k - 1$ blocks, we obtain a constant $C_{k,m}$ such that
\begin{equation*}
    \prod_{i=1}^k N_n(T_i) - N_n(F) \leq C_{k,m}\sum_{\ell =1}^{k-1} \sum_{|\pi| = \ell} \prod_{I \in \pi}\sum_{T \in \mathcal{T}(m)} N_n(T),
\end{equation*}
where $|\pi|$ denotes the number of blocks in the partition $\pi$. For every partition $\pi$ with $|\pi| = \ell \leq k-1 $ we have
\begin{equation*}
    \frac{1}{n^\ell}\prod_{I \in \pi}\sum_{T \in \mathcal{T}(m)} N_n(T) = \Big( \sum_{T \in \mathcal{T}(m)} \frac{N_n(T)}{n} \Big)^\ell \xrightarrow{L^p} \Big( \sum_{T \in \mathcal{T}(m)} \Psi(T) \Big)^\ell \quad \forall 0 < p < \infty,
\end{equation*}
and since there are only finitely many such partitions (and $\ell \leq k-1$), it follows that 
\begin{equation*}
    \frac{1}{n^k}\E_n[\prod_{i=1}^k N_n(T_i) - N_n(F)] \nconv 0.
\end{equation*}

Therefore,
\begin{equation*}
    \frac{1}{n^k}\E_n[ N_n(F)] = \frac{1}{n^k}\E_n[\prod_{i=1}^k N_n(T_i)] + o_{k,m}(1).
\end{equation*}
As each $N_n(T_i)/n$ converges in $L^p$ to the constant $\Psi(T_i)$, equation \eqref{eq:forest_formula} follows from \eqref{eq:forest_expectation}.

% %%%%%%%%%%%%%%%%%%%%%%%%%%%%%%%%%%%%%%%%%%%
% %%%%%%%%%%%%%%%%%%%%%%%%%%%%%%%%%%%%%%%%%%%
% %%%%%%%%%%%%%%%%%%%%%%%%%%%%%%%%%%%%%%%%%%%

\section{Computing the function \texorpdfstring{$\Psi$}{Psi}} \label{S:compute_Psi}

We now turn to the calculation of $\Psi(T)$ for a general tree $T$.  Some routine details in the subsequent integral calculations are omitted. We begin by fixing the notation used throughout this section. Recall that
\begin{equation*}
\Psi(T) := \E_A\Big[\prod_{e \in E(T)} \tau_e(T)\Big],
\end{equation*}
where $\tau_e(T)$ is defined in \eqref{eq:def_tau}. The inverse function of $\theta(t)$ is
\begin{equation*}
\phi(t) := \frac{-\log(1 - t)}{t}, \qquad 0 < t < 1,
\end{equation*}
where we set $\phi(0) := 1$. The function $\phi$ is positive and increasing, with
\begin{equation*}
\phi'(t) = \frac{\frac{t}{1 - t} + \log(1 - t)}{t^2}.
\end{equation*}
The random variables in \eqref{eq:Law_A} used to define $\tau_e(T)$ may be constructed by setting
\begin{equation*}
A_v = \phi(U_v),
\end{equation*}
where $(U_v)_{v \in V(T)}$ are i.i.d.\ uniform random variables on $[0,1]$. We use this more convenient representation henceforth and omit subscripts from probabilities and expectations. 

Let $T_o$ be a tree rooted at $o$, and orient every edge away from the root. Set $T_o(o) = T_o$. For $x \in V(T_o) \setminus \{o\}$, let $T_o(x)$ be the descendant subtree rooted at $x$, obtained by removing the edge between $x$ and its parent and taking the component containing $x$. If $y_1,\ldots,y_d$ are the neighbors of $o$, then $T_o$ consists of the root $o$ with the branches $T_o(y_1),\ldots,T_o(y_d)$ attached to it. Define the non-increasing function
\begin{equation}
F_{T_o}(z) := \E\Big[\mathbf{1}_{\{U_v > z\ \forall v \in V(T_o)\}}\prod_{x \in V(T_o)}\phi\big(\min_{u \in V(T_o(x))} U_u\big)\Big].
\end{equation}

\subsection{General formula}

The following formula expresses $\Psi(T)$ in terms of the rooted branches obtained by choosing the vertex at which the minimum uniform variable is attained. It is the starting point to calculate $\Psi$ recursively.

\begin{lemma} \label{L:Psi_recursion}
    \begin{equation*}
        \Psi(T) = \sum_{o \in V(T)} \int_0^1 \prod_{y \sim o} F_{T_o(y)}(z) dz.
    \end{equation*}
\end{lemma}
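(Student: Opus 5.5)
Since $A_v=\phi(U_v)$ with $\phi$ increasing, every $\tau_e(T)$ can be rewritten in terms of the uniforms. For an edge $e$ we have $\min_{u\in V(T_e^\pm)}A_u=\phi(\min_{u\in V(T_e^\pm)}U_u)$, and taking the maximum of two such quantities gives
\begin{equation*}
\tau_e(T)=\phi\Big(\max\big\{\min_{V(T_e^-)}U,\ \min_{V(T_e^+)}U\big\}\Big).
\end{equation*}
Almost surely the $U_v$ are distinct, so we can split the expectation defining $\Psi(T)$ according to which vertex $o$ attains $\min_v U_v$:
\begin{equation*}
\Psi(T)=\sum_{o\in V(T)}\E\Big[\mathbf 1_{\{U_v>U_o\ \forall v\neq o\}}\prod_{e}\tau_e(T)\Big].
\end{equation*}

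Fix $o$, root $T$ at $o$, and orient edges away from $o$. Every edge $e$ then has the form $(\mathrm{parent}(x),x)$ for a unique $x\in V(T)\setminus\{o\}$. The side of $T\setminus\{e\}$ not containing $x$ contains $o$, so its minimum is $U_o$. The other side is exactly $T_o(x)$, and on the event that $o$ is the minimizer, its minimum exceeds $U_o$. Therefore $\tau_e(T)=\phi(\min_{u\in V(T_o(x))}U_u)$, and this product does not involve $U_o$ at all.

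Next we condition on $U_o=z$ and integrate $z$ over $[0,1]$ with density $1$. The non-root vertices split into the disjoint branches $T_o(y)$ for $y\sim o$. For $x$ in the branch $T_o(y)$, the descendant subtree of $x$ in $T_o$ equals the descendant subtree of $x$ inside the rooted tree $T_o(y)$. The indicator $\{U_v>z\ \forall v\neq o\}$ likewise factorizes over the branches. Since the $U$'s on different branches are independent, the conditional expectation equals
\begin{equation*}
\prod_{y\sim o}\E\Big[\mathbf 1_{\{U_v>z\ \forall v\in V(T_o(y))\}}\prod_{x\in V(T_o(y))}\phi\big(\min_{u\in V(T_o(y)(x))}U_u\big)\Big]=\prod_{y\sim o}F_{T_o(y)}(z),
\end{equation*}
where the vertex $x=y$ itself is included, corresponding to the edge $(o,y)$. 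Integrating over $z$ and summing over $o$ gives the claimed formula.

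The only step that requires care is the bookkeeping identity matching edges of $T$ with non-root vertices and their descendant subtrees. In particular, one must check that the root of each branch $T_o(y)$ appears in the product defining $F$, with $T_o(y)(y)=T_o(y)$. Measure-theoretic issues are trivial: all quantities are nonnegative, so Tonelli's theorem justifies conditioning and interchange even though $\phi$ is unbounded near $1$. Finiteness of the result is not needed for the identity itself, and in any case it is already known from Theorem~\ref{T:forest_formula}.
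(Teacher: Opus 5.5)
Your proof is correct and is essentially the paper's argument. You split according to the vertex $o$ minimizing the $U_v$, note that on this event $\tau_e(T)=\phi(\min_{u\in V(T_o(x))}U_u)$ for the edge above $x$, and factor over the independent branches $T_o(y)$. The only difference is cosmetic: you condition on $U_o=z$ (density $1$) and keep the indicator $\{U_v>z\}$ inside, which gives $F_{T_o(y)}$ directly, whereas the paper conditions on $(O,Z)$, uses the density $q(1-z)^{q-1}$ and the conditional uniform law on $(z,1)$, and then cancels the resulting $(1-z)$ factors.
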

\begin{proof}
    Let $Z := \min_{v \in V(T)} U_v$, and let $O$ be the almost surely unique vertex attaining this minimum. Fix $o \in V(T)$ and orient the edges of $T$ away from $o$. If $e$ joins a non-root vertex $x$ to its parent, then, on the event $\{O = o\}$,
    \begin{equation*}
        \tau_e(T) = \phi\left(\min_{u \in V(T_o(x))} U_u\right).
    \end{equation*}
    Indeed, the component of $T \setminus \{e\}$ containing $o$ has minimum $U_o$, whereas every uniform variable in the descendant component $T_o(x)$ is larger than $U_o$.

    Write $q := |V(T)|$. The variables $Z$ and $O$ are independent, $O$ is uniform on $V(T)$, and $Z$ has density $q(1 - z)^{q - 1}$ on $(0,1)$. Conditional on $\{O = o, Z = z\}$, the uniform variables in the branches $(T_o(y))_{y \sim o}$ are independent, and within each branch they are i.i.d.\ uniform on $(z,1)$. Therefore,
    \begin{equation*}
        \E\Big[\prod_{e \in E(T)} \tau_e(T) \ \big| \ \,O = o, Z = z\Big] = \prod_{y \sim o}\frac{F_{T_o(y)}(z)}{(1 - z)^{|V(T_o(y))|}}.
    \end{equation*}
    Since $\sum_{y \sim o}|V(T_o(y))| = q - 1$, conditioning on $Z$ and $O$ gives
    \begin{align*}
        \Psi(T) &= \sum_{o \in V(T)} \frac{1}{q}\int_0^1 q(1 - z)^{q - 1}\prod_{y \sim o}\frac{F_{T_o(y)}(z)}{(1 - z)^{|V(T_o(y))|}}\,dz \\
        &= \sum_{o \in V(T)} \int_0^1 \prod_{y \sim o} F_{T_o(y)}(z)\,dz. \qedhere
    \end{align*}
\end{proof}

By splitting into the branches of the root's neighbors, the functions $F_{T_o}$ may also be computed recursively. In the following, we define $(1-z) \phi(z) = 0$ at $z=1$.
\begin{lemma} \label{L:F_recursion}
 Suppose that $o$ has neighbors $y_1, \ldots, y_d$. Then
\begin{equation*}
    F_{T_o}(z) = (1 - z)\phi(z)\prod_{i=1}^d F_{T_o(y_i)}(z) + \int_z^1 (1 - t)\phi'(t)\prod_{i=1}^d F_{T_o(y_i)}(t)\,dt.
\end{equation*}
\end{lemma}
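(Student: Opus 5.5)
Decompose according to the value of the root variable $U_o$. By definition,
\begin{equation*}
F_{T_o}(z) = \E\Big[\mathbf{1}_{\{U_v > z\ \forall v\}}\,\phi\big(\min_{u \in V(T_o)} U_u\big)\prod_{x \neq o}\phi\big(\min_{u \in V(T_o(x))} U_u\big)\Big],
\end{equation*}
since $T_o(o) = T_o$. For every $x \neq o$, the subtree $T_o(x)$ lies inside exactly one branch $T_o(y_i)$, and $T_o(x)$ is also the descendant subtree of $x$ in $T_o(y_i)$ rooted at $y_i$. Hence the product over $x \neq o$ factors as $\prod_{i=1}^d G_i$, where $G_i$ is the product of $\phi(\min)$ over the vertices of $T_o(y_i)$. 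This $G_i$ depends only on the uniforms in that branch. The indicator also splits as $\mathbf{1}_{\{U_o>z\}}\prod_i \mathbf{1}_{\{U_v>z\ \forall v\in T_o(y_i)\}}$.

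The remaining factor couples everything through
\begin{equation*}
\phi\big(\min(U_o, M_1,\ldots,M_d)\big), \qquad M_i := \min_{v \in T_o(y_i)} U_v.
\end{equation*}
To decouple it, write $\phi$ via the fundamental theorem of calculus. On $\{\text{all } U_v > z\}$ we have $m := \min_{v} U_v > z$, so
\begin{equation*}
\phi(m) = \phi(z) + \int_z^1 \phi'(t)\,\mathbf{1}_{\{t < m\}}\,dt,
\end{equation*}
using that $\phi$ is increasing and absolutely continuous on $[z,m]$. Moreover $\{t<m\}$ intersected with $\{\text{all } U_v>z\}$ is simply $\{\text{all } U_v > t\}$ for $t \geq z$.

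Substitute this into the expectation and use Tonelli, which is justified because all terms are nonnegative as $\phi' \geq 0$. The first term gives $\phi(z)\,\P(U_o>z)\prod_i F_{T_o(y_i)}(z)$, by independence of $U_o$ and the disjoint branches. Here $\P(U_o>z)=1-z$. The second term gives
\begin{equation*}
\int_z^1 \phi'(t)\,\P(U_o>t)\prod_i \E\big[\mathbf{1}_{\{U_v>t\ \forall v\in T_o(y_i)\}}G_i\big]\,dt = \int_z^1 (1-t)\phi'(t)\prod_i F_{T_o(y_i)}(t)\,dt.
\end{equation*}
This is the claimed formula, with the convention $(1-z)\phi(z)=0$ at $z=1$, which is consistent with the indicator being zero there.

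The only delicate point is integrability near $t=1$, where $\phi$ blows up logarithmically and $\phi'(t)\sim 1/(1-t)$. We need $(1-t)\phi'(t)$ bounded and each $F$ finite. Finiteness of $F$ follows from $\phi(\min)\leq \phi(\cdot)$ having all moments under uniform variables. Since the integrands are nonnegative, Tonelli applies without further justification.
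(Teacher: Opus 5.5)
Your proposal is correct and follows the paper's proof essentially verbatim. The paper also writes $\phi(M) = \phi(z) + \int_z^1 \mathbf{1}_{\{M>t\}}\phi'(t)\,dt$ on $\{M>z\}$, substitutes this into the definition of $F_{T_o}$, and factors the result using the independence of $U_o$ and the disjoint branches. Your Tonelli/nonnegativity justification and the finiteness bound via $\phi(\min_{u \in V(T_o(x))} U_u) \leq \phi(U_x)$ are details the paper leaves implicit.
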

\begin{proof}
    Write $M := \min_{v \in V(T_o)} U_v$. On the event $\{M > z\}$,
    \begin{equation} \label{eq:phi_split}
        \phi(M) = \phi(z) + \int_z^1 \mathbf{1}_{\{M > t\}}\phi'(t)\,dt.
    \end{equation}
    As the branches $T_o(y_i)$ are disjoint, for $a \in V(T_o(y_i)),\, b \in V(T_o(y_j))$, $i \neq j$, the random variables $\phi(\min_{v\in V(T_o(a))} U_v)$ and $\phi(\min_{v\in V(T_o(b))} U_v)$ are independent. Inserting \eqref{eq:phi_split} into the definition of $F_{T_o}(z)$ gives
    \begin{align*}
        F_{T_o}(z) &= \E \big[ \mathbf{1}_{U_v > z \ \forall v \in V(T_o)} \phi(M) \prod_{x \in V(T_o) \setminus \{o\}} \phi(\min_{v \in V(T_o(x))} U_v) \big] \\
        &= \P(U_o > z) \phi(z) \prod_{y_i \sim o}\E \big[ \mathbf{1}_{U_v > z \ \forall v \in V(T_o(y_i))} \prod_{x \in V(T_o(y_i))} \phi(\min_{v \in V(T_o(x))} U_v) \big] \\
        &\qquad + \int_z^1 \P(U_o > t) \phi'(t) \prod_{y_i \sim o}\E\big[ \mathbf{1}_{U_v > t \ \forall v \in V(T_o(y_i))} \prod_{x \in V(T_o(y_i))} \phi(\min_{v \in V(T_o(x))} U_v) \big] dt.\\
        &=(1-z) \phi(z) \prod_{i=1}^d F_{T_o(y_i)}(z) + \int_z^1 (1-t)\phi'(t) \prod_{i=1}^d F_{T_o(y_i)}(t) dt
    \end{align*}
    as required.
\end{proof}
\noindent Using $(1 - z)\phi(z) + z(1 - z)\phi'(z) = 1$ and interchanging the order of integration, we also obtain
\begin{align} 
    \int_0^1 F_{T_o}(z)\,dz &= \int_0^1\left((1 - z)\phi(z) + z(1 - z)\phi'(z)\right)\prod_{i=1}^d F_{T_o(y_i)}(z)\,dz \nonumber \\
    &= \int_0^1 \prod_{i=1}^d F_{T_o(y_i)}(z)\,dz. \label{eq:integral_recursion}
\end{align}
We note the following useful consequence. If the chosen root $o$ is a leaf, then \eqref{eq:integral_recursion} can be iterated, successively removing vertices until a branch point is reached. This observation will be very useful for paths.

\subsection{Recursive calculations}
Lemmas~\ref{L:Psi_recursion} and~\ref{L:F_recursion} (and equation \eqref{eq:integral_recursion}) give the following recursive procedure for calculating $\Psi(T)$ for a specific tree $T$:
\begin{enumerate}
    \item Compute $F_{T_o}(z)$ for the required smaller rooted trees.
    \item Use Lemma~\ref{L:F_recursion} to obtain $F_{T_o}(z)$ for the larger rooted trees.
    \item Integrate to obtain $\Psi(T)$.
\end{enumerate}
The base case is the rooted tree consisting only of its root, which we denote by $L$, suppressing the notational dependence on the root. It satisfies
\begin{equation} \label{eq:F_leaf}
    F_{L}(z) = \E[\mathbf{1}_{U_o > z} \phi(U_o)]=\int_z^1\phi(t) dt,
\end{equation}
and can be considered as a building block for the recursion.  For $S_1 = P_1$, which consists of a single edge,
\begin{equation*}
    \Psi(S_1) = 2\int_0^1 F_L(z)\,dz = 2\int_0^1\int_z^1 \phi(t)\,dt\,dz = 2\int_0^1 t\phi(t)\,dt = 2.
\end{equation*}
If we root $S_1$ at either endpoint, Lemma~\ref{L:F_recursion} and integration by parts give (see also Appendix~\ref{S:Appendix})
\begin{align*}
    F_{S_1}(z) &= \frac{1}{2}F_L(z)^2 + \int_z^1 (1 - t)\phi(t)^2\,dt
\end{align*}
and by \eqref{eq:integral_recursion}
\begin{equation} \label{eq:leaf_integral}
    \int_0^1 F_{S_1}(z)dz = \int_0^1 F_L(z)dz = 1.
\end{equation}
For $S_2$, the root attaining the minimum is either the center, which produces two copies of $L$, or one of the two leaves, which produces a copy of the rooted tree $S_1$. In Appendix~\ref{S:Appendix}, we calculate
\begin{equation*}
    \Psi(S_2) =  \int_0^1F_L(z)^2dz + 2\int_0^1 F_{S_1}(z) dz = 8 - 4 \zeta(3).
\end{equation*}
We remark that Corollary~\ref{C:var_deg} now follows since
\begin{align*}
    \Var(\deg_{M_n}(1)) &=  \E_n[\deg_{M_n}(1)^2] - \E_n[\deg_{M_n}(1)]^2 \\
    &= (n-1)_2 \P_n(S_2 \subseteq M_n) + (n-1)\P_n(S_1 \subseteq M_n) - ( (n-1) \P_n(S_1 \subseteq M_n))^2 \\
    &\nconv \Psi(S_2) + \Psi(S_1) - \Psi(S_1)^2 =  6 - 4 \zeta(3).
\end{align*}

The same recursion gives explicit values for other small trees, and further examples are collected in Appendix~\ref{S:Appendix}. We remark that calculations become tedious very quickly, and the author is not aware of any general closed formula for arbitrary tree shapes. 

\subsection{Stars}
We now apply the recursion to the stars $(S_k)_{k \geq 1}$ where $S_k$ has $k$ edges and is implicitly rooted at its center. If the center of $S_k$ attains the minimum, the contribution is the product of $k$ copies of $F_L$. If one of the $k$ leaves attains the minimum, the remaining branch is $S_{k - 1}$ rooted at its center. Hence,
\begin{align}
    \Psi(S_k) &= \int_0^1 F_L(z)^k dz + k \int_0^1F_{S_{k-1}}(z) dz  \nonumber \\
    &=\int_0^1 F_L(z)^k dz + k \int_0^1F_L(z)^{k-1} dz \label{eq:star_formula},
\end{align}
where the second equality follows from \eqref{eq:integral_recursion}, since $S_{k - 1}$ rooted at its center has $k - 1$ branches, each equal to $L$. We claim that
\begin{equation*}
    \int_0^1 F_L(z)^k\,dz = (1 + o(1))\frac{\zeta(2)^{k + 1}}{k} \qquad \text{as } k \to \infty.
\end{equation*}
Indeed, $F_L(0) = \zeta(2)$, $F_L'(0) = -1$, and $F_L(z) < \zeta(2)$ for every $z > 0$, so that a form of Laplace's method (see e.g.\ \cite[Proposition~3.18]{Cos09}, where in their notation $f(z)=1$, $g(z) = \log F_L(z)$) gives the displayed asymptotic. Applying the asymptotic with $k$ and $k - 1$ in \eqref{eq:star_formula} yields $\Psi(S_k) = (1 + o(1))\zeta(2)^k$ as required in \eqref{eq:star_asymp} of Proposition~\ref{P:star_path}.

\subsection{Paths}
Let $P_k$ be the path with $k$ edges, with vertices labeled by $[k + 1]$. Unless specified, we implicitly root $P_k$ at one of the endpoints. If the minimum uniform variable occurs at $2 \leq i \leq k$, removing the vertex $i$ splits the path into two branches with $i - 2$ and $k - i$ edges. If it occurs at either endpoint, there is one branch equal to $P_{k-1}$. Lemma~\ref{L:Psi_recursion} therefore gives
\begin{equation} \label{eq:path_formula}
    \Psi(P_k) = 2\int_{0}^1 F_{P_{k-1}}(z)dz +\sum_{i=2}^k \int_0^1  F_{P_{i-2}}(z)  F_{P_{k-i}}(z) dz.
\end{equation}
Repeated application of \eqref{eq:integral_recursion} shows
\begin{equation*}
    \int_{0}^1 F_{P_{n}}(z)dz =  \int_{0}^1 F_{P_{n-1}}(z)dz = \ldots =  \int_{0}^1 F_{P_{0}}(z)dz =  \int_{0}^1 F_{L}(z)dz = 1.
\end{equation*}
We may therefore view $F_{P_{n}}(z)$ as a density\footnote{General tree shapes with branch points, or the path not rooted at one of the endpoints, are not necessarily normalized to integrate to $1$.} on $(0,1)$. By Lemma~\ref{L:F_recursion} for $n\geq 1$
\begin{equation} \label{eq:path_recursion}
    F_{P_{n}}(z) = (1-z) \phi(z) F_{P_{n-1}}(z) + \int_z^1 (1-t) \phi'(t) F_{P_{n-1}}(t) dt.
\end{equation}
We estimate the convolution-type integrals in \eqref{eq:path_formula} through the convergence of a Markov chain. We expect there to be a more direct analytic approach; however, we believe there is merit in showing this probabilistic approach.

Consider the Markov chain $(X_n)_{n \geq 0}$ on $(0,1)$, where $X_0$ has density $F_{P_0}(z) = F_L(z)$ and for $n\geq 0$
\begin{equation*}
    X_{n+1} =
    \begin{cases}
        X_n & \text{with probability } (1-X_n)\phi(X_n), \\
        U'_n X_n  & \text{with probability } X_n(1-X_n)\phi'(X_n)
    \end{cases}
\end{equation*}
where $U'_n$ are i.i.d.\ uniforms on $[0,1]$. Note that $X_{n+1} \leq X_n$ and
\begin{equation*}
    (1-z)\phi(z) +   z(1-z)\phi'(z) = 1 \quad \forall \ 0 < z < 1.
\end{equation*}
Equivalently, we have for any Borel $A \subseteq (0,1)$ and $t \in (0,1)$
\begin{align*}
    \P(X_{n+1} \in A \mid X_n = t) &=   (1-t) \phi(t) \mathbf{1}_A(t) + t (1-t)\phi'(t) \P(t U_n' \in A) \\
    &= (1-t) \phi(t) \mathbf{1}_A(t) + (1-t)\phi'(t) \int_0^t \mathbf{1}_A(z) dz.
\end{align*}
We claim that by induction $X_n$ has density $F_{P_n}(z)$. Indeed, for any Borel $A \subseteq (0,1)$, Lemma~\ref{L:F_recursion} gives
\begin{align*}
    \P(X_{n+1} \in A) &= \int_0^1 \P(X_{n+1} \in A \mid X_n = t) F_{P_n}(t) dt \\
    &= \int_0^1 \Big( (1-t) \phi(t) \mathbf{1}_A(t) + (1-t)\phi'(t) \int_0^t \mathbf{1}_A(z) dz \Big) F_{P_n}(t) dt \\
    &= \int_A(1-t) \phi(t)F_{P_n}(t) dt + \int_0^1 \int_z^1 (1-t)\phi'(t)F_{P_n}(t) \mathbf{1}_A(z) dt dz \\
    &=\int_A \Big((1-z) \phi(z)F_{P_n}(z) + \int_z^1 (1-t)\phi'(t)F_{P_n}(t) \mathbf{1}_A(z) dt \Big) dz \\
    &= \int_A F_{P_{n+1}}(z) dz.
\end{align*}

The following scaling limit is the key input for the path asymptotics.
\begin{lemma} \label{L:markov_conv}
    As $n \to \infty$,
    \begin{equation*}
    nX_n \xrightarrow{(d)} \mathrm{Exp}(1/2),
    \end{equation*}
    where $\mathrm{Exp}(1/2)$ denotes the exponential distribution with rate $1/2$.
\end{lemma}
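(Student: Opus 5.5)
The plan is to analyse the jump structure of the chain near $0$. Near $z=0$ we have $\phi(z) = 1 + z/2 + z^2/3 + O(z^3)$. The jump probability is
\begin{equation*}
p(z) := z(1-z)\phi'(z) = 1-(1-z)\phi(z),
\end{equation*}
and expanding $(1-z)\phi(z) = 1 - z/2 - z^2/6 + O(z^3)$ gives $p(z) = z/2 + O(z^2)$ as $z \to 0$. A jump multiplies $X_n$ by an independent uniform. Write $Y_n := -\log X_n$, which is nondecreasing. The chain waits a geometric time with success probability $p(X_n)\approx X_n/2$ and then increases $Y$ by an $\mathrm{Exp}(1)$ amount. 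So the chain is a time change of a random walk with $\mathrm{Exp}(1)$ increments, and the waiting time in state $x$ is roughly $\mathrm{Geom}(x/2)$, of mean $2/x$.

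\textbf{Direct computation for the limit.} I will work directly with the distribution function $G_n(s) := \P(nX_n > s)$, or better, with the Laplace-type quantity $H_n(\lambda):=\E[(1-\lambda/n)^{\ldots}]$. The cleanest route I would try first is a stationarity / generator argument for the rescaled process $V_n := nX_n$. Since $X_n$ is decreasing and eventually small, consider the time-inhomogeneous chain $V_k^{(n)} := kX_k$. From $V=kx$, in one step it becomes $(k+1)x$ with probability $1-p(x)$, or $(k+1)xU$ with probability $p(x)$. For a test function $f$, the expected change is
\begin{equation*}
\E[f(V_{k+1})-f(V_k)\mid X_k=x] \approx \frac1k\Big( v f'(v) + \frac{v}{2}\big(\E f(vU)-f(v)\big)\Big), \qquad v=kx,
\end{equation*}
up to $O(k^{-2})$ errors when $v$ stays bounded. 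This is a discretisation in logarithmic time $\log k$ of the piecewise-deterministic Markov process with generator
\begin{equation*}
\mathcal{L}f(v) = v f'(v) + \frac{v}{2}\int_0^1 \bigl(f(vu)-f(v)\bigr)\,du .
\end{equation*}
For $f(v)=e^{-\lambda v}$ against the $\mathrm{Exp}(1/2)$ density $\tfrac12 e^{-v/2}$, we get $\mathcal L^* \rho = 0$; a quick check via the integral $\int \mathcal{L}f\,\rho = 0$ shows that $\mathrm{Exp}(1/2)$ is invariant. So the claim reduces to showing that $V_k$ converges to the unique invariant law of this PDMP as $\log k\to\infty$.

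\textbf{Key steps.}
\begin{enumerate}
\item Show that $X_n \to 0$ a.s.\ and that $nX_n$ is tight. For the upper tail: while $X_k > s/k$, each step jumps with probability at least roughly $s/(2k)$, so surviving from $k/2$ to $k$ costs about $e^{-s/4}$. For the lower tail: a jump from $x$ lands below $\epsilon x$ with probability $\epsilon$, and the waiting time after landing at $y$ is of order $1/y$, so $kX_k$ cannot stay small.
\item Prove ergodicity of the limiting PDMP. It drifts outward linearly in log time and resets multiplicatively at rate $v/2$; a Lyapunov function such as $v+1/v$ together with a coupling of the uniforms gives uniqueness of the invariant measure.
\item Compare the discrete chain over blocks $k\in[K,K e^{T}]$ with the PDMP run for time $T$. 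Using Taylor bounds on $p$, the errors are $O(1/K)$ per unit of log time on the tight set, so a standard approximation of Markov chains by a Feller process applies.
\end{enumerate}

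\textbf{Main obstacle.} The delicate part is uniformity: the initial law (density $F_L$) is not near $0$, and the early steps are not in the small-$x$ regime. Because of the tightness in step 1, however, after $K$ steps we have $X_K = O_P(1/K)$. Ergodicity of the PDMP then erases the initial condition. The hardest estimate is the lower-tail tightness of $kX_k$ near $0$, since there are rare deep jumps with long waits. For this I would use the explicit bound $\P(U<\epsilon)=\epsilon$ together with the fact that the next jump occurs after a time of order $1/y$, which is comparable to the current $k$ only in log scale.
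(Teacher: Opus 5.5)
Your generator computation is correct ($p(z)=z/2+O(z^2)$, and $\mathrm{Exp}(1/2)$ is indeed invariant for $\mathcal{L}$). A logarithmic-time PDMP limit for $V_k=kX_k$ plus ergodicity is a legitimate route in principle, but as written it is a program rather than a proof. Everything rests on steps 1--3, which are only asserted, and some of the tools you name fail.

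In step 2, $v+1/v$ is not a Lyapunov function, since $\mathcal{L}(1/v)=-\frac1v+\frac v2\bigl(\int_0^1\frac{du}{vu}-\frac1v\bigr)=+\infty$: the multiplicative jump $v\mapsto vU$ puts too much mass near $0$. You would need $v^{-a}$ with $a<1$, for which $\mathcal{L}v^{-a}=-av^{-a}+\frac{a}{2(1-a)}v^{1-a}$.

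Step 3 is not off-the-shelf either. In the $V$-variable the chain is time-inhomogeneous, drift and jump rate are unbounded, and $\mathcal{L}$ does not map $C_c^\infty((0,\infty))$ into $C_0$ (beyond the support of $f$, $\mathcal{L}f\equiv\frac12\int_0^\infty f$). So you need a localization argument, and ergodic convergence uniform over the laws of $V_K$. That uniformity in turn needs the lower-tail bound, which you only argue heuristically.

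Your upper-tail heuristic is also a non sequitur as phrased. A jump from $X_j>s/k$ need not take the chain below $s/k$, so a lower bound on the jump probability $p(X_j)$ does not bound the probability of staying above $s/k$.

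The missing idea, which makes all of this unnecessary, is the exact probability of staying above a \emph{fixed} level. For $0<x<t$, $\P(X_{n+1}>x\mid X_n=t)=1-p(t)\frac{x}{t}=1-x(1-t)\phi'(t)$, with $(1-t)\phi'(t)=\frac12+\frac t6+\cdots\in[\frac12,1)$. So once the chain is below $\delta$, the per-step chance of crossing below $x$ is $x/2$ up to a factor $1\pm2\varepsilon$, regardless of its current position.

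This is the paper's proof, and it is exactly your abandoned first instinct of working directly with $\P(nX_n>s)$. After showing $X_n\to0$ a.s., one works on $A_K=\{X_K\le\delta\}$ and iterates at the fixed level $x=y/n$. This squeezes $\P(X_n>\frac yn,A_K)$ between the two bounds $\bigl(1-\frac{y}{2n}(1\pm2\varepsilon)\bigr)^{n-K}\P(X_K>\frac yn,A_K)$. Letting $n\to\infty$, then $K\to\infty$, then $\varepsilon\to0$ gives $\P(nX_n>y)\to e^{-y/2}$.

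No process-level limit, ergodicity or separate tightness argument is needed. Even the lower tail you call the hardest estimate follows in one line from the same formula: since $(1-t)\phi'(t)\le1$ and the density of $X_0$ is at most $\zeta(2)$, we get $\P(X_k<x)\le(\zeta(2)+k)x$.
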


\begin{proof}
    Since $X_{n + 1} \leq X_n$, there exists an almost sure limit $X_\infty \in [0,1]$. Fix $\eta > 0$ and suppose $X_\infty \geq \eta$. Whenever $X_n \in [\eta,1]$, the conditional probability that $X_{n + 1} \leq X_n/2$ is
    \begin{equation*}
        \frac{1}{2}X_n(1 - X_n)\phi'(X_n),
    \end{equation*}
    which is bounded below by a positive constant depending only on $\eta$. Consequently, such a halving will occur infinitely often, contradicting that $X_n \geq X_\infty \geq \eta$. Hence, $X_n \to 0$ almost surely. 
    
    For any $t \in (0,1)$ and $0 < x < t$, we have
    \begin{align*}
        \P(X_{n+1} > x \mid X_n = t) &= (1-t)\phi(t) + (1-t)\phi'(t) (t-x) \\
        &=1- x(1-t)\phi'(t) 
    \end{align*}
    and therefore
    \begin{equation} \label{eq:markov_recursion}
        \P(X_{n+1} > x \mid X_n) = \mathbf{1}_{X_n >x} ( 1- x(1-X_n)\phi'(X_n)).
    \end{equation}
    
    Notice that
    \begin{equation*}
        (1-t)\phi'(t) = \frac{1}{t} + \frac{1-t}{t^2} \log(1-t) = \frac{1}{2} + O(t) \quad \text{as } t\downarrow 0.
    \end{equation*}
    In particular, for any fixed $\varepsilon >0$ we may choose $\delta > 0$ such that 
    \begin{equation*}
        \frac{1}{2} - \varepsilon \leq (1-t)\phi'(t) \leq \frac{1}{2} + \varepsilon, \qquad 0 < t \leq \delta. 
    \end{equation*}
    For fixed $K \in \N$, define $A_K := \{ X_K \leq \delta\}$. By the monotonicity of $X_n$, this implies that $X_n \leq \delta$ for all $n \geq K$. By \eqref{eq:markov_recursion} for $n \geq K+1$
    \begin{align*}
         \P(X_{n} > \frac{y}{n}, A_K) &= \E[\mathbf{1}_{A_K} \P(X_{n} > \frac{y}{n} \mid \sigma(X_0, \ldots, X_{n-1}))]  \\
         &= \E[\mathbf{1}_{A_K} \mathbf{1}_{X_{n-1} >y/n}(1-\frac{y}{n}(1-X_{n-1})\phi'(X_{n-1}))]\\
         &\leq \big(1-\frac{y}{2n}(1-2\varepsilon) \big) \P(X_{n-1} > y/n, A_K) \\
         &\leq \big(1-\frac{y}{2n}(1-2\varepsilon)\big)^{n-K} \P(X_K > y/n, A_K)
    \end{align*}
    and similarly
    \begin{equation*}
        \P(X_{n} > \frac{y}{n}, A_K) \geq \big(1-\frac{y}{2n}(1+ 2\varepsilon)\big)^{n-K} \P(X_K > y/n, A_K).
    \end{equation*}
    Using the trivial bound
    \begin{equation*}
        \P(X_{n} > \frac{y}{n}, A_K) \leq \P(X_{n} > \frac{y}{n}) \leq \P(X_{n} > \frac{y}{n}, A_K) + \P(A_K^c)
    \end{equation*}
    together with the fact that $X_K > 0$ almost surely, %(so that $\P(X_K > y/n, A_K) \to \P(A_K)$)
    gives for any fixed $K$
    \begin{equation*}
        e^{-\frac{y}{2}(1+2\varepsilon)} \P(A_K) \leq \liminf_{n\to \infty}\P(X_{n} > \frac{y}{n}) \leq \limsup_{n\to \infty} \P(X_{n} > \frac{y}{n})\leq  e^{-\frac{y}{2}(1-2\varepsilon)} \P(A_K) + \P(A_K^c).
    \end{equation*}
    As $X_n \to 0$ a.s., letting $K$ go to infinity we obtain $\P(A_K) \to 1$. Since $\varepsilon$ was arbitrary, it follows that
    \begin{equation*}
        \limn \P(nX_{n} > y) = e^{-\frac{y}{2}}. \qedhere
    \end{equation*}
\end{proof}

Deducing the asymptotics of $\Psi(P_k)$ now becomes an exercise in analysis. We shall make use of the following.
\begin{lemma}[Stolz--Ces\`aro~{\cite[Section~12.6]{Wil91}}] \label{L:SC-asymp}
    If $(a_n)_{n \geq 1}, (b_n)_{n\geq 0}$ are sequences in $\R$ with $b_n\uparrow \infty$ and $a_n \to a_{\infty} \in \R$, then
    \begin{equation*}
        \frac{1}{b_n} \sum_{k=1}^n (b_k - b_{k-1}) a_k \nconv a_\infty.
    \end{equation*}
\end{lemma}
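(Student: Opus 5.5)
The statement is the classical Stolz--Cesàro lemma, so the plan is a direct $\varepsilon$-splitting argument, not an appeal to earlier results. Write $d_k := b_k - b_{k-1}$. The hypothesis $b_n \uparrow \infty$ gives $d_k \geq 0$ for all $k$, and the sum telescopes: $\sum_{k=1}^n d_k = b_n - b_0$. Hence
\begin{equation*}
\frac{1}{b_n}\sum_{k=1}^n d_k a_k - a_\infty = \frac{1}{b_n}\sum_{k=1}^n d_k (a_k - a_\infty) - \frac{b_0}{b_n}a_\infty .
\end{equation*}
The last term tends to $0$ because $b_n \to \infty$. So it suffices to show that the weighted average of $a_k - a_\infty$ tends to $0$.

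Fix $\varepsilon > 0$ and choose $N$ such that $|a_k - a_\infty| \leq \varepsilon$ for all $k > N$. Split the sum at $N$.

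\textbf{Head.} The terms $k \leq N$ contribute at most $b_n^{-1}\sum_{k=1}^N d_k |a_k - a_\infty|$. This is a fixed finite quantity divided by $b_n$, so it vanishes as $n \to \infty$.

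\textbf{Tail.} Since $d_k \geq 0$, the terms $k > N$ contribute at most
\begin{equation*}
\varepsilon\, b_n^{-1}\sum_{k=N+1}^n d_k = \varepsilon\,(b_n - b_N)/b_n \leq \varepsilon
\end{equation*}
for $n$ large, once $b_n > 0$ and $b_n \geq b_N$.

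Taking $\limsup_{n \to \infty}$ therefore gives a bound of $\varepsilon$. Since $\varepsilon$ was arbitrary, the limit is $a_\infty$.

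The argument is routine. The only points needing care are where monotonicity is used: it makes the weights $d_k$ nonnegative, which is what lets the tail estimate go through, and it guarantees $b_n > 0$ eventually, so that dividing by $b_n$ makes sense. No earlier result of the paper is needed.
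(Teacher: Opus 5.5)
Your proof is correct, and it is essentially the standard argument behind the citation: the paper gives no proof of its own and refers to Williams, Section~12.6. His proof of Ces\`aro's lemma uses the same split at a fixed index $N$, with the head killed by $b_n \to \infty$ and the tail controlled by $\varepsilon$ using $b_k - b_{k-1} \geq 0$; your extra term $-b_0 a_\infty/b_n$ just accounts for allowing $b_0 \neq 0$.

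One small slip: $(b_n - b_N)/b_n \leq 1$ requires $b_N \geq 0$, not $b_n \geq b_N$. Either choose $N$ large enough that $b_N \geq 0$, or note that this ratio tends to $1$, which still gives $\limsup \leq \varepsilon$.
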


First, observe that $F_{P_n}$ is non-increasing for every $n\geq 0$. Indeed, $F_L$ and $ (1-z)\phi(z)$ are non-increasing, so that by \eqref{eq:path_recursion} and induction each $F_{P_n}$ is non-increasing. Hence, the density
\begin{equation*}
    \frac{1}{n}F_{P_n}\left(\frac{x}{n}\right)\mathbf{1}_{\{ 0 \leq x\leq n\}}
\end{equation*}
of $nX_n$ is non-increasing. With monotonicity and Lemma~\ref{L:markov_conv} one may obtain that
\begin{equation*}
     \frac{1}{n}F_{P_n}\left(\frac{x}{n}\right)\mathbf{1}_{\{0 \leq x\leq n\}} \to \frac{1}{2} e^{-\frac{x}{2}} \qquad \forall x >0.
\end{equation*}
Scheff\'e's lemma (see e.g.\ \cite[Section~5.1]{Wil91}) then yields
\begin{equation*}
    \varepsilon_n:=
    \int_0^\infty
    \left|
        \frac1nF_{P_n}\left(\frac{x}{n}\right)\mathbf{1}_{\{x\leq n\}}
        -\frac12e^{-x/2}
    \right|dx
    \longrightarrow0.
\end{equation*}
Moreover, since $0\leq(1-t)\phi'(t)\leq1$, the recursion in \eqref{eq:path_recursion} gives
\begin{equation*}
    F_{P_n}(0)\leq F_{P_{n-1}}(0)+1.
\end{equation*}
Together with the monotonicity, this implies
\begin{equation*}
    \sup_{n\geq1}\sup_{x\geq0}
    \frac1nF_{P_n}\left(\frac{x}{n}\right)\mathbf{1}_{\{0\leq x\leq n\}}
    <\infty.
\end{equation*}

We claim that there exists a constant $C > 0$ such that, for $m,\ell \geq 1$,
\begin{equation} \label{eq:path_overlap}
    \left|\int_0^1 F_{P_m}(z)F_{P_\ell}(z)\,dz - \frac{m\ell}{2(m + \ell)}\right| \leq C\bigl(\ell\varepsilon_m + m\varepsilon_\ell\bigr).
\end{equation}
Indeed, changing variables $z = x/(m + \ell)$, using $|ab - cd| \leq |a - c||b| + |c||b - d|$, and applying the uniform bound on the densities gives
\begin{align*}
    &\left|\int_0^1 F_{P_m}(z)F_{P_\ell}(z)\,dz - \frac{m\ell}{2(m + \ell)}\right|\\
    &\quad = \frac{m\ell}{m + \ell}\Bigg|\int_0^\infty \frac{\mathbf{1}_{\{x \leq m + \ell\}}}{m}F_{P_m}\left(\frac{x}{m + \ell}\right) \frac{\mathbf{1}_{\{x \leq m + \ell\}}}{\ell}F_{P_\ell}\left(\frac{x}{m + \ell}\right)\,dx\\
    &\qquad\qquad - \int_0^\infty \frac{e^{-mx/(2(m + \ell))}}{2}\frac{e^{-\ell x/(2(m + \ell))}}{2}\,dx\Bigg|\\
    &\leq C\frac{m\ell}{m + \ell}\int_0^\infty\left|\frac{\mathbf{1}_{\{x \leq m + \ell\}}}{m}F_{P_m}\left(\frac{x}{m + \ell}\right) - \frac{e^{-mx/(2(m + \ell))}}{2}\right|\,dx\\
    &\quad + C\frac{m\ell}{m + \ell}\int_0^\infty\left|\frac{\mathbf{1}_{\{x \leq m + \ell\}}}{\ell}F_{P_\ell}\left(\frac{x}{m + \ell}\right) - \frac{e^{-\ell x/(2(m + \ell))}}{2}\right|\,dx,
\end{align*}
for some universal constant $C  > 0$. Performing another change of variables $u = mx/(m+\ell)$ and $v = \ell x/(m+\ell)$ in the respective integrals, one obtains the upper bound
\begin{equation*}
    C \frac{m\ell}{m+\ell}\Big( \frac{m+\ell}{m} \varepsilon_m +  \frac{m+\ell}{\ell} \varepsilon_\ell \Big) = C (\ell \varepsilon_m + m \varepsilon_\ell).
\end{equation*}
Therefore, bounding the terms in \eqref{eq:path_formula} corresponding to $i \in \{2,k\}$ by constants gives with \eqref{eq:path_overlap},
\begin{align*}
    \Psi(P_k) &= 2 + \sum_{i=3}^{k - 1} \frac{(i - 2)(k - i)}{2(k - 2)} + O\Big(1 + \sum_{i=3}^{k - 1} \big((k - i)\varepsilon_{i - 2} + (i - 2)\varepsilon_{k - i} \big) \Big) \\
    &= \frac{k^2}{12} + O\Big(k + k\sum_{i=1}^{k - 3}\varepsilon_i\Big) = \frac{k^2}{12} + o(k^2),
\end{align*}
where the last equality follows from $\sum_{i=1}^k \varepsilon_i = o(k)$ by Lemma~\ref{L:SC-asymp}. This proves \eqref{eq:path_asymp} in Proposition~\ref{P:star_path}.

%%%%%%%%%%%%%%%%%%%%%%%%%

\section{Proof of Theorem~\ref{T:ball_size}} \label{S:concentration}

We first deduce \eqref{eq:ball_size} from $\Psi(P_k) = (1 + o(1))k^2/12$. For fixed $r \in \N$, \eqref{eq:ball_as_prob} and Theorem~\ref{T:forest_formula} give
\begin{equation*}
    \limn \E_n\bigl[|B_{M_n}(1,r)|\bigr] = \sum_{k=1}^r \Psi(P_k).
\end{equation*}
Lemma~\ref{L:SC-asymp} with $b_k = k^3$ and $a_k = \Psi(P_k)/(k^3 - (k-1)^3)$ gives
\begin{equation*}
\lim_{r \to \infty}\frac{\sum_{k=1}^r \Psi(P_k)}{r^3} = \lim_{r \to \infty}\frac{\Psi(P_r)}{r^3 - (r - 1)^3} = \frac{1}{36}.
\end{equation*}

\smallskip 

For $q \geq 2$ and $r \in \N$, let $\mathcal T_{q,r}$ be the set of isomorphism classes of finite trees equipped with $q$ distinct distinguished vertices labeled by $[q]$, with all remaining vertices unlabeled, such that
\begin{equation*}
    T = \bigcup_{i=2}^q P_T(1,i), \qquad d_T(1,i) \leq r, \quad 2 \leq i \leq q.
\end{equation*}
Isomorphisms are required to preserve the distinguished labels. Thus every vertex of $T$ lies on a path from the root $1$ to one of the distinguished vertices. Whenever such a tree appears in an inclusion probability, we choose an arbitrary labeling of its remaining vertices; by exchangeability, the probability is independent of this choice. Moreover,
\begin{equation*}
    |E(T)| \leq \sum_{i=2}^q |E(P_T(1,i))| \leq (q - 1)r,
\end{equation*}
so $\mathcal{T}_{q,r}$ is finite. We give an explicit bound on its size in \eqref{eq:T_rq_bound} below.

Every ordered $q$-tuple of distinct vertices in $B_{M_n}(1,r)$ determines a unique tree in $\mathcal{T}_{q + 1,r}$ by taking the union of its paths to $1$, and every embedding of such a tree determines one ordered $q$-tuple. Therefore, for all sufficiently large $n$ and fixed $q$,
\begin{equation}
    \E_n\big[(|B_{M_n}(1,r)|)_q\big] = \sum_{T \in \mathcal{T}_{q + 1,r}}(n - 1)_{|E(T)|}\P_n(T \subseteq M_n). \label{eq:ball_factorial}
\end{equation}
Consequently,
\begin{equation*}
    \limn \E_n\bigl[(|B_{M_n}(1,r)|)_q\bigr] = \sum_{T \in \mathcal{T}_{q + 1,r}}\Psi(T).
\end{equation*}
Thus, all factorial moments, and hence all moments, are uniformly bounded in $n$. By \cite{NT24} (see \eqref{eq:local_conv}), the sizes $|B_{M_n}(1,r)|$ converge in distribution to $|B_\mathcal{M}(\emptyset, r)|$, so that Corollary~\ref{C:converge_E} follows. In fact, all fixed moments of $|B_{M_n}(1,r)|$ converge to those of $|B_\mathcal{M}(\emptyset, r)|$. The following is the main estimate required for the tail bounds in Theorem~\ref{T:ball_size}.
\begin{lemma} \label{L:bound_sum_of_trees}
    For all $r,q \in \N$,
    \begin{equation*}
        \sum_{T \in \mathcal{T}_{q + 1,r}} \Psi(T) \leq (q + 1)!\,8^q r^{3q}. %q!(Kr^3)^q
    \end{equation*}
\end{lemma}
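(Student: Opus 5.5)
We need to bound $\sum_{T\in\mathcal T_{q+1,r}}\Psi(T)$. The plan is to bound $\Psi(T)$ by a product over a decomposition of $T$ into paths hanging off the root, each contributing $O(\ell^2)$, and then to count the trees combinatorially.

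\textbf{Step 1: a pointwise bound on $\tau_e$.} Fix $T\in\mathcal T_{q+1,r}$ rooted at the distinguished vertex $1$. For an edge $e$, let $x$ be its endpoint farther from $1$ and $T_1(x)$ the descendant subtree. Since $\min$ over the component containing $1$ is at most $A_1$, and $\tau_e$ is a max of two minima,
\begin{equation*}
\tau_e(T)\le \max\bigl\{A_1,\ \min_{u\in V(T_1(x))}A_u\bigr\}\le A_1+\min_{u\in V(T_1(x))}A_u .
\end{equation*}
A cruder but product-friendly route is to choose, for each edge $e$, a single descendant leaf $\lambda(e)$ of $x$. Every descendant subtree contains a distinguished vertex among $2,\dots,q+1$, since leaves are distinguished. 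Then $\min_{u\in T_1(x)}A_u\le \min_{u\in P_T(x,\lambda(e))}A_u$. I would choose $\lambda$ via a fixed ordering of the distinguished vertices: process $i=2,\dots,q+1$ and attach the new segment of $P_T(1,i)$ not already covered. This decomposes $E(T)$ into at most $q$ edge-disjoint paths $Q_2,\dots,Q_{q+1}$, each of length at most $r$, with $Q_i$ ending at vertex $i$. For $e\in Q_i$ take $\lambda(e)=i$, so that $\tau_e\le \max\{A_1,\min_{u\in P_T(x,i)}A_u\}$.

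\textbf{Step 2: factorise the expectation.} Bound $\max\{a,b\}\le \max\{a,\text{stuff}\}$ and use $\E[\prod]$ via Hölder with $q$ factors, or better, condition on $A_1$. The paths $Q_i$ are vertex-disjoint apart from their attachment points. Dropping attachment-point variables only enlarges minima over fewer variables, so the bound still holds with the $Q_i$ treated on disjoint vertex sets. Conditionally on $A_1=a$, the factors become independent across $i$. The contribution of one path of length $\ell\le r$ is $\E\bigl[\prod_{e\in Q}\max\{a,\min_{\text{tail of }Q \text{ beyond } e}A\}\bigr]$. This is exactly the type of quantity controlled by the path analysis: it behaves like $\Psi(P_\ell)$-type quantities, namely $O(\ell^2)$ uniformly, polynomially in $a$. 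Concretely I would aim for the bound $\le 8r^{3}$ per path in a form that also absorbs the later combinatorial factor. Note that $\prod_{e\in Q}\min_{\text{tail}}A$ is increasing along the path, so it equals $\E[\prod_{j=1}^{\ell}\phi(\min(U_j,\dots,U_\ell))]=\ell!^{-1}$-type expressions, which can be handled by ordering the $U$'s. The expectation in $A_1$ afterwards is finite since $A_1$ has exponential tails ($1-\theta(t)\approx e^{-t}$). This gives a factor of the form $(q+1)!$ from $\E[A_1^{q}]$-type moments. I expect this step to be the main obstacle: getting a clean constant like $8r^2$ per path, times $r$ from the counting, without losing factorials in $\ell$.

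\textbf{Step 3: count the trees.} Each $T\in\mathcal T_{q+1,r}$ is determined by building it in the order $i=2,\dots,q+1$. For the $i$-th step one chooses the attachment vertex on the already-built tree, which lies at depth $\le r$ along one of at most $i-1$ existing root paths, giving at most $(i-1)(r+1)$ choices, and a new segment length $\le r$. So $|\mathcal T_{q+1,r}|\le q!\,(2r)^{2q}$-ish, and the tree count times the per-path bound $O(r^{2})^{q}$ would overshoot $r^{3q}$. Hence I would instead sum more carefully: the number of choices for the attachment point at depth $d$ together with segment length $\ell$, weighted by the per-path bound $C\ell^2$ (actually with the $\max\{A_1,\cdot\}$ the segment sees depth as well), sums over $d,\ell\le r$ to $O(r\cdot r^2)$ per new distinguished vertex, times the $\le q$ choice of which earlier path. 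Multiplying over $i$ yields $q!\,(Cr^3)^q$, and the $A_1$ moment contributes the extra factor $q+1$. Calibrating $C=8$ is routine once Step 2 is fixed.
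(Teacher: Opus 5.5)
There is a genuine gap in Steps 1--2. The bound $\tau_e(T)\le\max\{A_1,\min_{u\in V(T_1(x))}A_u\}$ is true, but it loses too much. Every factor is at least $A_1$, so any bound you derive from it is at least $\E[A_1^{|E(T)|}]$. Since $\P(A_1>t)=1-\theta(t)\sim e^{-t}$ (equivalently $A_1=\phi(U_1)$ with $\phi(u)\approx-\log(1-u)$ near $u=1$), $\E[A_1^m]$ grows like $m!$.

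Already for $q=1$ this is fatal. There $\mathcal T_{2,r}$ consists of the paths $P_k$ with $k\le r$, and your method cannot give anything better than $\sum_{k\le r}\E[A_1^k]$. That is super-polynomial in $r$, whereas the lemma claims $16r^3$ (and in fact $\Psi(P_k)\sim k^2/12$).

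In particular, the per-path quantity $\E\bigl[\prod_{j}\max\{a,m_j\}\bigr]\ge a^{\ell}$ is not $O(\ell^2)$ uniformly in $a$. The $A_1$-integration therefore produces a moment of order $|E(T)|$ (up to $qr$), not ``$\E[A_1^q]$-type'' moments giving $(q+1)!$. Also, $\E\bigl[\prod_{j=1}^{\ell}\phi(\min(U_j,\dots,U_\ell))\bigr]=F_{P_{\ell-1}}(0)$ grows linearly in $\ell$; it is not of size $1/\ell!$.

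The missing idea is to control the root side by the \emph{global} minimum rather than by a fixed vertex. Condition on the vertex $O$ at which $\min_v U_v$ is attained and root $T$ there. Then the component of $T\setminus\{e\}$ containing $O$ always carries the smaller minimum, so $\tau_e(T)=\phi\bigl(\min_{u\in V(T_O(x))}U_u\bigr)$ exactly; this is Lemma~\ref{L:Psi_recursion}.

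The paper then argues by induction over rooted trees, using Lemma~\ref{L:F_recursion}, \eqref{eq:integral_recursion}, the monotonicity of $F_{T_o}$ and $(1-t)\phi'(t)\le 1$. This gives $\int_0^1F_{T_o}\le\zeta(2)^{\ell}(h+1)^{\ell-1}$ and $F_{T_o}(0)\le\zeta(2)^{\ell}(h+1)^{\ell}$, whence $\Psi(T)\le N\zeta(2)^KD^{K-1}$ (Lemma~\ref{L:Psi_bound_L_D}).

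For $T\in\mathcal T_{q+1,r}$ this yields $\Psi(T)\le 2(q+1)4^qr^{q+1}$. That is one factor $D\le 2r$ for each leaf beyond the first, plus a factor $N\le qr+1$ from the sum over roots — not $r^2$ per path. Multiplying by the crude count $|\mathcal T_{q+1,r}|\le 2^{q-1}q!\,r^{2q-1}$ gives exactly $(q+1)!\,8^qr^{3q}$. So the depth-weighted summation you anticipate in Step 3 is unnecessary once the per-tree bound is correct. Your worry about overshooting comes from the miscalibrated $O(r^2)$-per-path heuristic.
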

Before proving the lemma, we show how it implies \eqref{eq:ball_exp_bound} and \eqref{eq:ball_concentration}. By increasing $C$, it is enough to consider $\lambda \geq 128$. Set $q = \lfloor \lambda/64 \rfloor$ with
\begin{equation*}
    (\lambda r^3)_q = \prod_{j=0}^{q-1} (\lambda r^3 - j) \geq \Big( \frac{\lambda r^3}{2} \Big)^q.
\end{equation*}
By Lemma~\ref{L:bound_sum_of_trees} and Markov's inequality
\begin{align*}
    \limsup_{n \to \infty} \P_n(|B_{M_n}(1,r)| \geq \lambda r^3) &= \limsup_{n \to \infty} \P_n((|B_{M_n}(1,r)|)_q \geq (\lambda r^3)_q) \\
    &\leq \frac{(q+1)! 16^q r^{3q}}{\lambda^q r^{3q}} \leq (q+1)\Big(\frac{16q}{\lambda} \Big)^q \\
    &\leq (q+1)4^{-q} \leq 2^{-q},
\end{align*}
from which \eqref{eq:ball_exp_bound} follows. Taking $q = 1$ shows that $\E_n[|B_{M_n}(1,r)|] \leq 17r^3$ for all sufficiently large $n$. Hence, for $\lambda > 17$,
\begin{equation*}
    \left\{\left||B_{M_n}(1,r)| - \E_n|B_{M_n}(1,r)|\right| \geq \lambda r^3\right\} \subseteq \left\{|B_{M_n}(1,r)| \geq \lambda r^3\right\},
\end{equation*}
so that \eqref{eq:ball_concentration} follows from \eqref{eq:ball_exp_bound}.

\subsection{Proof of Lemma~\ref{L:bound_sum_of_trees}}

For a rooted tree $T_o$, let
\begin{equation*}
    h(T_o) := \max_{v \in V(T_o)} d_{T_o}(o,v)
\end{equation*}
be its height, and let $\ell(T_o)$ be the number of vertices with no children when all edges are oriented away from $o$. If $T_o$ is not the singleton and $o$ is not a leaf, then $\ell(T_o)$ counts the number of leaves of $T_o$.

\begin{lemma} \label{L:Psi_bound_L_D}
    Let $T$ be a non-trivial tree with $N$ vertices, $K$ leaves, and diameter $D$. Then
    \begin{equation*}
        \Psi(T) \leq N\zeta(2)^K D^{K - 1}.
    \end{equation*}
\end{lemma}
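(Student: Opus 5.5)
Starting from Lemma~\ref{L:Psi_recursion}, we have $\Psi(T) = \sum_{o} \int_0^1 \prod_{y\sim o} F_{T_o(y)}(z)\,dz$. The factor $N$ will come from the sum over the $N$ possible roots. It therefore suffices to show that for every root $o$,
\begin{equation*}
    \int_0^1 \prod_{y \sim o} F_{T_o(y)}(z)\,dz \leq \zeta(2)^K D^{K-1}.
\end{equation*}
The strategy is a pointwise bound on each rooted function $F_{T_o}$. We treat them as being bounded in terms of two quantities: the number of ``leaf-ends'' $\ell(T_o)$ and the height $h(T_o)$. The base case is $F_L(z) \leq F_L(0) = \zeta(2)$. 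Along a path, the recursion \eqref{eq:path_recursion} gives $F_{P_n}(0) \leq F_{P_{n-1}}(0) + 1$, since $(1-t)\phi(t)\le 1$ and $0\le (1-t)\phi'(t) \le 1$. Hence $F_{P_n}(0) \leq \zeta(2) + n$, which grows only linearly in the length.

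Concretely, I would prove by induction over Lemma~\ref{L:F_recursion} the claim
\begin{equation*}
    \sup_z F_{T_o}(z) = F_{T_o}(0) \leq \zeta(2)^{\ell(T_o)} (h(T_o)+1)^{\ell(T_o)-1}\cdot c,
\end{equation*}
possibly with an adjusted form such as $\prod (\zeta(2)+h)$-type bounds, together with the integral normalization from \eqref{eq:integral_recursion}:
\begin{equation*}
    \int_0^1 F_{T_o}(z)\,dz = \int_0^1 \prod_i F_{T_o(y_i)}(z)\,dz.
\end{equation*}
Monotonicity (each $F$ is non-increasing, so its sup sits at $0$) lets us write the recursion at $z=0$ as
\begin{equation*}
    F_{T_o}(0) \le \prod_i F_{T_o(y_i)}(0) + \int_0^1 (1-t)\phi'(t)\prod_i F_{T_o(y_i)}(t)\,dt.
\end{equation*}
For the integral term, the plan is to bound all but one factor by its sup and integrate the remaining one using the normalization. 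Iterating this, integrals reduce to $\int F_L = 1$, while sups cost a factor $\zeta(2)$ for each leaf plus a linear height term. This handles the integrand in Lemma~\ref{L:Psi_recursion}: there we bound all branch factors except one by their sup and integrate the last one, using $\int_0^1 F \le$ (sup bound with one fewer $D$-factor). The total number of leaf-ends across the branches at $o$ is at most $K$ (if $o$ is a leaf it contributes one extra end, but then there is a single branch). Every height is at most $D$. Together these give $\zeta(2)^K D^{K-1}$.

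The main obstacle is making the exponents of $D$ exact, namely $K-1$ rather than $K$. Naively, each leaf-to-branch-point segment contributes a factor $(\zeta(2)+h)$ to the sup. The key is that integrating one factor removes exactly one length factor, since a path integrates to $1$ by \eqref{eq:integral_recursion} rather than to $h$. I would therefore carry the induction with two statements simultaneously. The first is the sup bound $F_{T_o}(0) \le \zeta(2)^{\ell} h^{\ell}$-type, with care taken for $h=0$ (use $\zeta(2)\cdot\max(h,1)$ and $\zeta(2)+h\le \zeta(2)\max(h,1)\cdot$const, absorbing constants using $\zeta(2)>1.6$). The second is the integral bound $\int_0^1 F_{T_o} \le \zeta(2)^{\ell} h^{\ell-1}$. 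If the constants do not close cleanly, I would fall back on bounding $(1-t)\phi'(t)\le 1$ and $(1-t)\phi(t)\le 1$ more carefully, and on tracking heights via $h(T_o(y_i))\le h(T_o)-1$ so that the additive $+1$ from each edge telescopes into the height.
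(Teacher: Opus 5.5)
Your architecture matches the paper's. You get the factor $N$ from summing over roots in Lemma~\ref{L:Psi_recursion}. You run a simultaneous induction on $F_{T_o}(0)$ and $\int_0^1 F_{T_o}$ via Lemma~\ref{L:F_recursion} and \eqref{eq:integral_recursion}, using monotonicity. You bound all but one branch factor by its value at $0$. The gap is the one you flag yourself: you never commit to an inductive hypothesis that closes with the sharp constant, and the ones you write down fail.

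The first candidate, $F_{T_o}(0)\le c\,\zeta(2)^{\ell}(h+1)^{\ell-1}$, is false for paths rooted at an endpoint ($\ell=1$). There $F_{P_n}$ is a non-increasing density of $X_n$, and $X_n\to0$ almost surely by Lemma~\ref{L:markov_conv}. Hence $F_{P_n}(0)\ge \P(X_n\le\varepsilon)/\varepsilon\to 1/\varepsilon$ for every $\varepsilon>0$, so $F_{P_n}(0)\to\infty$.

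The refined candidate, $F_{T_o}(0)\le\zeta(2)^\ell\max(h,1)^\ell$ together with $\int_0^1F_{T_o}\le\zeta(2)^\ell h^{\ell-1}$, is false at $h=1$. If all children of $o$ are leaves, the first term of Lemma~\ref{L:F_recursion} at $z=0$ already equals $\zeta(2)^\ell$, and the integral term is strictly positive. Concretely, $F_{S_1}(0)=\tfrac12\zeta(2)^2+2(\zeta(2)-\zeta(3))\approx 2.24>\zeta(2)$. Absorbing a constant $c>1$ per leaf rescues the induction, but it only gives $N(c\,\zeta(2))^K D^{K-1}$, which is not the stated inequality.

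The missing ingredient is to shift the height by one in both statements, as the paper does:
\[
\int_0^1F_{T_o}\le\zeta(2)^\ell(h+1)^{\ell-1}, \qquad F_{T_o}(0)\le\zeta(2)^\ell(h+1)^\ell .
\]
Every branch satisfies $h_i+1\le h$, and $\sum_i \ell_i=\ell$. So the step gives $\int_0^1F_{T_o}\le\zeta(2)^\ell h^{\ell-1}$. It also gives $F_{T_o}(0)\le\zeta(2)^\ell h^\ell+\zeta(2)^\ell h^{\ell-1}\le\zeta(2)^\ell(h+1)^\ell$ by the binomial theorem. In the final step you need the branches at $o$ to have height at most $D-1$, not merely $D$. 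Then $h+1\le D$ yields $\zeta(2)^K D^{K-1}$ per root.

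Your hinted $(\zeta(2)+h)$-form also works if stated precisely: take $F_{T_o}(0)\le(\zeta(2)+h)^\ell$ and $\int_0^1F_{T_o}\le(\zeta(2)+h)^{\ell-1}$. These close by the same binomial argument, and $\zeta(2)+D-1\le\zeta(2)D$ finishes.
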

\begin{proof}
    We first claim that every rooted tree $T_o$, with $h := h(T_o)$ and $\ell := \ell(T_o)$, satisfies
    \begin{equation} \label{eq:F_induction_bound}
        \int_0^1 F_{T_o}(z)\,dz \leq \zeta(2)^\ell (h+1)^{\ell - 1}, \qquad F_{T_o}(0) \leq \zeta(2)^\ell(h + 1)^\ell.
    \end{equation}
    We argue by induction on $|V(T_o)|$. If $|V(T_o)| = 1$, then $h = 0$, $\ell = 1$, and \eqref{eq:F_leaf} and \eqref{eq:leaf_integral} give
    \begin{equation*}
        \int_0^1 F_{T_o}(z)\,dz = 1, \qquad F_{T_o}(0) = \zeta(2),
    \end{equation*}
    so the claim holds.

    Suppose that $|V(T_o)| \geq 2$ and $o$ has neighbors $y_1, \ldots, y_d$ with corresponding branches $T_o(y_1), \ldots, T_o(y_d)$. Write $h_i := h(T_o(y_i))$ and $\ell_i := \ell(T_o(y_i))$. Then $h_i + 1 \leq h$ and $\sum_{i=1}^d \ell_i = \ell$. Since each $F_{T_o(y_i)}$ is non-increasing, \eqref{eq:integral_recursion} and the induction hypothesis give
    \begin{align*}
        \int_0^1 F_{T_o}(z)\,dz &\leq \left(\int_0^1 F_{T_o(y_1)}(z)\,dz\right)\prod_{i=2}^d F_{T_o(y_i)}(0) \leq \zeta(2)^\ell h^{\ell - 1}.
    \end{align*}
    Moreover, Lemma~\ref{L:F_recursion}, \eqref{eq:integral_recursion}, and $(1 - t)\phi'(t) \leq 1$ yield
    \begin{align*}
        F_{T_o}(0) &\leq \prod_{i=1}^d F_{T_o(y_i)}(0) + \int_0^1 \prod_{i=1}^d F_{T_o(y_i)}(t)\,dt = \prod_{i=1}^d F_{T_o(y_i)}(0) + \int_0^1 F_{T_o}(t)\,dt\\
        &\leq \zeta(2)^\ell h^\ell + \zeta(2)^\ell h^{\ell - 1} \leq \zeta(2)^\ell(h + 1)^\ell.
    \end{align*}

    Now fix $o \in V(T)$. Each branch $T_o(y)$ has height at most $D - 1$, and every vertex with no children in such a branch is a leaf of $T$. Hence
    \begin{equation*}
        \sum_{y \sim o} \ell(T_o(y)) \leq K.
    \end{equation*}
    Choose one neighbor $y_o$ of $o$. Using the monotonicity of the functions $F_{T_o(y)}$ and \eqref{eq:F_induction_bound}, we obtain
    \begin{equation*}
        \int_0^1 \prod_{y \sim o} F_{T_o(y)}(z)\,dz \leq \left(\int_0^1 F_{T_o(y_o)}(z)\,dz\right)\prod_{\substack{y \sim o\\y \neq y_o}} F_{T_o(y)}(0) \leq \zeta(2)^K D^{K - 1}.
    \end{equation*}
    Summing this bound over $o \in V(T)$ in Lemma~\ref{L:Psi_recursion} completes the proof.
\end{proof}

We need one final bound on the number of trees in $\mathcal{T}_{q,r}$.
\begin{lemma}
    For every $r \geq 1$ and $q \geq 2$, we have
    \begin{equation}
        |\mathcal{T}_{q,r}| \leq 2^{q - 2}(q - 1)!\,r^{2q - 3} \label{eq:T_rq_bound}
    \end{equation}
\end{lemma}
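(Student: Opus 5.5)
We will prove the bound by induction on $q$, building a tree in $\mathcal{T}_{q,r}$ by attaching the paths to the distinguished vertices one at a time. For $q = 2$, a tree in $\mathcal{T}_{2,r}$ is a path from $1$ to $2$ of length between $1$ and $r$. Isomorphism classes are determined by this length alone, so $|\mathcal{T}_{2,r}| = r$, which matches $2^{0}\cdot 1!\cdot r^{1}$.

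For the inductive step, take $T \in \mathcal{T}_{q+1,r}$ and let $T' := \bigcup_{i=2}^{q} P_T(1,i)$. Then $T' \in \mathcal{T}_{q,r}$ with the labels of $1,\dots,q$ inherited, and $T$ is obtained from $T'$ by adding the path $P_T(1,q+1)$. This path first follows $T'$ from $1$ until it reaches a last vertex $b \in V(T')$. It then continues through a possibly empty path of new vertices, of length $s \geq 0$, ending at $q+1$. When $s = 0$ we have $b = q+1$, so $q+1$ is an existing unlabeled vertex of $T'$. We count the data $(T', b, s)$; they determine $T$ up to isomorphism, so this gives an upper bound on $|\mathcal{T}_{q+1,r}|$.

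The choice of $b$ is the main point. Since $T'$ is a union of $q-1$ root-to-label paths, every vertex of $T'$ lies on one of the paths $P_{T'}(1,i)$ for $2 \leq i \leq q$. We specify $b$ by choosing the index $i$ ($q-1$ choices) and the distance $d_{T'}(1,b) \in \{0,\dots,r\}$. This gives at most $(q-1)(r+1) \leq 2(q-1)r$ choices of $b$, using $r \geq 1$. Since $d_T(1,q+1) = d_{T'}(1,b) + s \leq r$, there are at most $r+1 \leq 2r$ choices of $s$ once $b$ is fixed.

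This naive count gives $|\mathcal{T}_{q+1,r}| \leq 4(q-1)r^2\,|\mathcal{T}_{q,r}|$. The target is $2^{q-1} q!\, r^{2q-1} = 2q r^2 \cdot 2^{q-2}(q-1)!\, r^{2q-3}$, so we need a factor $2q r^2$ per step, and we are a factor of about $2$ too large. To recover it, we count the pairs $(b,s)$ jointly rather than separately. Once the index $i$ is fixed, the pair $(d_{T'}(1,b), s)$ ranges over nonnegative integers with $d_{T'}(1,b) + s \leq r$. There are $(r+1)(r+2)/2$ such pairs. Hence
\begin{equation*}
    |\mathcal{T}_{q+1,r}| \leq (q-1)\frac{(r+1)(r+2)}{2}\,|\mathcal{T}_{q,r}| .
\end{equation*}
For $r \geq 2$ we have $(r+1)(r+2)/2 \leq 3r^2/2 \cdot \ldots$ more precisely $(r+1)(r+2) \leq 4r^2$ whenever $r \geq 2$, so the factor is at most $2(q-1)r^2 \leq 2q r^2$. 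For $r = 1$ we have $\mathcal{T}_{q,1} \subseteq$ stars, so $|\mathcal{T}_{q,1}| = 1$ and the claimed bound holds trivially.

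This arithmetic is the main obstacle. If $(q-1)$ ever has to be replaced by $q$, for instance because one also wants to allow $b = 1$ as a separate case, then the inequality $q(r+1)(r+2)/2 \leq 2q r^2$ still holds for all $r \geq 2$. So the plan is to handle $r = 1$ directly and run the induction for $r \geq 2$. Over-counting is harmless here: the same $T$ may arise from several triples $(i, d_{T'}(1,b), s)$, which only weakens an upper bound we do not need to be sharp.
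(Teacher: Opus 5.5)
Your proof is correct and takes essentially the same route as the paper: build $T$ from $\bigcup_{i=2}^{j}P_T(1,i)$ by attaching the path to the next distinguished vertex, specified by a branch point and a tail length in $\{0,\dots,r\}$. The only difference is in the counting. The paper bounds the number of branch points by $|V(T_j)|\le 1+(j-1)r\le jr$ and the tail length by $r+1\le 2r$, which gives the factor $2jr^2$ for every $r\ge 1$ at once; your parametrisation of $b$ by a path index and a depth overcounts the branch points, so you recover the factor $2$ through the joint constraint $d+s\le r$, which forces the separate (and correctly handled) case $r=1$.
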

\begin{proof}
    Fix $T\in\mathcal{T}_{q,r}$ and define
    \begin{equation*}
        T_j:=\bigcup_{i=2}^j P_T(1,i),
        \qquad 2\leq j\leq q,
    \end{equation*}
    with $T_q=T$. The tree $T_2=P_T(1,2)$ is determined by its length, which belongs to $\{1,\ldots,r\}$. Hence there are at most $r$ possibilities for
    $T_2$.
    
    Suppose that $T_j$ has been constructed for some $2\leq j<q$. Since
    $P_T(1,j+1)$ and $T_j$ are connected subtrees of $T$ containing $1$, their intersection is a path starting at $1$. Let $v_{j+1} \in V(T_j)$ be its other endpoint. The length of the path $P_T(v_{j+1},j+1)$ belongs to $\{0,\ldots,r\}$, where length zero means that $j+1$ is an existing vertex of $T_j$. Moreover,
    \begin{equation*}
        |V(T_j)|
        \leq 1+\sum_{i=2}^j d_T(1,i)
        \leq 1+(j-1)r
        \leq jr.
    \end{equation*}
    Consequently, given $T_j$, there are at most
    \begin{equation*}
        |V(T_j)|(r+1)\leq jr\cdot 2r=2jr^2
    \end{equation*}
    possibilities for $T_{j+1}$. It follows that
    \begin{equation*}
        |\mathcal{T}_{q,r}| \leq r\prod_{j=2}^{q-1}2jr^2=2^{q-2}(q-1)!r^{2q-3}. \qedhere
    \end{equation*}
\end{proof}

\begin{proof}[Proof of Lemma~\ref{L:bound_sum_of_trees}]
    Any tree $T \in \mathcal{T}_{q + 1,r}$ has at most $qr + 1$ vertices, diameter at most $2r$, and at most $q + 1$ leaves. Lemma~\ref{L:Psi_bound_L_D} gives
    \begin{equation*}
        \Psi(T) \leq (qr + 1)\zeta(2)^{q + 1}(2r)^q \leq 2(q + 1)4^q r^{q + 1}.
    \end{equation*}
    Summing over $T \in \mathcal{T}_{q + 1,r}$ and using \eqref{eq:T_rq_bound},
    \begin{equation*}
        \sum_{T \in \mathcal{T}_{q + 1,r}} \Psi(T) \leq 2^{q - 1}q!\,r^{2q - 1}\,2(q + 1)4^q r^{q + 1} = (q + 1)!\,8^q r^{3q},
    \end{equation*}
    as required.
\end{proof}

\subsection*{Statement on the use of AI}

ChatGPT (GPT-5.6 Sol) was used to assist in generating ideas for some of the mathematical arguments and to improve the language and presentation of this manuscript. It was particularly helpful in developing the code referenced in Appendix~\ref{S:Appendix}. The author takes full responsibility for all mathematical statements and any errors.

\bibliographystyle{plain}
\bibliography{MST_F}

\appendix

\section{Integral calculations} \label{S:Appendix}

Define the Riemann-zeta function and the dilogarithm as
\begin{align*}
    \zeta(s) &:= \sum_{n=1}^\infty \frac{1}{n^s}, \quad s > 1,\\
    \Li(z) &:= \int_0^z \phi(t) dt = \sum_{n=1}^\infty \frac{z^n}{n^2} \quad \text{with }\Li'(z) = \phi(z).
\end{align*}
We note the relations
\begin{equation} \label{eq:zeta_relations}
    \zeta(2)^2 = \frac{5}{2}\zeta(4), \qquad \zeta(2)\zeta(4) =\frac{7}{4} \zeta(6), \qquad \zeta(2)^3= \frac{35}{8} \zeta(6).
\end{equation}
In this notation, the leaf function from \eqref{eq:F_leaf} satisfies
\begin{equation} \label{eq:leaf_as_li}
    F_L(z) = \int_z^1 \phi(t) dt = \zeta(2) - \Li(z).
\end{equation}

\subsection{Values of \texorpdfstring{$\Psi$}{Psi} for \texorpdfstring{$S_2$}{Star2} and \texorpdfstring{$S_3$}{Star}}
In view of \eqref{eq:star_formula} and \eqref{eq:leaf_as_li}, for the stars, it suffices to find the integral of powers of $\Li(z)$. For many of the calculations, we will use the integrals from \cite{Fre05}. In particular, in Table~3 of \cite{Fre05} one has
\begin{equation} \label{eq:Li_2}
    \int_0^1 \Li^2(z)\,dz = 6-2\zeta(2)-4\zeta(3)+\frac52\zeta(4).
\end{equation}
With $\zeta(2)^2 = 5\zeta(4)/2$, one obtains
\begin{equation*}
    \Psi(S_2) = \Psi(P_2) = \int_0^1 F_L(z)^2 dz + 2\int_0^1F_L(z) dz = 8 - 4 \zeta(3).
\end{equation*}

In the following, $\IP$ indicates an application of integration by parts. The label $u$ marks the term to be differentiated, while $v'$ marks the term to be integrated. We have
\begin{align*}
\int_0^1
    \underbrace{\Li^3(z)}_{u}
    \underbrace{1}_{v'}\,dz
&\stackrel{\IP}{=}
\zeta^3(2)
+3\int_0^1
    \underbrace{\Li^2(z)}_{u}
    \underbrace{\log(1-z)}_{v'}\,dz                                    \\
&\stackrel{\IP}{=}
\zeta^3(2)
+6\int_0^1\frac{1-z}{z}\Li(z)
 \bigl(\log(1-z)-\log^2(1-z)\bigr)\,dz                                 \\
&=
\zeta^3(2)
+6\int_0^1\frac{\Li(z)\log(1-z)}{z}\,dz -6\int_0^1\frac{\Li(z)\log^2(1-z)}{z}\,dz  \\
&\qquad +6\int_0^1\Li(z)
 \bigl(\log^2(1-z)-\log(1-z)\bigr)\,dz .
\end{align*}
First,
\begin{equation*}
\int_0^1\frac{\Li(z)\log(1-z)}{z}\,dz
=
-\int_0^1 \Li(z) \phi(z)\,dz = -\frac{1}{2} \int_0^1 (\Li(z)^2)' dz=
-\frac12\zeta^2(2).
\end{equation*}
For the third integral above, the formula
\begin{equation*}
\int_0^1\frac{1-z}{z}\log^k(1-z)\,dz
= (-1)^k \sum_{n=1}^\infty \int_0^1 t^n (-\log t)^k dt = 
(-1)^k k!\bigl(\zeta(k+1)-1\bigr)
\end{equation*}
gives
\begin{align*}
&\int_0^1
 \underbrace{\Li(z)}_{u}
 \underbrace{\bigl(\log^2(1-z)-\log(1-z)\bigr)}_{v'}\,dz  \stackrel{\IP}{=}
-15+3\zeta(2)+6\zeta(3)+6\zeta(4).
\end{align*}
Finally, using the transformation $z=1-x$ with
\begin{equation} \label{eq:Li_transform}
    \Li(1-x)=\zeta(2)-\Li(x)-\log x\log(1-x)
\end{equation}
shows
\begin{align*}
\int_0^1\frac{\Li(z)\log^2(1-z)}{z}\,dz
&=
\zeta(2)\int_0^1\frac{\log^2x}{1-x}\,dx
-\int_0^1\frac{\log^2x\,\Li(x)}{1-x}\,dx \\
&\qquad
-\int_0^1\frac{\log^3x\log(1-x)}{1-x}\,dx  . 
\end{align*}
The first integral is $2 \zeta(3)$, and the other two integrals are given in Table~6 of \cite{Fre05}: 
\begin{align}
\int_0^1 \frac{\log^2x\,\Li(x)}{1-x}\,dx &= 6\zeta(2)\zeta(3)-11\zeta(5),  \label{eq:refer_table1}\\
\int_0^1 \frac{\log^3x\log(1-x)}{1-x}\,dx &= -6\zeta(2)\zeta(3)+12\zeta(5) \label{eq:refer_table2}.
\end{align}
Simplifying, we obtain 
\begin{equation} \label{eq:Li_3}
\int_0^1\Li^3(z)\,dz = -90+18\zeta(2)+36\zeta(3)+\frac{57}{2}\zeta(4) -12\zeta(2)\zeta(3)+6\zeta(5)+\frac{35}{8}\zeta(6).
\end{equation}
With  \eqref{eq:star_formula}, \eqref{eq:zeta_relations}, \eqref{eq:leaf_as_li}, \eqref{eq:Li_2} and \eqref{eq:Li_3} one can obtain
\begin{align*}
    \Psi(S_3) &= \int_0^1 F_L(z)^3 dz + 3\int_0^1F_L(z)^2 dz = 108 -48 \zeta(3) - 36\zeta(4) - 6\zeta(5).
\end{align*}

\subsection{The path \texorpdfstring{$P_3$}{Path3}}
Formulas \eqref{eq:path_formula} and \eqref{eq:path_recursion} give
\begin{align*}
    \Psi(P_3) = 2 + 2\int_0^1 F_L(z) F_{P_1}(z) dz,
\end{align*}
where, by Lemma~\ref{L:F_recursion},
\begin{align*}
    F_{P_1}(z) = (1-z) \phi(z)F_L(z) + \int_z^1 \underbrace{(1-t) \phi'(t)}_{v'} \underbrace{F_L(t)}_u dt \stackrel{\IP}{=} \frac{1}{2}F_L(z)^2 + \int_{z}^1(1-t) \phi(t)^2 dt.
\end{align*}
Therefore,
\begin{align*}
\Psi(P_3)
&= 2 + \int_0^1 F_L(z)^3\,dz + 2\int_0^1 F_L(z)\int_z^1 (1 - t)\phi(t)^2\,dt\,dz,
\end{align*}
where we calculated the integral of $F_L(z)^3$ previously. By Fubini,
\begin{equation*}
\int_0^1F_L(z)\int_z^1(1-t)\phi^2(t)\,dt\,dz = \int_0^1(1-t)\phi^2(t)
\left(\int_0^tF_L(z)\,dz\right)dt,
\end{equation*}
with
\begin{align*}
\int_0^t
\underbrace{F_L(z)}_{u}
\underbrace{1}_{v'}\,dz
\stackrel{\IP}{=}
tF_L(t)-\int_0^t zF_L'(z)\,dz=
tF_L(t)+t+(1-t)\log(1-t).
\end{align*}
Since $\phi(t)=-\log(1-t)/t$, it follows that
\begin{align}
\int_0^1F_L(z)\int_z^1(1-t)\phi^2(t)\,dt\,dz
&=
\int_0^1\frac{1-t}{t}\log^2(1-t)(\zeta(2) - \Li(t))\,dt \nonumber \\
&\quad+
\int_0^1\frac{1-t}{t}\log^2(1-t)\,dt
+
\int_0^1\frac{(1-t)^2}{t^2}\log^3(1-t)\,dt. \label{eq:Appendix_int1}
\end{align}
Using the substitution $x = 1-t$ and \eqref{eq:Li_transform}, the first integral becomes
\begin{align}
\int_0^1 \frac{1-t}{t}\log^2(1-t)\bigl(\zeta(2)-\Li(t)\bigr)\,dt
&= \int_0^1 \frac{\log^2x\,\Li(x)}{1-x}\,dx-\int_0^1 \log^2x\,\Li(x)\,dx \label{eq:Appendix_int2}\\
&\quad+\int_0^1 \frac{\log^3x\log(1-x)}{1-x}\,dx-\int_0^1 \log^3x\log(1-x)\,dx.  \nonumber
\end{align}
The first and third integrals on the right-hand side of \eqref{eq:Appendix_int2} are given in \eqref{eq:refer_table1} and \eqref{eq:refer_table2}. Moreover, integration by parts and the power-series expansion of $\log(1-x)$ give
\begin{align*}
\int_0^1 \underbrace{\log^2x}_{v'}\,\underbrace{\Li(x)}_{u}\,dx &\stackrel{\IP}{=} -12+6\zeta(2)+2\zeta(3), \\
\int_0^1 \log^3x\log(1-x)\,dx &= 24-6\zeta(2)-6\zeta(3)-6\zeta(4).
\end{align*}
Consequently,
\begin{equation*}
\int_0^1 \frac{1-t}{t}\log^2(1-t)\bigl(\zeta(2)-\Li(t)\bigr)\,dt
= -12+4\zeta(3)+6\zeta(4)+\zeta(5).
\end{equation*}
Using standard power-series expansions, one obtains the remaining two integrals in \eqref{eq:Appendix_int1}:
\begin{align*}
\int_0^1 \frac{1-t}{t}\log^2(1-t)\,dt &= 2\zeta(3)-2, \\
\int_0^1 \frac{(1-t)^2}{t^2}\log^3(1-t)\,dt
&= -6\sum_{k=3}^\infty \frac{k-2}{k^4}
= -6-6\zeta(3)+12\zeta(4).
\end{align*}
Combining these identities yields
\begin{equation*}
\int_0^1F_L(z)\int_z^1(1-t)\phi^2(t)\,dt\,dz
= -20+18\zeta(4)+\zeta(5).
\end{equation*}
One can now obtain
\begin{equation*}
\Psi(P_3)=52-36\zeta(3)-4\zeta(5).
\end{equation*}

%%%%%%%%%
\smallskip
\subsection{Numerical estimates}
Write $S_k(\ell)$ for the star with $k$ edges rooted at one of the leaves (and not at the center) and $P_k(i)$ for the path rooted at vertex $i\in [k+1]$. For the broom $B_4$ as in Figure~\ref{sfig:B4}, we have
\begin{align*}
    \Psi(B_4)&= 2 \int_0^1 F_{P_3(2)}(z)dz + \int_0^1 F_{L}(z)^2 F_{S_1}(z)dz + \int_0^1 F_{L}(z) F_{P_2(2)}(z)dz + \int_0^1 F_{S_3(\ell)}(z) dz \\
    &= 2 \int_0^1 F_{L}(z) F_{S_1}(z)dz+\int_0^1 F_{L}(z)^2 F_{S_1}(z)dz + \int_0^1 F_{L}(z) F_{P_2(2)}(z)dz + \int_0^1 F_{P_2(2)}(z) dz \\
    &=2 \int_0^1 F_{L}(z) F_{S_1}(z)dz+\int_0^1 F_{L}(z)^2 F_{S_1}(z)dz + \int_0^1 F_{L}(z) F_{P_2(2)}(z)dz + \int_0^1 F_{L}(z)^2 dz
\end{align*}
where
\begin{equation*}
    F_{P_2(2)}(z) = (1-z)\phi(z) F_{L}(z)^2 + \int_z^1 (1-t)\phi'(t) F_{L}(t)^2 dt.
\end{equation*}
These integral computations become increasingly complex, and we are not aware of a simple method for obtaining exact expressions. The accompanying code, available at \url{https://github.com/L-makowiec/mst-tree-psi-computations}, numerically computes $\Psi(T)$ for every tree shown in Figure~\ref{fig:trees_up_to_five_edges} and includes a function for computing $\Psi(T)$ for an arbitrary tree $T$. We expect that exact expressions can also be derived.

\end{document}